\documentclass{amsart}
\usepackage{amsmath}
\usepackage{amsthm}
\usepackage{amssymb}
\usepackage{graphics}
\usepackage{bm}

\usepackage{amsfonts}
\usepackage{latexsym}
\usepackage{mathrsfs}

\usepackage{color}




\newcommand{\be}{\begin{equation}}
\newcommand{\ee}{\end{equation}}
\newcommand{\ba}{\begin{eqnarray}}
\newcommand{\ea}{\end{eqnarray}}
\newcommand{\bi}{\begin{itemize}}
\newcommand{\ei}{\end{itemize}}
\newcommand{\bn}{\begin{enumerate}}
\newcommand{\en}{\end{enumerate}}
\newcommand{\bbm}{\begin{bmatrix}}
\newcommand{\ebm}{\end{bmatrix}}
\newcommand{\bpm}{\begin{pmatrix}}
\newcommand{\epm}{\end{pmatrix}}
\newcommand{\bp}{\begin{proof}}
\newcommand{\ep}{\end{proof}}
\newcommand{\nn}{\nonumber}


\newcommand{\mr}{\ensuremath{\mathrm}}
\newcommand{\scr}{\ensuremath{\mathscr}}
\newcommand{\mbf}{\ensuremath{\mathbf}}
\newcommand{\mc}{\ensuremath{\mathcal}}

\newcommand{\ov}{\ensuremath{\overline}}
\newcommand{\sm}{\ensuremath{\setminus}}
\newcommand{\wt}{\ensuremath{\widetilde}}


\newcommand{\ga}{\ensuremath{\gamma}}
\newcommand{\Om}{\ensuremath{\Omega}}

\newcommand{\la}{\ensuremath{\lambda }}


\def\C{\mathbb{C}}

\def\D{\mathbb{D}}
\def\T{\mathbb{T}}

\def\N{\mathbb{N}}
\def\B{\mathbb{B}}
\def\A{\mathcal{A}}

\renewcommand{\H}{\ensuremath{\mathcal{H} }}
\newcommand{\J}{\ensuremath{\mathcal{J} }}

\newcommand{\K}{\ensuremath{\mathcal{K} }}
\renewcommand{\L}{\ensuremath{\mathcal{L} }}

\newcommand{\F}{\ensuremath{\mathbb{F} }}


\def\kz{K \{ Z , y , v \}}
\def\kw{K \{ W , x , u \}}
\newcommand{\zyv}{\ensuremath{\left\{ Z,y, v \right\} }}
\newcommand{\wxu}{\ensuremath{\left\{ W,x,u \right\} }}

\def\id{\mathrm{id}}


\newcommand{\ip}[2]{\ensuremath{\left\langle {#1} , {#2} \right\rangle}}
\newcommand{\ipcn}[2]{\ensuremath{\left( {#1} , {#2} \right) _{\C ^n}}}
\newcommand{\ipcm}[2]{\ensuremath{\left( {#1} , {#2} \right) _{\C ^m}}}

\newcommand{\dom}[1]{\ensuremath{\mathrm{Dom} ({#1}) }}

\newcommand{\ran}[1]{\ensuremath{\mathrm{Ran} \left( {#1} \right) }}
\renewcommand{\ker}[1]{\ensuremath{\mathrm{Ker} \left( {#1} \right) }}

\newcommand{\im}[1]{\ensuremath{\mathrm{Im} \left( {#1} \right) }}
\newcommand{\re}[1]{\ensuremath{\mathrm{Re} \left( {#1} \right) }}


\numberwithin{equation}{section}
\numberwithin{subsection}{section}

\newtheorem{thm}[subsection]{Theorem}
\newtheorem*{thm*}{Theorem}
\newtheorem{claim}[subsection]{Claim}

\newtheorem{lemma}[subsection]{Lemma}
\newtheorem{prop}[subsection]{Proposition}
\newtheorem{cor}[subsection]{Corollary}

\theoremstyle{definition}
\newtheorem{defn}[subsection]{Definition}
\newtheorem{remark}[subsection]{Remark}


\title{Column extreme multipliers of the Free Hardy space}

\author{Michael T. Jury}
\address{University of Florida}
\email{mjury@ufl.edu}

\author{Robert T.W. Martin}
\address{University of Cape Town}
\email{rtwmartin@gmail.com}

\begin{document}
\small
\date{\today}
\bibliographystyle{plain}
\maketitle

\begin{abstract}
The full Fock space over $\C ^d$ can be identified with the free Hardy space, $H^2 (\B ^d _\N)$ - the unique non-commutative reproducing kernel Hilbert space corresponding to a non-commutative Szeg\"{o} kernel on the non-commutative, multi-variable open unit ball $\B ^d _\N := \bigsqcup _{n=1} ^\infty \left( \C^{n\times n} \otimes \C ^d \right) _1$. 

Elements of this space are free or non-commutative functions on $\B ^d _\N$. Under this identification, the full Fock space is the canonical non-commutative and several-variable analogue of the classical Hardy space of the disk, and many classical function theory results have faithful extensions to this setting.  In particular to each contractive (free) multiplier $B$ of the free Hardy space, we associate a Hilbert space $\mathcal H(B)$ analogous to the deBranges-Rovnyak spaces in the unit disk, and consider the ways in which various properties of the free function $B$ are reflected in the Hilbert space $\mathcal H(B)$ and the operators which act on it. In the classical setting, the $\mathcal H(b)$ spaces of analytic functions on the disk display strikingly different behavior depending on whether or not the function $b$ is an extreme point in the unit ball of $H^\infty(\mathbb D)$. We show that such a dichotomy persists in the free case, where the split depends on whtether or not $B$ is what we call {\em column extreme}. 
\end{abstract}

\section{Introduction}

The classical Hardy space, $H^2 (\D)$, can be defined as the Hilbert space of all analytic functions on $\D$ whose Taylor series at $0$ have square summable coefficients (and with inner product equal to the $\ell ^2$ inner product of these Taylor coefficients).  Equivalently, $H^2 (\D) = \H (k)$, is the unique reproducing kernel Hilbert space (RKHS) of functions on $\D$ corresponding to the positive sesqui-analytic kernel function $k: \D \times \D \rightarrow \C$:
$$ k (z,w):= \frac{1}{1-zw^*}; \quad \quad z,w \in \D,$$ the \emph{Szeg\"{o} kernel}. The operator of multiplication by $z$ on $H^2 (\D)$ is called the \emph{shift}, and it is easily seen to be isomorphic to the unilateral shift on $\ell ^2 (\N _0)$, where $\N _0$ denotes the non-negative integers. Proofs of many deep results in classical Hardy space theory ultimately appeal to the fact that $S$ is the universal cyclic pure isometry (recall the Wold decomposition says that any isometry is unitarily equivalent to a direct sum of shifts and a unitary operator). 

From the viewpoint of reproducing kernel theory and operator theory, the canonical (commutative) multi-variable analogue of the Hardy space is then the Drury-Arveson space, $H^2 _d := \mc{H} (k)$, where now $k: \B ^d \times \B ^d \rightarrow \C$ is:
$$  k (z,w):= \frac{1}{1-zw^*}; \quad \quad z,w \in \B^d,$$ the multi-variable \emph{Szeg\"{o} kernel}, 
and $zw^* := z_1 w_1 ^* + ... z_d w_d ^* = \left( w ,z \right) _{\C ^d }$. (Here, $\B ^d := (\C ^d ) _1$, the multi-variable open unit ball.) The appropriate analogue of the shift in this setting is the Arveson $d-$shift, $S = (S_1, ... ,S_d) : H^2 _d \otimes \C ^d \rightarrow H^2 _d$, $(S_j h) (z) := z_j h(z); \ z=(z_1,..., z_d) \in \B ^d$. This is a (row) partial isometry (from $d$ copies of $H^2 _d$ into itself), but no longer an isometry, and this defect is the source of several differences between the single and several-variable theories. Faithful analogues of classical Hardy space results typically seem to exist, but often new (and often more complicated) proof techniques and approaches are required \cite{Sha2013}. 

An alternative approach to extending Hardy space theory from one to several variables would be to seek analogues of Hardy space results for a several-variable shift.  Namely, the natural multi-variable analogue of $\ell ^2 (\N _0 )$ is $\ell ^2 (\F ^d )$, where $\F ^d$ is the free monoid (unital semi-group) of all words in the $d$ letters $\{ 1, ... , d \}$, and with unit equal to the empty word, $\emptyset$, containing no letters. This monoid can be identified with a simple directed tree starting at single node and with $d$ branches at each node (clearly $\F ^1 \simeq \N _0$). There is a natural $d-$tuple of shifts, $L := (L_1 , ... , L_d )$ on $\ell ^2 (\F ^d)$ which are defined by
$$ L _k e _\alpha := e_{k\alpha}, $$ where $\{ e_\alpha \} _{\alpha \in \F ^d}$ is the canonical orthonormal basis of $\ell ^2 (\F ^d )$. It is easy to see that each $L_k$ is a pure isometry and the $L_k$ have pairwise orthogonal ranges $L_k ^* L_j = \delta _{k,j} I$. In particular, the row
$L = (L_1 ,... ,L _d) : \ell ^2 (\F ^d ) \otimes \C ^d \rightarrow \ell ^2 (\F ^d)$ is an isometry from $d$ copies of $\ell ^2 (\F ^d)$ into itself which we call the \emph{left free shift}. The Popescu-Wold decomposition for row isometries shows that $L$ has the same universal property as the shift $S$: any row isometry (an isometry from $d$ copies of a Hilbert space into itself) is isomorphic to the direct sum of several copies of $L$ and a \emph{row unitary} (an onto row isometry). 

The left free shifts $L_k$ are of course non-commuting, and it would appear that one loses the analytic function theory interpretation of the shift as acting as multiplication by the independent variable on a space of analytic functions. Surprisingly, this is not the case: the fields of non-commutative function theory \cite{KVV,Ag-Mc,Pop-freeholo,Pop-freeholo2,Pop-freehar}, and the recently developed theory of non-commutative reproducing kernel Hilbert spaces (NC-RKHS) \cite{BMV} have shown that $\ell ^2 (\F ^d)$ is canonically isomorphic to the \emph{free Hardy space}, $H^2 (\B ^d _\N)$ of non-commutative or \emph{free holomorphic functions} on a certain \emph{non-commutative multi-variable open unit ball}, $\B ^d _\N$ (we will introduce these objects and this theory in an upcoming subsection).

The Drury-Arveson space $H^2 _d$ can be identified with a subspace of $H^2 (\B ^d _\N)$ which is co-invariant and cyclic for both the left and right free shifts:
$$ H^2 _d \simeq \bigvee _{z \in \B^d} K _z \subseteq H^2 (\B ^d _\N ), $$ the span of all the kernel vectors at level one. This subspace is the orthogonal complement of the range of both a right inner and a left inner free multiplier. For example, if $d=2$, 
$$ H^2 _d \simeq \ran{\frac{1}{\sqrt{2}} (L_1 L_2 - L_2 L_1) } ^\perp, $$ and this shows that the theory of $H^2 _d$ should be closer in analogy to that of the theory of model subspaces of $H^2 (\D)$.  In particular, commutative Drury-Arveson space analogues of all of the results of this paper (and those of \cite{JMfree}) can be easily obtained by compression.

In recent work, we have extended Hardy space results including the concept of Aleksandrov-Clark measure, the theory of Clark's unitary perturbations, and equivalent characterizations of extreme points from one to several variables. In particular, the reference \cite{JMfree} extends the theory of Clark measures and Clark peturbations to the non-commutative setting of the full Fock space over $\C ^d$ (which can be identified with $\ell ^2 (\F ^d )$) using the theory of free formal reproducing kernel Hilbert spaces \cite{Ball2003rkhs}.  

The goal of this paper is to develop non-commuative analogues of our recent results on extreme points of the closed unit ball of the multiplier algebra of Drury-Arveson space \cite{JM,JMqe}.  We will also extend and re-cast the main results of \cite{JMfree} in the modern language of NC-RKHS.  In particular we give a number of equivalent characterizations of so-called {\em column extreme} multipliers of the free Hardy space.

\section{Preliminaries} \label{prelim}
All Hilbert space inner products will be conjugate linear in their first argument. If $X$ is a Banach space, $(X)_1$ and $[X]_1$ denote the open and closed unit balls of $X$, respectively.

\subsection{The full Fock space}

Recall that the full Fock space over $\C ^d$, $F^2 _d$, is the direct sum of all tensor powers of $\C ^d$:
\ba F^2 _d & := & \C \oplus \left( \C ^d \otimes \C ^d \right) \oplus \left( \C ^d \otimes \C ^d \otimes \C ^d \right) \oplus \cdots \nn \\
& =& \bigoplus _{k=0} ^\infty \left( \C ^d \right) ^{k \cdot \otimes }. \nn \ea Fix an orthonormal basis $\{ e_1, ... , e_d \}$ of $\C ^d$. The left creation
operators $L_1, ..., L_d$ are the operators which act as tensoring on the left by these basis vectors:
$$ L_k f := e_k \otimes f; \quad \quad f \in F^2 _d, $$ and similarly the right creation operators $R_k; \ 1\leq k \leq d$ are defined by tensoring on the right $$ R_k f := f \otimes e_k. $$ The left and right free shifts are the row operators $L := (L_1 , ... , L_d)$ and $R := (R_1, ... , R _d )$ which map
$F^2 _d \otimes \C ^d$ into $F^2 _d$. Both $L, R$ are in fact row isometries: $L ^* L = I_{F^2} \otimes I_d = R^* R$. It follows that the component shifts are also isometries with pairwise orthogonal ranges.  The orthogonal complement of the range of $L$ or $R$ is the vacuum vector $1$ which spans the the subspace $\C =: (\C^d) ^{0\cdot \otimes} \subset F^2 _d$. A canonical orthonormal basis for $F^2 _d$ is then $\{ e_\alpha \} _{\alpha \in \F ^d}$ where $e_\alpha = L^\alpha 1 = R^\alpha 1$ and $\F ^d$ is the free unital semigroup or monoid on $d$ letters. Here, if $\alpha = i_1 \cdots i_n \in \F ^d$, we use the standard notation $L^\alpha = L_{i_1} L_{i_2} \cdots L_{i_n}$.

Recall here that the free monoid, $\F^d$, on $d \in \N$ letters, is the multiplicative semigroup of all finite products or \emph{words} in
the $d$ letters $\{1, ... , d \}$. That is, given words $\alpha := i_1 ... i_n$, $\beta := j_1 ... j_m$, $i_k, j_l \in \{1 , ... , d \}; \ 1\leq k \leq n, \ 1 \leq l \leq m$, their product $\alpha \beta $ is defined by concatenation:
$$ \alpha \beta = i_1 ... i_nj_1 ... j_m, $$ and the unit is the empty word, $\emptyset$, containing no letters. Given $\alpha = i_1 \cdots i_n$, we use the standard notation $|\alpha | = n$ for the length of the word $\alpha$. The transpose map $\dag : \F ^d \rightarrow \F ^d$, defined by 
$$  i_1 \cdots i_d =\alpha \mapsto \alpha ^\dag := i_d \cdots i_1,  \quad \quad \mbox{is an involution.} $$ 

Define $L^\infty _d := \mr{Alg}(I,L) ^{-WOT}, \ R^\infty _d := \mr{Alg}(I,R) ^{-WOT}$, the left (\emph{resp.} right) free analytic Toeplitz algebra ($WOT$ denotes weak operator topology).  The \emph{transpose unitary}, $U _{\dag} : F^2 _d \rightarrow F^2 _d$, defined by $e_{\alpha } \mapsto e_{\alpha ^{\dag}}$ is a unitary involution of $F^2 _d$, and it is easy to verify that
$$ U_\dag L_k U_\dag^* = R_k, $$ so that adjunction by $U_\dag$ implements a unitary isomorphism between $L^\infty _d$ and $R^\infty _d$.

\subsection{The free Hardy space}

It will be convenient to view $F^2 _d$ as a non-commutative reproducing kernel Hilbert space (NC-RKHS) \cite{BMV} of freely non-commutative (holomorphic) functions on the non-commutative open unit ball \cite{KVV}:
$$ \B _\N ^d := \bigsqcup _{n=1} ^\infty \B ^d_n ; \quad \quad \B ^d_n   := \left( \C ^{n \times n} \otimes \C ^d \right) _1. $$ Elements of $\B ^d_n $ are viewed as strict row contractions on $\C ^n$.
Recall that for any complex vector space $V$,
$$ V_{nc} := \bigsqcup V_n; \quad \quad V_n := V \otimes \C ^{n\times n} =: V^{n \times n}. $$
The NC unit ball $\B ^d _\N$ is an example of a NC set: A set $\Om \subseteq V _{nc}$ is an NC set if it is closed under direct sums, and one writes:
$$ \Om =: \bigsqcup \Om _n; \quad \quad \Om _n := \Om \cap V_n. $$ A function $f : \Om \rightarrow \C _{nc}$ is called a NC or \emph{free function} if: 
$$ f : \Om _n \rightarrow \C ^{n\times n}; \quad \quad f \ \mbox{respects the grading}, $$ and
if $X \in \C ^{n\times m }, Z \in X_n , W \in X _m$ obey $ZX = XW$, then, 
$$ f(Z) X = X f(W); \quad \quad f \ \mbox{respects intertwinings.}$$

As shown in \cite{BMV}, $F^2 _d =  H^2 (\B _\N ^d )$ can be viewed as the \emph{free Hardy space} of the multi-variable NC unit ball $\B _\N ^d$, \emph{i.e.}
$H^2 (\B _\N ^d) = \H _{nc} (K)$ is the unique NC-RKHS corresponding to the NC-Szeg\"{o} kernel:
$K : \B _\N ^d \times \B _\N ^d \rightarrow  \L (\C _{nc} )$ defined by:
$$ K (Z, W)  [P ] := \sum _{\alpha \in \F ^d} Z^\alpha P (W^* ) ^{\alpha ^\dag}; \quad \quad Z \in \B ^d_n  , W \in \B ^d_m , P \in \C ^{n\times m}.$$
See \cite{BMV} for the full definition and theory of NC kernels. In particular, any NC kernel respects the grading and intertwinings in both arguments \cite[Section 2.3]{BMV}.

One can show that elements of $H^2 (\B _\N ^d ) := \H _{nc} (K)$ are locally bounded (and hence automatically) holomorphic free functions on $\B ^d _\N$ \cite[Chapter 7]{KVV}.  That is, any $f\in H^2 (\B _\N ^d )$ is Fr\'{e}chet and  G\^{a}teaux differentiable at any point $Z \in \B ^d _\N$ and $f$ has a convergent power series expansion (Taylor-Taylor series) about any point. 

(Generally any) $\H _{nc} (K)$ is formally defined as the Hilbert space completion of the linear span:
$$ \bigvee _{Z \in \B ^d _n, \ y,v \in \C ^n } \kz, $$ where the $\kw$ are the free functions on $\B ^d _\N$, $ \kw : \B ^d _n \rightarrow \C ^{n\times n}$, defined by:
$$ \kw (Z) y := K(Z,W) [yu^*] x; \quad \quad W \in \B ^d _m, \ Z \in \B ^d _n; \ u,x \in \C ^m, \ y \in \C ^n.$$ Completion is with respect to the inner product:
\begin{align*} & \ip{\kz}{\kw}  :=   \ipcn{y}{K(Z,W)[vu^*]x}; \\
& Z \in \B ^d_n , \ v , y \in \C ^n; \ W \in \B ^d _m, \ u,x \in \C ^m. \end{align*}

These point evaluation vectors have a familiar reproducing property: $K(Z,y,v)$ is the unique vector in $H^2 (\B _\N ^d )$ such that for any $f \in H^2 (\B _\N ^d )$,
\begin{equation}\label{eqn:kz-reproducing-formula}
\ip{\kz }{f}  = \ipcn{y}{f(Z)v }. 
\end{equation}

For any $Z \in \B ^d_n$ one can also define a natural \emph{kernel map} $K_Z \in \L (\C ^{n \times n} , H^2 (\B ^d _\N) )$ as follows: Any $A \in \C ^{n \times n}$
can be written as a linear combination of the rank one outer products
$$ y   v^*  = \bbm y_1\\ \vdots \\ y_n  \ebm \bbm \overline{v_1}, \ & \cdots \ & , \overline{v_n} \ebm ; \quad \quad y \in \C ^n, v^* \in (\C^n)^*. $$
Then we define $K_Z$ on rank one matrices $yv^*$ by the formula
\begin{equation}\label{eqn:K-bigZ-def}
 K_Z ( yv ^*) := \kz \in H^2 (\B _\N ^d ). 
\end{equation}
 Let us check that $K_Z$ is well defined: the vectors $y$ and $v$ determining a rank one matrix $yv^*$ are unique up to the scaling $y\to \lambda y, v\to \overline{\lambda}^{-1}v$ where $\lambda$ is any nonzero complex number. From the reproducing formula (\ref{eqn:kz-reproducing-formula}), it is evident that the vector $K\{Z,y,v\}$ is invariant under such a scaling, and so the formula (\ref{eqn:K-bigZ-def}) is unambiguous.  If we view $\C ^{n \times n}$ as a Hilbert space equipped with the normalized trace inner product, then $K_Z : \C ^{n \times n } \rightarrow H^2 (\B ^d _\N )$ extends to a bounded linear map, and its Hilbert space adjoint is the point evaluation map at $Z$:
$$ K_Z ^* F = F(Z) \in \C ^{n \times n}. $$ 

The free Hardy space and the full Fock space are canonically isomorphic: Define $\scr{U} : F^2 _d \rightarrow  H^2 (\B ^d _\N ) $ by:
\begin{align*} x & :=  \sum _{\alpha \in \F ^d } x_\alpha L^\alpha 1 \stackrel{\scr{U}}{\mapsto} f_x \in H^2 (\B _\N ^d ), \\
 f_x (Z) &:=  \sum _{\alpha \in \F ^d} Z^\alpha x_\alpha; \quad \quad Z \in \B _\N ^d. \end{align*}
The inverse, $\scr{U} ^{-1}$, acts on kernel vectors as:
 \be \kz \stackrel{\scr{U} ^{-1} }{\mapsto} x[Z,y,v] := \sum _{\alpha \in \F ^d} \ip{Z^\alpha v}{y} L^\alpha 1 \in F ^2 _d. \label{pevalimg}\ee

\subsection{Left and Right free multipliers}
\label{leftrightmult}
As in the classical setting, given a NC-RKHS $\H _{nc} (K )$ on an NC set $\Om$ (\emph{e.g.} $\B ^d _\N$), it is natural to consider the left and right \emph{multiplier algebras} $$ \mr{Mult} ^L (\H _{nc} (K) ), \ \mr{Mult} ^R (\H _{nc} (K) )$$ of NC functions on $\Om$ which left or (\emph{resp.}) right multiply $\H _{nc} (K)$ into itself. 
Namely, a free function $F$ on $\B _\N ^d$ is said to be a \emph{left free multiplier} if, for any $f \in H^2 (\B _\N ^d )$, $Ff \in H^2 (\B ^d _\N)$. Similarly, $G$ is called a \emph{right free multiplier}
if $fG \in H^2 (\B _\N ^d )$ for all $f \in H^2 (\B _\N ^d )$. As in the classical setting, the left and right \emph{free multiplier algebras}, $H^\infty _L (\B _\N ^d ) := \mr{Mult} ^L (H^2 (\B ^d _\N ) ), H^\infty _R (\B _\N ^d )$ are weak operator toplogy (WOT)-closed unital operator algebras. Moreover, adjunction by the canonical unitary $\scr{U}$ defines a unitary $*-$isomorphism of the left and right free analytic Toeplitz algebras $L^\infty _d , R^\infty _d$ onto these
left and right free multiplier algebras of $H^2 (\B _\N ^d )$.  As in the classical setting of $H^\infty (\D)$, the multiplier norm of any $F \in H^\infty _L (\B _\N ^d )$ can be computed as the supremum norm on the NC unit ball:
$$ \| F \| := \sup _{Z \in \B _\N ^d} \| F (Z ) \|. $$ The left and right Schur classes, $\scr{L} _d , \scr{R} _d$ are then defined as the closed unit balls of these left and right multiplier algebras (equivalently as the closed unit balls of $L^\infty _d , R^\infty _d$). 

Observe that if $F$ is a left free multiplier then,
\ba \ip{\kz}{Ff} & = & \ip{y}{F(Z) f(Z) v} \nn \\
& = & \ip{K \{ Z, F(Z) ^* y, v \} }{f}, \nn \ea so that
\be (M^L _F)^* \kz = K \{ Z,F(Z) ^* y,v \}, \label{leftpevalue} \ee and similarly, if $G$ is a right free multiplier,
\be (M^R _G)^* \kz = K \{Z ,y,G(Z) v \}. \label{rightpevalue} \ee Alternatively, using the kernel maps $K_Z$, we can write:
$$ (M_F ^L) ^* K_Z (yv) = K_W (F(Z)^*yv^*), \quad \quad \mbox{and} \quad \quad (M_G^R)^* K_Z = K_Z  (y v^*G(Z)^*). $$
One can check that if, \emph{e.g.}, right multiplication by $G(Z)$ is a right free multiplier then
$$(   (M_G^R)^* K_Z)^* ((M_G^R)^* K_W ) = K(Z,W)[G(Z) \cdot G(W) ^*]. $$

In particular, free holomorphic $F(Z),G (Z)$ belong to the left or right Schur classes if and only if
$$ K^F (Z,W)[\cdot]:= K(Z,W) - F(Z) K(Z,W)[\cdot] F(W) ^*  $$ or
$$ K^G (Z,W) [\cdot]:= K(Z,W) - K(Z,W)[G(Z) [\cdot] G(W) ^* ]  $$ are CPNC kernels, respectively. These NC kernels are called the left or right free deBranges-Rovnyak kernels of $F, G$ (\emph{resp.}) and in this case the corresponding NC-RKHS $\mc{H} _{nc} (K^F) =: \scr{H} ^L (F)$, $\mc{H} _{nc} (K^G) =: \scr{H} ^R (G)$ are the left and right free deBranges-Rovnyak spaces of $F,G$.

\subsection{Left vs. Right}

Any element $F \in L^\infty _d$ can be identified with the left free Fourier series:
$$ F \sim F(L) := \sum _{\alpha \in \F ^d}  F_\alpha L^\alpha ; \quad \quad F_\alpha := \ip{L^\alpha 1}{F 1}. $$
That is, $F$ is identified with its \emph{symbol}:
$$ f:= F1 = \sum _{\alpha \in \F ^d}  F_\alpha L^\alpha 1 \in F^2 _d, $$
and we say that $F(L) = M^L _{f}$ acts as left multiplication by $f=F1$. In general the free Fourier series does not converge in SOT or WOT, but the Ces\`aro
sums converge in the strong operator toplogy (SOT) to $F$ \cite{DPac}.

Similarly, in the operator valued setting, any $F \in L^\infty _d \otimes \L (\H , \J)$ is written $F=F(L)= M^L _{f}$, where the symbol, $f \in F^2 _d \otimes \L (\H, \J)$ is defined by 
$$ f:= F (1 \otimes I_\H) = \sum _\alpha L^\alpha 1 \otimes F_\alpha ; \quad \quad F_\alpha \in \L (\H, \J). $$ In this case the operator-valued free holomorphic function $F(Z)$ takes values in $(\C ) _{nc} \otimes \L (\H , \J)$. 

We can also identify any $G \in R^\infty _d$ with its symbol:
$$ g := G1 = \sum _{\alpha \in \F ^d} G_\alpha L^\alpha 1, $$ then we can view $G$ as right multiplication by $g(Z)$,
$$ G = M^R _{g(Z)}. $$ Alternatively, we can write
$$ g = \sum _{\alpha \in \F ^d} G_{\alpha ^\dag} R^\alpha 1, $$ so that
$$ G = M^R _{g(Z)} = g^\dag (R), \quad  \mbox{where} \quad g^\dag (Z) := \sum _{\alpha \in \F ^d} G_{\alpha ^\dag} Z^\alpha. $$
That is, if $G \in R^\infty _d$ acts as right multiplication by the free NC holomorphic function $G (Z)$, then
$ M^R _{G(Z)} $ is identified with the right free Fourier series $G^\dag (R)$ (whose Ces\`aro sums converge SOT to $G$).

\begin{remark} \label{rightproduct} In the right operator-valued setting, suppose that $G (R) := g  (R) \otimes X \in R^\infty _d \otimes \L (\H , \J)$ and $F :=f  (R) \otimes Y \in R^\infty _d \otimes \L (\J , \K )$ with $f,g \in R^\infty _d$, $X \in \L (\H, \J)$, and $Y \in \L (\J , \K)$. If $H = FG$, then observe that 
$$ H ^\dag (Z) = g ^\dag (Z) f ^\dag (Z) \otimes YX. $$ This extends to a `right product' for arbitrary operator-valued free holomorphic functions on $\B ^d _\N$,  $H (Z) = F(Z) \bullet _R G(Z)$. In the scalar-valued setting this simply reduces to $F(Z) \bullet _R G(Z) = G(Z) F(Z)$. 
\end{remark}

\subsection{Operator-valued free multipliers}
\label{freedBRspace}

It will also be convenient to consider operator-valued (left and right) free multipliers between vector-valued free Hardy spaces. Namely, if $\H$ is an auxiliary Hilbert space, one can consider the NC-RKHS $H^2 (\B ^d _N ) \otimes \H$ of $\H$-valued NC functions on $\B ^d _\N$. This NC-RKHS has the operator-valued CPNC kernel:
$$ K(Z,W) \otimes I_\H, $$ and is spanned by the elements 
$$ \kz h := \kz \otimes h, \quad \quad h \in \H,$$ with inner product defined by 
$$ \ip{\kz h}{\kw g}:= \ipcn{y}{K(Z,W)[vu^*]x} \cdot \ip{h}{g}_\H, $$ for $h,g \in \H$, $Z \in \B ^d _n, W \in \B ^d _m$, $v , y \in \C ^n$ and $u, x \in \C ^m$. 
We will write $H^\infty _L (\B ^d _\N) \otimes \L (\H , \J)$ in place of $\mr{Mult} ^L (H^2 (\B ^d _\N ) \otimes \H , H^2 (\B ^d _\N ) \otimes \J )$, the $WOT-$closed \emph{left multiplier space} between these vector-valued free Hardy spaces. (That is, we write $H^\infty _L (\B ^d _\N) \otimes \L (\H , \J)$ in place of the weak operator topology closure of this algebraic tensor product). The operator-valued Schur classes, $\scr{L} _d (\H , \J), \scr{R} _d (\H ,\J)$ are then the closed unit balls of these operator-valued left and right (\emph{resp}.) multiplier spaces.

If $A \in \scr{L} _d (\H , \J)$, consider the operator-valued CPNC kernel:
$$ K ^{A} (Z, W) [\cdot]= K(Z,W)[\cdot] \otimes I _\J - A(Z) (K(Z,W)[\cdot] \otimes I_\H) A(W) ^*. $$ Here, $K$ is the free Szeg\"{o} kernel. This is the left free deBranges-Rovnyak CPNC kernel of $A$, and the corresponding NC-RKHS, $\H _{nc} (K^A) =: \scr{H} ^L (A)$ is called the left free deBranges-Rovnyak space of $A$.
Namely, $\scr{H} ^L (A)$ is the closed linear span of vectors of the form
$K^A \{ Z, y, v \} g$ whose inner product is defined by:
\ba && \ip{K^A \{ Z,y,v \} g }{K^A \{ W,x,u \} f} _{\scr{H} ^L (A)}  \nn \\
&:= & \left( y \otimes g , \left(K(Z,W)[v u^*] \otimes I_\J - A(Z) (K(Z,W) [v u^*] \otimes I_\H ) A(W)^* \right) x \otimes f \right)_{\C ^n \otimes \J}. \nn \ea
In this vector-valued setting, for $Z \in \B ^d _n$ we write $K^A _Z : \C ^{n\times n} \otimes \J \rightarrow \scr{H} ^L (A)$ for the kernel map $K^A _Z (yv^* \otimes h) = K^A \{ Z,y,v \} h$. 

On the other hand, if $B \in \scr{R} _d (\H, \J)$, then the right free deBranges-Ronvyak space $\scr{H} ^R (B)$ is spanned by the vectors 
$K^B \{ Z , v , y \} g$ with inner product: 
\ba & &  \ip{K^B \{ Z,y,v \} g }{K^B \{ W,x,u \} f} _{\scr{H} ^R (B)} \nn \\
&:=& \left( y \otimes g, K^B (Z,W) [v u^* \otimes I_\H ]   x \otimes f \right)_{\C ^n \otimes \J} \nn \\
K^B (Z,W) & = &K(Z,W) \otimes I _\J  
- (K(Z,W) \otimes I _\J) [B^\dag (Z) ( \cdot \otimes I_\H ) B^\dag (W)^* ]. \nn \ea 
It is not difficult to see that free operator-valued holomorphic functions $A,B$ on $\B ^d _\N$ belong to the left or right free Schur classes if and only if the above NC deBranges-Rovnyak kernels are (completely) positive.

\begin{remark}{ (Right Product)}
    If $F \in \mr{Mult} ^R (K_1 ,K_2) \otimes \L (\H , \J)$ is a right operator-valued multiplier between vector-valued NC-RKHS on (say) the open unit NC ball $\B ^d _\N$, then one can easily verify that for any $g \in \J$,
    $$ (M^R _F) ^* K_2 \zyv g= K_1 \{ Z , y , F(Z) \bullet _R v  \} g, $$ where for any $f \in \H _{nc} (K_1)$ we define:
$$ \ip{K_1 \{ Z , y , F(Z) \bullet _R v \} g}{f}  =  \ip{y \otimes h}{ F(Z) \bullet _R f(Z) v}. $$
Also, in the above, given any element $f$ of a $\H-$valued NC-RKHS $\H _{nc} (K)$, note that $f(Z) \in \C ^{n\times n} \otimes \H$, so that $f(Z) v$ is to be interpreted as an element of $\C ^n \otimes \H$. 
\end{remark}

\subsection{Coefficient evaluation and free formal RKHS}

Let $K$ be an operator-valued CPNC kernel on $\B ^d _\N$ whose Taylor-Taylor series about $0 \in \B ^d _1 = \B ^d$ converges absolutely on $\B ^d _\N$, and uniformly on compacta:
$$ K(Z,W) [P] = \sum _{\alpha , \beta \in \F ^d} K_{\alpha , \beta} Z^\alpha [P] (W^*) ^{\beta ^\dag}; \quad \quad K_{\alpha , \beta} \in \L (\H ). $$ Any right or left (operator-valued) deBranges-Rovnyak kernel has this property, for example. The \emph{coefficient kernel} function $K _{(\cdot , \cdot) } : \F ^d \times \F ^d \rightarrow \L (\H)$ is then an operator-valued \emph{free formal kernel} in the sense of \cite{Ball2003rkhs,BMV} (see also \cite{JMfree} which develops free Aleksandrov-Clark theory using the free formal RKHS setup).

If $F(Z):= \sum _\alpha F_\alpha Z^\alpha \in \H _{nc} (K)$, then for any $\alpha \in \F ^d$, the linear $\H-$valued map defined by coefficient evaluation:
$$ K _\alpha ^* (F) := F_\alpha \in  \H, $$ is bounded.  The Hilbert space adjoint $K_\alpha := (K _\alpha ^* ) ^* : \H \rightarrow \H _{nc} (K)$ will be called the coefficient kernel map, and one always has 
$$ K _{\alpha , \beta} = K_\alpha ^* K_\beta \in \L (\H ), $$ and
$$ K_\beta (Z) = \sum _{\alpha \in \F ^d} K_{\alpha ,\beta} Z^\alpha. $$ Observe that $K_{\alpha,\beta}$ is a positive kernel function in the classical sense on the discrete set $\F ^d$.

\subsection{The free Herglotz-Schur classes}

\begin{defn}
 The left free Herglotz-Schur class, $\scr{L} _d ^+$, is the set of all accretive matrix-valued free NC functions on $\B ^d _\N$.
\end{defn}

If $H \in \scr{L} _d ^+$, then $H(Z)$ is an accretive matrix (positive semi-definite real part), and it follows that
$$ B_H (Z) : =  (H(Z) -I)(H(Z) +I ) ^{-1} , $$ is a contractive free function on $\B ^d _\N$, \emph{i.e.}, $B \in \scr{L} _d$. Conversely, given such a $B$,
$$ H_B (Z) := (I+ B(Z) ) ( I - B(Z) ) ^{-1}, $$ has non-negative real part. That is, this fractional linear transformation, the \emph{Cayley Transform}, defines a bijection between $\scr{L} _d$ and $\scr{L} _d ^+$.
The right free Herglotz-Schur class, $\scr{R} _d ^+$ is the image of $\scr{L} _d ^+$ under the transpose map, $\dag$, and we similarly define the operator-valued free Herglotz-Schur classes $\scr{L} _d ^+ (\H) = \scr{L} _d ^+ (\H , \H )$ as the image of $\scr{L} _d (\H)$ under Cayley transform.      

\begin{remark} \label{HergSmir}
By \cite[Lemma 3.2, Lemma 3.3]{JM-freeSmirnov}, if $B \in \scr{R} _d$ or $\scr{L} _d$, then $1-B$ is outer (and necessarily invertible on $\B ^d _\N$). Moreover, the free Herglotz-Schur classes are contained in the free Smirnov classes, which can be identified with closed, densely-defined (generally unbounded) right and left multipliers of $H^2 (\B ^d _\N )$ \cite{JM-freeSmirnov}. In particular, by \cite[Corollary 3.13, Corollary 3.15]{JM-freeSmirnov}, if $B \in \scr{L} _d$ or $A \in \scr{R} _d$, then the free polynomials belong to the domains of and are cores for both $H_B (L) ^*$ and $H_A (R) ^*$.
\end{remark}

\section{Free Aleksandrov-Clark measures}

Let $\A _d := \left( \bigvee _{\alpha \in \F ^d} L^\alpha \right) ^{-\| \cdot \|}$, the free disk algebra.
In the case where $d=1$, $L=S$ (the shift), and we recover the classical disk algebra $\A _1 = \A (\D )$ of bounded analytic functions on $\D$ with continous extensions to the unit circle $\T=\partial \D$.

Recall the Herglotz representation formula for Herglotz functions (analytic functions with non-negative real part) on the disk: If $H$ is a Herglotz function, then there is a unique finite positive Borel measure $\mu$ on $\T$ so that:
$$ H(z) = i \im{H(0)} + \int _\T \frac{1+z\ov{\zeta}}{1-z\ov{\zeta}} \mu (d\zeta ). $$ 
As discussed above, any such $H \in \scr{S} ^+ := \scr{L} _1 ^+$ is the Cayley transform of some contractive analytic Schur class function $b \in \scr{S} := [H^\infty (\D ) ]_1 = \scr{L} _1$, $H = H_b = \frac{1+b}{1-b},$ and the measure $\mu =: \mu _b$ is called the Aleksandrov-Clark measure of $b$. This defines bijections (modulo imaginary constants) between $\scr{S}, \scr{S} ^+$, and the set of all finite positive Borel measures on $\T$. 

Any finite positive Borel measure $\mu$ on $\T$ can be identified with a positive linear functional on the disk algebra operator system: $\A _1 + \A _1 ^*$: 
$$ \hat{\mu} (S^n) := \int _\T \zeta ^n \mu (d\zeta ). $$ 
The theory of closed, densely-defined operators affiliated to the shift \cite{Suarez,Sarason-ub}, implies that if $b \in \scr{S}$, then multiplication by $H_b \in \scr{S} ^+$ is a closed, densely-defined (and accretive) operator on $H^2 (\D)$, and that $\bigvee S^n 1$ is a core for $M_{H_b} ^*$. It is then easy to verify the following formula for $\hat{\mu }_b$:
\be \hat{\mu _b} (S^n )  =  \frac{1}{2} \left(\ip{H_b (S) ^* 1}{S^n 1} _{H^2} + \ip{1}{H_b(S)^* S^n 1} _{H^2} \right), \label{Clarkform} \ee where $m$ denotes normalized Lebesgue measure on $\T$. Equivalently, using that any kernel vector $k_z$, for $z \in \D$, is necessarily an eigenvector for $H_b (S) ^*$ with eigenvalue $H_b (z) ^*$, one can check that
\be H_b (z)   =  i\im{H_b (0)} + \hat{\mu} _b \left( (I+zS^*)(I-zS^*) ^{-1} \right). \ee
This Clark functional formula (\ref{Clarkform}) extends verbatim to the non-commutative several-variable setting: 
\begin{defn}
Let $(A,B) \in \scr{R} _d \times \scr{L} _d$ be a transpose-conjugate ($A = B^\dag$) free Schur class pair. The \emph{Clark functional} of $(A , B)$ is the self-adjoint linear functional: $\mu _A = \mu _B : \bigvee L^\alpha 1 + \bigvee (L^\alpha ) ^* \rightarrow \C$
defined by:
\ba  \mu _B (L^\alpha ) & := & \frac{1}{2} \left( \ip{H_A (R) ^* 1 }{L^\alpha 1 } _{F^2 _d} + \ip{1}{H_A (R) ^* L^\alpha 1 } _{F^2 _d} \right) \nn \\
& = & \frac{1}{2} \left( H_B (0) \delta _{\alpha , \emptyset} +  \ip{1}{H_A (R) ^* L^\alpha 1 } _{F^2 _d} \right). \nn \ea
\end{defn}
In the above, recall that the free monomials always belong to the domain of $H_A (R) ^*$, see Remark \ref{HergSmir}. To simplify notation, we will write $\A _d + \A _d ^*$ in place of its norm closure: $\A _d + \A _d ^* = (\A _d + \A _d ^* ) ^{-\| \cdot \| }$.

\begin{lemma}
$\mu _B$ extends by continuity to a positive, bounded linear functional on the norm-closed operator system $\A _d + \A _d ^*$.
\end{lemma}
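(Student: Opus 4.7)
The plan is to prove positivity of $\mu_B$ on the algebraic operator system $\mc{P} := \bigvee _{\alpha \in \F ^d} L^\alpha + \bigvee _{\alpha \in \F ^d} (L^\alpha)^*$; the rest then follows from standard operator-system arguments. Indeed $\mu_B$ is self-adjoint by construction and $\mu_B(I) = \mathrm{Re}\, H_B(0) < \infty$. Once positivity on $\mc{P}$ is in hand, the sandwich $-\| X \| I \leq X \leq \| X \| I$ for self-adjoint $X$ gives $|\mu_B (X) | \leq \mu_B(I) \| X \|$, and a real/imaginary part decomposition extends this to $|\mu_B(X)| \leq 2\mu_B(I) \|X\|$ for all $X \in \mc{P}$. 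Hence $\mu_B$ extends uniquely by continuity to a bounded linear functional on the norm closure $\A _d + \A _d ^*$, and the extension remains positive because the positive cone is norm-closed.

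First I would simplify the defining formula. By Remark \ref{HergSmir} the free polynomials are a core for both $H_A (R)$ and $H_A (R)^*$, so the adjoint pairing $\ip{1}{H_A (R) ^* \eta} = \ip{H_A (R)\, 1}{\eta}$ is available for any polynomial $\eta \in F^2 _d$. Applying this with $\eta = p(L) 1$ and combining the two halves of the definition yields the compact form
\[ \mu_B (p(L)) = \ip{v_A}{\, p(L)\, 1\,} _{F^2 _d}, \qquad v_A := \tfrac{1}{2} \bigl( H_A (R) + H_A (R) ^* \bigr)\, 1 \in F^2 _d, \]
for $p(L) \in \bigvee L^\alpha$, and $\mu_B (p(L)^*) = \overline{\mu_B (p(L))}$ by self-adjointness; together these determine $\mu_B$ on all of $\mc{P}$.

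The substantive step is to realize $\mu_B$ as a vector state on a row-unitary dilation. Because $A \in \scr{R} _d$ makes $A(R)$ a row contraction on $F^2 _d$, Popescu-style dilation produces a Hilbert space $\K \supseteq F^2 _d$, a row unitary $U = (U_1, \ldots, U_d)$ on $\K$, and a vector $v_B \in \K$ for which
\[ \mu_B (L^\alpha) = \ip{v_B}{\, U^\alpha v_B \,} _\K , \qquad \alpha \in \F ^d. \]
Equivalently, the assignment $L^\alpha \mapsto U^\alpha$ extends to a unital completely positive map $\Phi : \A _d + \A _d ^* \to C^* (U)$ with $\mu_B = \ip{v_B}{\Phi(\cdot) v_B} _\K$. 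Positivity on $\mc{P}$ is then immediate: if $X \geq 0$ in $\mc{P}$, then $\Phi(X) \geq 0$, so $\mu_B(X) = \ip{v_B}{\Phi(X) v_B} \geq 0$.

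The main obstacle is constructing the dilation, the free analogue of Naimark's theorem for Clark measures. The cleanest path is to import it from \cite{JMfree}, where precisely such Clark unitary perturbations of $L$ are built and shown to represent $\mu_B$ as a vacuum-vector state. Alternatively, one can build it directly: the $m$-accretivity of $H_A (R)$, together with the bounded resolvent $(I + H_A (R))^{-1} = \tfrac{1}{2}(I - A(R))$, provides the operator-theoretic positivity required for a Naimark-type dilation, after which the Cayley transform recovers $\mu_B$ as a vacuum vector state on the resulting row-unitary system.
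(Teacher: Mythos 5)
Your overall skeleton (prove positivity on the algebraic operator system, deduce boundedness from $\mu_B(I)=\re{H_B(0)}<\infty$, extend by continuity, note the positive cone is norm-closed) is exactly the paper's, but both of your substantive steps have genuine gaps. First, the ``compact form'' $\mu_B(p(L)) = \ip{v_A}{p(L)1}$ with $v_A := \tfrac{1}{2}\left(H_A(R) + H_A(R)^*\right)1$ is ill-defined: Remark \ref{HergSmir} places the free polynomials in the domain of the \emph{adjoint} $H_A(R)^*$ only, not of $H_A(R)$ itself, and in general $1 \notin \dom{H_A(R)}$. Already for $d=1$ and $b(z)=z$ one has $H_b(z)=(1+z)/(1-z)=1+2\sum z^n \notin H^2(\D)$, so $H_b(S)1$ does not exist; the pairing $\ip{1}{H_A(R)^*\eta}=\ip{H_A(R)1}{\eta}$ you invoke is therefore unavailable. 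The paper's definition deliberately applies only $H_A(R)^*$ to polynomials for exactly this reason.

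Second, and more seriously, the positivity step --- which is the actual content of the lemma --- is deferred to a row-unitary dilation $(\K, U, v_B)$ with $\mu_B(L^\alpha)=\ip{v_B}{U^\alpha v_B}$ that you never construct. The natural source of such a dilation in this circle of ideas (the S-GNS space $F^2(\mu_B)$ with its row isometry $\Pi^B$, and the Clark perturbations of \cite{JMfree}) is built \emph{from} the positive functional $\mu_B$, so importing it here is circular; and the sketched alternative via ``$m$-accretivity plus a Naimark-type dilation'' is an assertion, not an argument. What actually drives positivity is that the coefficient kernel $K^R_{\alpha,\beta}=\mu_B\left((L^\alpha)^*L^\beta\right)$ is the free Herglotz kernel of $H_B$, whose complete positivity follows from $B$ lying in the Schur class (equivalently from positivity of the deBranges--Rovnyak kernel, i.e.\ $\re{H_B}\geq 0$); positivity of $\mu_B$ on the operator system is then extracted from this via the semi-Dirichlet property $(\A_d)^*\A_d \subseteq \A_d+\A_d^*$. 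The paper compresses this into ``it is easy to check,'' but some version of it must be supplied; your proposal instead appeals to the very object (a dilation, i.e.\ a GNS-type representation) that positivity is needed to produce.
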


\begin{proof}
That $L^\alpha 1$ belongs to the domain of $H_A (R) ^*$ (and that, in fact, free polynomials are a core for $H_A (R) ^*$) follows from \cite[Corollary 3.9, Corollary 3.10, Remark 3.12]{JM-freeSmirnov}. It is easy to check that $\mu _B$ is positive on $\bigvee _\alpha L^\alpha  + \bigvee _\alpha (L^\alpha ) ^*$. Since this is a positive linear functional, its norm is given by
$$ \| \mu _B \| = \mu _B (I) = \re{ (H_B ) _\emptyset } = \re{H _B (0)} < \infty. $$ Hence, $\mu _B$ extends by continuity to a bounded positive linear functional on the free disk operator system.
\end{proof}

\begin{thm} \label{Clarkfunthm}
The map $B \mapsto \mu _B$ is a bijection from $\scr{L} _d$ onto the set of positive linear functionals on the free disk system,
and one has the \emph{free Herglotz formula}: For any $Z \in \B ^d _n$,
$$ H_B (Z) = i \im{H_B (0_n)} + (\id _n \otimes \mu _B ) \left( (I_{n \times F^2} + ZL^* ) (I_{n \times F^2} -ZL^* ) ^{-1}) \right). $$
\end{thm}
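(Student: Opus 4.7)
The plan is to prove the Herglotz formula first (which accounts for most of the work), from which injectivity is immediate, and then to handle surjectivity via an inverse construction.

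For the Herglotz formula, the key identity is the generalized eigenvector relation
\[
H_A(R)^* K\zyv = K\{Z,y,H_B(Z)v\}, \qquad Z\in\B^d_n,\ y,v\in\C^n,
\]
obtained by factoring $H_A(R)=(I+A(R))(I-A(R))^{-1}$ as a Cayley transform of the bounded contraction $A(R)\in R^\infty_d$ and applying the bounded kernel-vector formula $A(R)^* K\zyv = K\{Z,y,B(Z)v\}$ of Subsection~\ref{leftrightmult}, using that $I-B(Z)$ is invertible on $\C^n$ because $B(Z)$ is a strict row contraction. Specialising to $Z=0\in\B^d_1$ yields $H_A(R)^*\cdot 1=\overline{H_B(0)}\cdot 1$, confirming the simplification of the first summand in the definition of $\mu_B(L^\alpha)$. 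Because $\|ZL^*\|=\|Z\|<1$, the Neumann expansion $(I-ZL^*)^{-1}=\sum_{\alpha\in\F^d}Z^\alpha\otimes(L^{\alpha^\dag})^*$ converges in the norm of $\C^{n\times n}\otimes(\A_d+\A_d^*)$, so $\id_n\otimes\mu_B$ may be applied term-by-term; combined with the kernel-vector identity, the definition of $\mu_B$, and careful book-keeping of the involution $\dag$ (which enters once from $(L^*)^n=(L^{\alpha^\dag})^*$ and once from the identification $H_A(R)=H_B^\dag(R)$), the resulting series collapses into $H_B(Z)-i\,\im{H_B(0_n)}$. Injectivity of $B\mapsto\mu_B$ is then immediate: the Herglotz formula recovers $H_B$ on every level $\B^d_n$ from $\mu_B$ alone, and $B$ is recovered as the inverse Cayley transform $B=(H_B-I)(H_B+I)^{-1}$.

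For surjectivity, given a positive $\mu\in(\A_d+\A_d^*)^*$ I would define $H_\mu(Z):=(\id_n\otimes\mu)\bigl((I+ZL^*)(I-ZL^*)^{-1}\bigr)$ on $\B^d_n$ and verify $\re{H_\mu(Z)}\ge 0$ via the operator identity
\[
(I+ZL^*)(I-ZL^*)^{-1} + \bigl((I+ZL^*)(I-ZL^*)^{-1}\bigr)^* = 2(I-LZ^*)^{-1}(I-LZ^*ZL^*)(I-ZL^*)^{-1},
\]
whose right-hand side is positive in $\L(\C^n\otimes F^2_d)$ because $\|LZ^*ZL^*\|\le\|Z\|^2<1$ forces $I-LZ^*ZL^*\ge 0$; the automatic complete positivity of a scalar positive functional on an operator system then gives $\re{H_\mu(Z)}\ge 0$, so $H_\mu\in\scr{L}_d^+$ and $B_\mu:=(H_\mu-I)(H_\mu+I)^{-1}\in\scr{L}_d$. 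A re-application of the Herglotz formula to $B_\mu$ together with the observation that $\mu$ is uniquely determined on $L^\alpha$ by the Taylor-Taylor coefficients of $H_\mu$ (and hence on all of $\A_d+\A_d^*$ by conjugation and linearity) confirms $\mu_{B_\mu}=\mu$. The main obstacle will be the Herglotz derivation itself: the two appearances of the involution $\dag$ must cancel cleanly so that the correct Taylor-Taylor series of $H_B$ emerges, and the handling of the unbounded operator $H_A(R)^*$ on the free polynomials $L^\alpha 1$ — in particular that $L^\alpha 1\in\dom{H_A(R)^*}$ and that $\id_n\otimes\mu_B$ commutes with the Neumann summation — is supplied by the free Smirnov-class theory of \cite{JM-freeSmirnov} invoked in Remark~\ref{HergSmir}.
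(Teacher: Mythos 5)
Your proposal follows essentially the same route as the paper: expand $(I-ZL^*)^{-1}$ as a norm-convergent Neumann series, apply $\id_n\otimes\mu_B$ term by term, identify the resulting coefficients with the Taylor--Taylor coefficients of $H_B$ (the paper does this via the explicit formula $(M^R_{F(Z)})^*L^\alpha 1=\sum_{\gamma\beta=\alpha}L^\gamma 1\,F_\beta^*$ from the free Smirnov theory, where you invoke the equivalent kernel-vector/symbol identities), and obtain surjectivity by running the Herglotz formula backwards through the Cayley transform. Your explicit verification that $\re{H_\mu(Z)}\ge 0$ via the factorization $2(I-LZ^*)^{-1}(I-LZ^*ZL^*)(I-ZL^*)^{-1}$ and complete positivity of $\id_n\otimes\mu$ is a correct filling-in of a step the paper leaves implicit.
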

In the above,
$$ ZL^* := (Z \otimes I_{F^2} ) (I_n \otimes L) ^* = Z_1 \otimes L_1 ^* + ... + Z_d \otimes L_d ^*, $$ and
$$ I_{n \times F^2 } := I_n \otimes I_{F^2 _d}.$$ Also note that:
$$ (I -ZL^* ) ^{-1} = \sum _{k=0} ^\infty (ZL^* ) ^k = \sum _{\alpha \in \F ^d }  Z^\alpha \otimes (L^*) ^\alpha. $$

\begin{proof}
Let $H_B (Z) = \sum _\alpha H_\alpha Z^\alpha$ be the Taylor-Taylor series of $H_B (Z)$ about $Z =0 \in \B^d = \B ^d _1$. Consider $(\id _n \otimes \mu _B) \left( (I -ZL^* ) ^{-1} \right)$:
\ba (\id _n \otimes \mu _B ) \left( (I - ZL^* ) ^{-1} \right) & = & \sum _{\alpha} Z^\alpha \mu _B ( (L^*) ^\alpha ) \nn \\
& = & \sum _\alpha Z^\alpha \frac{1}{2} \left( \ip{H_A (R) ^* L^{\alpha ^\dag} 1}{1} +  \delta _{\alpha ^\dag , \emptyset} H  _\emptyset ^* \right) \nn \\
& = & \frac{1}{2} I_n H  _\emptyset ^* + \frac{1}{2} \sum _\alpha Z^\alpha \sum _{\ga \beta = \alpha ^\dag} \ip{ L^\ga }{1} H _{\beta ^\dag}  \nn \\
& = & \frac{1}{2} H_B (0_n ) ^* + \frac{1}{2} \sum _\alpha Z^\alpha H _\alpha \nn \\
& = & \frac{1}{2} H_B (0 _n ) ^* + \frac{1}{2} H_B (Z). \nn \ea
In the above, note that $H_A (R) = U_\dag H_B (L) U_\dag ^*$, so that $H_A (R) =M^R _{H_B ^\dag (Z)}$. We also used the fact that if $$ F(Z) = \sum _\alpha Z^\alpha F_\alpha, $$ is any free holomorphic function so that $M^R _{F(Z)}$ is densely-defined, then the monomials $L^\alpha 1 \in \dom{(M^R _{F(Z)}) ^*}$, and 
$$ (M^R _{F(Z)}) ^* L^\alpha 1 = \sum _{\ga \beta = \alpha} L^\ga 1 F_\beta ^*, $$ see for example \cite[Corollary 3.13]{JM-freeSmirnov} and \cite[Lemma 2.3]{JMfree}.  
Using that $(I +ZL^* ) (I -ZL^* ) ^{-1} = 2 (I-ZL^* ) ^{-1} - I$, we obtain:
$$ (\id _n \otimes \mu _B ) \left( (I +ZL^* ) (I -ZL^* ) ^{-1} \right) = H_B (0_n) ^* + H_B (Z) - \re{H_B (0 _n) }, $$
and the formula follows.

Conversely, starting with a positive linear functional on the free disk system, this Herglotz formula defines a free Herglotz function, and by Cayley transform
we obtain a free Schur function whose Clark functional is the original functional. This shows $B \mapsto \mu _B $ is surjective.
\end{proof}

\begin{remark}
Replacing $F^2 _d$ with $F^2 _d \otimes \H$ where $\H$ is a separable or finite-dimensional Hilbert space, the above results are easily extended to the operator-valued setting of $B \in \scr{L} _d (\H)$.  In this operator-valued setting we define the Clark map $\mu _B : \A _d + \A _d ^* \rightarrow \L (\H)$ by the formula:
$$\mu _B (L^\alpha ) := \frac{1}{2} H_\emptyset \delta _{\alpha , \emptyset} I_\H + \frac{1}{2} \left( \ip{1}{\cdot {1}} \otimes \mr{id} _\H \right) \left( H_A (R) ^* (L^\alpha \otimes I_\H) \right). $$ Again this defines a bijection between the operator-valued free Schur classes, the operator-valued free Herglotz-Schur classes, and $\L (\H)-$valued completely positive maps on the free disk system.
\end{remark}

\begin{remark}
Theorem \ref{Clarkfunthm} was first obtained by Popescu in \cite[Section 5, Theorem 5.3]{Pop-freehar} (with a different, but equivalent formula for the Clark functional). Given $B \in \scr{L} _d$, the Clark functional $\mu _B$ can also be defined in terms of the Fourier series coefficients of the free Herglotz function $H_B$, as in \cite[Section 4]{JMfree}.
\end{remark}

\section{Free Cauchy transforms}

In his seminal paper on unitary perturbations of the shift (see \cite{Sarason-dB} for the fully general, non-inner case), D.N. Clark showed that there is a canonical isometry, the \emph{weighted Cauchy Transform}, $\mc{F} _b$, from $H^2 (\mu _b)$, the closure of the analytic polynomials in $L^2 (\mu _b)$ (the Hilbert space of functions on $\T$ which are square-integrable with respect to $\mu _b$), onto $\scr{H} (b)$, the deBranges-Rovnyak space of $b \in \scr{S} =\scr{L} _1$ \cite{Clark1972}: For any polynomial, $p \in H^2 (\mu _b)$, 
$$ \left( \mc{F} _b  p \right)  (z) := (I -b(z) ) \int _\T  \frac{1}{1-z\zeta ^*}  p(\zeta) \mu _b (d\zeta). $$ 
One can also define an unweighted Cauchy Transform, $\mc{C} _b$, from $H^2 (\mu _b)$ onto $\scr{H} _+ (H_b) := \H (K^{H_b} )$, the \emph{Herglotz space} of $b$, the unique RKHS corresponding to the positive sesqui-analytic Herglotz kernel:
$$ K^{H_b} (z,w) := \frac{1}{2} \frac{H_b (z) + H_b (w) ^*}{1-zw^*}; \quad \quad z,w \in \D. $$ 
With a bit of algebra, one can verify that 
$$ K^{H_b} (z,w) = (I-b(z) ) ^{-1} k^b (z,w) (I-b(w) ^* ) ^{-1}, $$ where $k^b$ is the deBranges-Rovnyak kernel of $b$. The theory of RKHS then implies that the multiplier
$$ \mc{U} _b  := M _{(I-b)} : \scr{H} _+ (H_b) \rightarrow \scr{H} (b), $$ is an isometry of the Herglotz space onto the deBranges-Rovnyak space of $b$. The Cauchy Transform $\mc{C} _b : H^2 (\mu _b) \rightarrow \scr{H} _+ (H_b)$ is the linear map defined by:
$$ \left( \mc{C} _b (p) \right) (z) := \int _\T  \frac{1}{1-z\zeta ^*} p(\zeta) \mu _b (d\zeta), $$ and this extends to an isometry of $H^2 (\mu _b)$ onto the Herglotz space of $b$ so that $\mc{F} _b = \mc{U} _b \mc{C} _b$.

In \cite{JMfree}, we extended the notions of Cauchy Transform and weighted Cauchy Transform to the (operator-valued and) free setting using the theory of free formal RKHS. Here we describe Cauchy transforms in the setting of NC-RKHS: Assume that  $B \in \scr{L} _d (\H)$  or that $A \in \scr{R} _d (\H )$ are in the left or right operator-valued free Schur classes and that $A = B^\dag$ so that $\mu _A = \mu _B$. The \emph{free left Herglotz space} $\scr{H} ^L _+ (H_B) = \H _{nc }  (K ^L)$ is the NC-RKHS corresponding to the free left Herglotz kernel:
$$ K^L (Z,W) [P] := \frac{1}{2} \left(H_B (Z) (K(Z,W) [P] \otimes I_\H) + (K(Z,W) [P] \otimes I _\H) H_B (W) ^* \right), $$ where $K$ is the free Szeg\"{o} kernel. As in the classical theory, it is straightforward to verify that 
$$ \mc{U} _B := M^L _{(I-B)} : \scr{H} ^L _+ (H_B) \rightarrow \scr{H} ^L (B), $$ is an onto isometric left free multiplier. If $A \in \scr{R} _d (\H)$, then the right free Herglotz space, $\scr{H} ^R _+ (H_A)$ is defined similarly, and $\mc{U} _A = M^R _{(I -A^\dag (Z) )}$ is an isometric multiplier of $\scr{H} ^R _+ (H_A)$ onto $\scr{H} ^R (A)$.

We can expand this kernel in a formal power series (actually a convergent Taylor-Taylor series about $0$):
$$ K^L (Z,W)[P] := \sum _{\alpha ,\beta } K^L _{\alpha ,\beta} Z^\alpha P (W^* ) ^{\beta ^\dag}, $$ where
$$ K^L _{\alpha ,\beta } := \mu _B ((L^{\alpha ^\dag }) ^* L^{\beta ^\dag } ), $$ is the free left formal Herglotz kernel defined in \cite[Proposition 4.5]{JMfree}. In the right case, if $A \in \scr{R} _d (\H)$ one simply defines
$$ K^R _{\alpha ,\beta} := \mu _A ( (L ^\alpha ) ^* L^\beta ). $$ 

As described in \cite{JMfree}, given a transpose-conjugate pair $(A,B) \in \scr{R} _d (\H) \times \scr{L} _d (\H )$, the appropriate generalization of the (analytic part of the) Clark measure space $H^2 (\mu _b)$ is the Stinespring-Gelfand-Naimark-Segal (S-GNS) space or \emph{free Hardy space} of $\mu _B : \A _d + \A _d ^* \rightarrow \L (\H )$, $F^2 (\mu _B)$. This is defined as the Hilbert space completion of $\A _d \otimes \H$ (modulo vectors of zero length) with respect to the pre-inner product:
$$ \ip{L^\alpha \otimes h}{L^\beta \otimes g}_{\mu _B} := \ip{h}{\mu _B \left( (L^\alpha ) ^* L^\beta \right) g}_\H. $$ The semi-Dirichlet property: $(\A _d ) ^* \A _d = \A _d + \A _d ^*$ (norm closure) ensures this is a well-defined inner product, and the left regular representation: $L ^\alpha \mapsto \pi _B (L ) ^\alpha$ where 
$$ \pi _B (L^\alpha ) p(L) \otimes h := L^\alpha p(L) \otimes h, $$is completely isometric, unital, and extends to a $*$-representation of the Cuntz-Toeplitz $C^*-$algebra. We will set $\Pi ^B _k := \pi _B (L _k)$, so that $\Pi ^B = \pi _B (L)$ is the S-GNS row isometry on $F^2 (\mu _B)$.  This also provides a S-GNS formula for $\mu _B$:
$$ \mu _B (L^\alpha) = [I \otimes] _B ^* \pi _B (L) ^\alpha [I \otimes] _B, $$ where $[I\otimes ] _B : \H \rightarrow F^2 (\mu _B)$ is the bounded embedding:
$$ [I\otimes] _B h := I \otimes h. $$

The left and right Cauchy transforms, $\mc{C} ^L : F^2 (\mu _B ) \rightarrow \scr{H} ^L _+ (H_B)$ and $\mc{C} ^R : F^2  (\mu _A) \rightarrow \scr{H} ^R _+ (H_A)$ are then defined by
\be \mc{C} ^L (L^\alpha \otimes h) := K^L _{\alpha ^\dag}h, \quad \mbox{and}, \quad \mc{C} ^R (L^\alpha \otimes h ) := K^R _\alpha h. \label{freeCT} \ee 

Observe that if $H_B (Z) = \sum _\alpha H_\alpha Z^\alpha$, then $H_A (Z) = \sum _\alpha H_{\alpha ^\dag} Z^\alpha$. One can then calculate that:
$$ K^L _{\alpha, \beta} = \left\{ \begin{array}{cc} \frac{1}{2} H  _{(\beta ^\dag \sm \alpha ^\dag) ^\dag } ^* & \beta ^\dag > \alpha ^\dag \\ 
\frac{1}{2}  H _{(\alpha ^\dag \sm \beta ^\dag) ^\dag } & \alpha ^\dag > \beta ^\dag \\
\re{H _\emptyset} & \alpha = \beta  \\
0 & \mbox{else}. \end{array} \right. $$
In the above, we write $\beta \geq \alpha$ if $\beta = \alpha \ga$, and $\beta > \alpha$ if $\beta \geq \alpha$ and $\beta \neq \alpha$.
Similarly,
$$ K^R _{\alpha, \beta} = \left\{ \begin{array}{cc} \frac{1}{2} H  _{(\beta  \sm \alpha) ^\dag } ^* & \beta  > \alpha  \\ 
\frac{1}{2}  H _{(\alpha  \sm \beta) ^\dag } & \alpha  > \beta  \\
\re{H _\emptyset} & \alpha = \beta  \\
0 & \mbox{else}. \end{array} \right. $$

These formulas follow easily from the Clark map formula. For example if $\beta > \alpha$ (and $\H = \C$) then 
\ba K^R _{\alpha ,\beta} & = & \mu _A ( L^{\beta \sm \alpha} ) \nn \\
& = & \frac{1}{2} \left( \ip{H_A (R) ^* 1}{L^{\beta \sm \alpha} 1} + \ip{1}{H_A (R) ^* L^{\beta \sm \alpha} 1 } \right) \nn \\
& = & 0 + \frac{1}{2} \sum _{\ga \la = \beta \sm \alpha} \ip{ 1}{L^{\ga  } 1 } H_{\la ^\dag} ^*  \nn \\
& = &  \frac{1}{2} (H) _{(\beta \sm \alpha)^\dag} ^*. \nn \ea
The above formulas allow one to alternatively define the Clark map of $B$ in terms of the Fourier series coefficients of the Herglotz functions $H_B$, as was done in \cite[Section 4]{JMfree}.

\begin{lemma}
The left free Cauchy transform acts as:
$$ (\mc{C} ^L p(L) 1) (Z) = (\id _n \otimes \mu _B ) \left( (I_{n\times F^2} -ZL^*) ^{-1} (I_n \otimes p(L)) \right). $$
\end{lemma}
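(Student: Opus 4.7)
The plan is to expand both sides of the asserted identity as formal Taylor--Taylor series in $Z$ about the origin $0 \in \B^d_n$ and verify that the coefficients match term-by-term.

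For the left-hand side, by linearity of $\mc{C}^L$ and the definition (\ref{freeCT}), if $p(L) = \sum_\beta p_\beta L^\beta$ then
$$ \mc{C}^L(p(L) 1) \;=\; \sum_\beta p_\beta\, K^L_{\beta^\dag}. $$
Using the Taylor expansion $K^L_{\beta^\dag}(Z) = \sum_\alpha K^L_{\alpha, \beta^\dag} Z^\alpha$ together with the identity $K^L_{\alpha, \gamma} = \mu_B((L^{\alpha^\dag})^* L^{\gamma^\dag})$ recorded earlier, and the fact that $\dag$ is an involution, this simplifies to
$$ (\mc{C}^L p(L) 1)(Z) \;=\; \sum_{\alpha, \beta} p_\beta\, \mu_B\bigl((L^{\alpha^\dag})^* L^\beta\bigr)\, Z^\alpha. $$

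For the right-hand side, I would use the Neumann series: since $Z \in \B^d_n$ is a strict row contraction and each $L_k^*$ is a contraction,
$$ (I_{n \times F^2} - ZL^*)^{-1} \;=\; \sum_{k = 0}^\infty (ZL^*)^k \;=\; \sum_{\alpha \in \F^d} Z^\alpha \otimes (L^{\alpha^\dag})^*, $$
converging in operator norm on $\C^{n \times n} \otimes F^2_d$; the last equality uses $L_{i_1}^* \cdots L_{i_k}^* = (L_{i_k} \cdots L_{i_1})^* = (L^{\alpha^\dag})^*$ for $\alpha = i_1 \cdots i_k$. Right-multiplying by $I_n \otimes p(L)$ and applying $\id_n \otimes \mu_B$ termwise yields
$$ (\id_n \otimes \mu_B)\bigl((I - ZL^*)^{-1}(I_n \otimes p(L))\bigr) \;=\; \sum_{\alpha, \beta} p_\beta\, Z^\alpha\, \mu_B\bigl((L^{\alpha^\dag})^* L^\beta\bigr), $$
which matches the expansion of the left-hand side exactly.

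The only point requiring justification is the termwise application of $\id_n \otimes \mu_B$ to the Neumann series. This is legitimate because $\mu_B$ extends, by the preceding lemma, to a bounded positive linear functional on the norm-closed operator system $\A_d + \A_d^*$, and the Neumann series converges in operator norm since $\|ZL^*\| < 1$. No real obstacle arises; the argument is essentially coefficient bookkeeping, and the main thing to keep straight is the $\dag$-convention distinguishing $(L^\alpha)^* = (L^*)^{\alpha^\dag}$ from $(L^*)^\alpha$.
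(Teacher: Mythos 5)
Your proof is correct and follows essentially the same route as the paper's: both reduce to matching Taylor--Taylor coefficients, using $\mathcal{C}^L(L^{\beta}\otimes 1)=K^L_{\beta^\dag}$, the expansion $K^L_\gamma(Z)=\sum_\alpha K^L_{\alpha,\gamma}Z^\alpha$ with $K^L_{\alpha,\gamma}=\mu_B((L^{\alpha^\dag})^*L^{\gamma^\dag})$, and the Neumann series $(I-ZL^*)^{-1}=\sum_\alpha Z^\alpha\otimes (L^{\alpha^\dag})^*$. The paper simply says ``check on monomials'' and writes the single resulting identity; your version spells out the same coefficient bookkeeping for a general polynomial and adds the (correct) justification for applying $\id_n\otimes\mu_B$ termwise.
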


\begin{proof}
It suffices to check on monomials, so take $p(L) = L^{\beta ^\dag} 1$. Then, the above becomes:
$$ \sum _\alpha Z^\alpha \mu _B ( (L^{\alpha ^\dag} ) ^* L^{\beta ^\dag} ) = K ^L _\alpha (Z), $$ as claimed.
\end{proof}

\begin{lemma} \label{CTofkernel}
Given any $Z \in \B ^d _n$ and $ v , y \in \C ^n$,
\ba K ^L  \{ Z, y ,v  \} &= &\mc{C} ^L \sum _{\alpha} \ipcn{Z^\alpha v }{y} L^{\alpha ^\dag} 1 \nn \\
& = & \mc{C} ^L \ipcn{v \otimes I_{F^2 _d}}{(I-Z^*L)^{-1} (y \otimes 1)}. \nn \ea
\end{lemma}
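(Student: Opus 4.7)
The plan is to expand the kernel vector $K^L\{Z,y,v\}$ into a series of coefficient kernel maps $K^L_\alpha$ via the Taylor--Taylor expansion for elements of the NC-RKHS $\scr{H}^L_+(H_B)$, and then pull the sum back through the Cauchy isometry $\mc{C}^L$, whose action on free monomials is $\mc{C}^L(L^{\alpha^\dag}\otimes 1) = K^L_\alpha$ by the defining formula \eqref{freeCT}.

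First I would combine the reproducing property of $K^L\{Z,y,v\}$ with the Taylor--Taylor expansion $f(Z) = \sum_\alpha (K^{L*}_\alpha f)\, Z^\alpha$ (valid for any $f \in \scr{H}^L_+(H_B)$ by the coefficient evaluation discussion in the preliminaries). This yields
$$\ip{K^L\{Z,y,v\}}{f} = \ipcn{y}{f(Z)v} = \sum_\alpha \ipcn{y}{Z^\alpha v}\,(K^{L*}_\alpha f) = \Big\langle \sum_\alpha \ipcn{Z^\alpha v}{y}\,K^L_\alpha, f\Big\rangle.$$
Since $f$ is arbitrary this identifies $K^L\{Z,y,v\} = \sum_\alpha \ipcn{Z^\alpha v}{y}\,K^L_\alpha$ with the partial sums Cauchy in $\scr{H}^L_+(H_B)$. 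Because $\mc{C}^L$ is an isometry (from the factorization $\mc{F}_B = \mc{U}_B\mc{C}^L$, with $\mc{U}_B$ an onto isometric multiplier) sending $L^{\alpha^\dag}\otimes 1$ to $K^L_\alpha$, the partial sums of $\sum_\alpha \ipcn{Z^\alpha v}{y}\,L^{\alpha^\dag}\otimes 1$ are Cauchy in $F^2(\mu_B)$ as well, and applying $\mc{C}^L$ term by term produces the first asserted equality.

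For the second equality I would simply rewrite the series compactly. The geometric expansion
$$(I_{n\times F^2} - Z^*L)^{-1} = \sum_\alpha (Z^*L)^\alpha = \sum_\alpha (Z^{\alpha^\dag})^* \otimes L^\alpha$$
applied to $y\otimes 1$, followed by the partial inner product $\ipcn{v\otimes I_{F^2_d}}{\cdot}$, yields $\sum_\alpha \ipcn{Z^{\alpha^\dag}v}{y}\, L^\alpha 1$, and the reindexing $\beta = \alpha^\dag$ matches this to the preceding sum.

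I expect the only real work to be bookkeeping of the $\dag$-reindexing, since all convergence questions are subsumed by the isometric property of $\mc{C}^L$ together with the fact that $K^L\{Z,y,v\}$ is a bona fide element of $\scr{H}^L_+(H_B)$; there is no new analytic content beyond the Taylor--Taylor expansion of NC-RKHS elements already invoked in the preliminaries.
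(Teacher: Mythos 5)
Your proposal is correct and follows essentially the same route as the paper: expand the reproducing identity $\ip{K^L\{Z,y,v\}}{f}=\ipcn{y}{f(Z)v}$ via the Taylor--Taylor coefficients $f_\alpha = K^{L*}_\alpha f$ to identify $K^L\{Z,y,v\}=\sum_\alpha \ipcn{Z^\alpha v}{y}K^L_\alpha$, then apply the defining action $\mc{C}^L L^{\alpha^\dag}1=K^L_\alpha$ and resum the geometric series for $(I-Z^*L)^{-1}$. The extra care you take with convergence of partial sums and the $\dag$-reindexing is sound but does not change the argument.
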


\begin{proof}
For any $F \in \scr{H} ^{L,+} (H_B)$, we have that
\ba \ip{K ^L \{ Z,y,v \} }{F} &=& \ipcn{y}{F(Z) v} \nn \\
& = & \sum _\alpha \ipcn{y}{Z^\alpha v} F_\alpha \nn \\
& = & \sum _\alpha \ipcn{y}{Z^\alpha v} \ip{K ^L _\alpha}{F} \nn \\
& = & \ip{\sum _\alpha \ipcn{Z^\alpha v}{y} K ^L  _\alpha}{F}. \nn \ea
The above proves that $K ^L \zyv = \sum _\alpha \ipcn{Z^\alpha v}{y} K ^L _\alpha$, and by definition, $K^L _\alpha = \mc{C} ^L L^{\alpha ^\dag} 1$. Hence we have that:
\ba K^L \{ Z , y ,v \} & = & \mc{C} ^L \sum \ipcn{Z^\alpha v}{y} L^{\alpha ^\dag} 1 \nn \\
& = & \mc{C} ^L \sum \ipcn{ v \otimes I _{F^2 _d}}{ (Z^*) ^{\alpha ^\dag} \otimes L^{\alpha ^\dag} (y \otimes 1) } \nn \\
& = &  \mc{C} ^L \ipcn{v \otimes I_{F^2 _d}}{(I-Z^*L)^{-1} (y \otimes 1)}. \nn \ea
\end{proof}

\begin{remark}
We also have the formula:
\ba K ^L \wxu (Z) & = & \sum _{\alpha} \ipcn{W^\alpha u}{x} (\id _n \otimes \mu _B ) \left( (I -ZL^*) ^{-1} (I_n \otimes L^{\alpha ^\dag} ) \right) \nn \\
& = & (\ipcn{\cdot u}{x} \otimes \mu _B ) \left( (I - ZL^* ) ^{-1} \sum _\alpha (W^*) ^{\alpha ^\dag} \otimes L^{\alpha ^\dag} \right) \nn \\
& = &  (\ipcn{\cdot u}{x} \otimes \mu _B ) \left( (I - ZL^* ) ^{-1} (I - LW^* ) ^{-1} \right). \nn \ea
The above is the free version of the commutative Cauchy transform formula,
$$ \mu _b ((I-zL^*)^{-1} (I - Lw^* ) ^{-1} ) = K^b (z,w), $$ from \cite[Proposition 2.6, Subsection 2.8]{JM}. Here $K^b (z,w)$ is the positive Herglotz kernel for $b$ in the Schur class of contractive Drury-Arveson space multipliers.
\end{remark}

If $A = B^\dag$ so that $\mu _A = \mu _B$ and $F^2 (\mu _A ) = F^2 (\mu _B)$, then the weighted free Cauchy transforms $\mc{F} ^L , \mc{F} ^R$, are defined as:
$$ \mc{F} ^L = M^L _{(I-B)} \mc{C} ^L, \quad \mbox{and}, \quad \mc{F} ^R = M^R _{(I-A^\dag )} \mc{C} ^R, $$
and these are isometries of $F^2 (\mu _B)$ onto $\scr{H} ^L (B)$ and $\scr{H} ^R (A)$, respectively.

\subsection{Cauchy Transform of the Stinespring-GNS representation} \label{CTofSGNS}

As in the commutative setting, if $B \in \scr{L} _d (\H)$ we define
$$ V^B := \mc{C} ^L \pi _{\mu _B} (L) (\mc{C} ^L ) ^*, $$ a row isometry on the left Herglotz space
$\scr{H} ^L _+ (H_B)$. 

\begin{prop} \label{CTthm}
    The range $\mc{R}$ of the row isometry $V^B$ is:
$$ \mc{R} :=\bigvee \left( K^{H_B} \{ Z , y , v \} - K^{H_B} \{ 0 _n , y, v \} \right) = \bigvee _{\alpha \neq \emptyset} K^{H_B} _\alpha, $$ and
for any $Z \in \B ^d _n, \ v, y \in \C^n$, and $j=1, \dots , d$, 
$$ 
(V_j^B) ^* \left( K^{H_B} \{ Z , y , v \} - K^{H_B} \{ 0 _n , y, v \} \right) = K ^{H_B} \{ Z , y , Z_j v \} 
$$ 
(so that the span of all such vectors is dense in $\scr{H} ^L _+ (H_B) \otimes \C ^d$).

The image of $\ran{V^B}$ under $(\mc{C} ^L) ^*$ is $F^2 _0 (\mu _B) = \bigvee _{\alpha \neq \emptyset} L^\alpha \otimes \H$, the closure of the non-constant free monomials in $F^2 (\mu _B)$. If $F \in \scr{H} ^L _+ (H_B)$ is orthogonal to $\ran{V^B}$, then there is a $f \in \H$ so that for any $Z \in \B ^d _n$, 
$$ F(Z) = I_n \otimes f, $$ \emph{i.e.} $F \equiv f$ is constant-valued. 
\end{prop}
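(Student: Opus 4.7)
The plan is to reduce every claim to an explicit computation of how $V^B$ and its adjoint act on the coefficient kernel maps $K^{H_B}_\alpha$. Combining the definition (\ref{freeCT}), $\mc{C}^L(L^\alpha\otimes h)=K^{H_B}_{\alpha^\dag}h$, with the left-regular action $\pi_B(L_j)(L^\alpha\otimes h)=L^{j\alpha}\otimes h$ on the S-GNS space, I would first obtain
\[
V^B_j K^{H_B}_\beta h \;=\; \mc{C}^L\pi_B(L_j)(\mc{C}^L)^*K^{H_B}_\beta h \;=\; K^{H_B}_{\beta j}h.
\]
Since every non-empty word is uniquely of the form $\beta j$ for some $\beta\in\F^d$ and $j\in\{1,\ldots,d\}$, this immediately yields $\ran{V^B}=\bigvee_{\alpha\neq\emptyset}K^{H_B}_\alpha\H$. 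Pulling this back under the isometry $\mc{C}^L$ simultaneously identifies $(\mc{C}^L)^*\ran{V^B}$ with $\ran{\pi_B(L)}=F^2_0(\mu_B)$.

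To equate this range with $\bigvee(K^{H_B}\{Z,y,v\}-K^{H_B}\{0_n,y,v\})$, I would expand via Lemma \ref{CTofkernel}:
\[
K^{H_B}\{Z,y,v\}-K^{H_B}\{0_n,y,v\} \;=\; \sum_{\alpha\neq\emptyset}\ipcn{Z^\alpha v}{y}K^{H_B}_\alpha,
\]
which gives the forward inclusion. For the reverse inclusion I would argue that the two subspaces share the same orthogonal complement in $\scr{H}^L_+(H_B)$: a vector $F$ is orthogonal to every difference $K^{H_B}\{Z,y,v\}-K^{H_B}\{0_n,y,v\}$ iff $\ipcn{y}{(F(Z)-F(0_n))v}=0$ for all $Z,y,v$, i.e.\ iff $F(Z)=I_n\otimes F_\emptyset$ is constant-valued; and $F\perp\bigvee_{\alpha\neq\emptyset}K^{H_B}_\alpha$ iff $F_\alpha=(K^{H_B}_\alpha)^*F=0$ for all $\alpha\neq\emptyset$, which is the same condition. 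This same observation delivers the final constancy assertion for free.

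For the adjoint formula, since $\pi_B$ is a $*$-representation of the Cuntz--Toeplitz algebra, the Cuntz relation $L_j^*L^\alpha=\delta_{j,i_1}L^{\alpha'}$ for $\alpha=i_1\alpha'$ gives $(V^B_j)^*K^{H_B}_\beta=K^{H_B}_{\beta'}$ whenever $\beta=\beta'j$ and zero otherwise. Applied termwise to the expansion above and reindexed by $\alpha=\alpha'j$,
\[
(V^B_j)^*\bigl(K^{H_B}\{Z,y,v\}-K^{H_B}\{0_n,y,v\}\bigr) \;=\; \sum_{\alpha'}\ipcn{Z^{\alpha'}Z_jv}{y}K^{H_B}_{\alpha'} \;=\; K^{H_B}\{Z,y,Z_jv\}.
\]
Density of these vectors in $\scr{H}^L_+(H_B)\otimes\C^d$ is then automatic: $V^B$ being a row isometry, $(V^B)^*$ restricts to a unitary from $\ran{V^B}$ onto $\scr{H}^L_+(H_B)\otimes\C^d$ and so carries the dense subset of differences $K^{H_B}\{Z,y,v\}-K^{H_B}\{0_n,y,v\}$ to a dense subset of the codomain. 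The one step that requires careful bookkeeping is the transpose $\dag$ in the definition of $\mc{C}^L$: tracking it correctly is what makes $V^B_jK^{H_B}_\beta=K^{H_B}_{\beta j}$ attach the new letter on the \emph{right} (rather than the left), and hence makes all the resulting identifications consistent.
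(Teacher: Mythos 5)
Your argument is correct and, for most steps, runs parallel to the paper's; the one place where you take a genuinely different route is in proving $\bigvee\left( K^{H_B}\{Z,y,v\}-K^{H_B}\{0_n,y,v\}\right)=\bigvee_{\alpha\neq\emptyset}K^{H_B}_\alpha$. The paper gets this by invoking the jointly nilpotent points from the proof of \cite[Lemma 3.14]{JM-freeSmirnov}: for each word $\alpha$ one chooses $Z,y,v$ with $\ipcn{Z^\beta v}{y}=\delta_{\alpha,\beta}$, so that each coefficient kernel $K^{H_B}_\alpha$, $\alpha\neq\emptyset$, is \emph{literally} a difference of kernel vectors. You instead compare orthogonal complements, identifying both with the constant functions; this is equally valid (it uses only the coefficient-evaluation identity $F_\alpha=(K_\alpha)^*F$ and the convergent Taylor--Taylor expansion already recorded in the paper) and has the advantage of delivering the final constancy assertion in the same stroke, as you observe. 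Your computation of $(V^B_j)^*$ on the differences via $\mc{C}^L$ and $\pi_B(L_j)^*$, and the density argument from the isometry property of $V^B$, coincide with the paper's.

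One caution: your blanket rule ``$(V^B_j)^*K^{H_B}_\beta=K^{H_B}_{\beta'}$ if $\beta=\beta'j$ and zero otherwise'' is false at $\beta=\emptyset$. The vector $(\mc{C}^L)^*K^{H_B}_\emptyset h=I\otimes h$ need not be orthogonal to $\ran{\pi_B(L_j)}$ in $F^2(\mu_B)$ --- whether $I\otimes\H\subseteq F^2_0(\mu_B)$ is exactly the column-extremeness dichotomy of Theorem~\ref{CEthm}, so asserting $(V^B_j)^*K^{H_B}_\emptyset=0$ in general would force every $B$ to fail condition (5) there. Your proof survives because the expansion you act on runs only over $\alpha\neq\emptyset$: for such $\alpha$, writing $\alpha^\dag=i\delta$ one has $L^{\alpha^\dag}\otimes h=\pi_B(L_i)(L^\delta\otimes h)$, and the Cuntz--Toeplitz relation $\pi_B(L_j)^*\pi_B(L_i)=\delta_{ij}I$ yields precisely the rule you apply (with the letter deleted from the correct end after undoing the transpose). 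You should state the rule only for nonempty $\beta$ to keep the argument airtight.
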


\begin{proof}
By the proof of \cite[Lemma 3.14]{JM-freeSmirnov}, for any $\alpha \in \F ^d$, one can find jointly nilpotent $Z \in \B ^d _n$ and $v , y \in \C ^n$ with $n =|\alpha | +1$ so that 
$$ \ipcn{Z^\beta v}{y} = \delta _{\alpha, \beta}. $$ It then follows from Lemma \ref{CTofkernel} and the definition of left free Cauchy transform that $K ^L \{ Z , v, y \} = K^L _\alpha$. This shows that the two formulas for $\mc{R}$ above are the same. By definition, $\mc{C} ^L (L^\alpha \otimes h) = K^L _{\alpha ^\dag} h$, and it follows that the image of $\mc{R}$ under inverse Cauchy transform is $F^2 _0 (\mu _B) = \ran{\pi _B (L)}$. Since $V^B$ and $\Pi ^B$ are unitarily equivalent under Cauchy transform, it follows that $\mc{R} = \ran{V^B}$. If $F \in \scr{H} ^L _+ (H_B)$ is orthogonal to $\ran{V^B}$, set 
$ f:= (K_0 ^{H_B} ) ^* F \in \H. $ Then, for any $Z \in \B ^d _n$ and $v^*,y \in \C ^n$, 
\ba 0 & = & \ip{K ^{H_B} \zyv}{F} - \ip{K^{H_B} \{0_n , y , v \}}{F} \nn \\
& = & \left( y \otimes I_\H , F(Z) v \right) - (I_n \otimes f) \ipcn{y}{v}, \nn \ea and it follows that $F(Z) = I_n \otimes f$.

The second claim is a straightforward calculation: for each $j=1, \dots , d$,
\ba (V_j^B) ^* \left( K ^{L} \{ Z, y, v \} - K^L \{ 0_n , y, v \} \right) & = & \mc{C} ^L \pi (L_j) ^* \sum _{\alpha \neq \emptyset} \ipcn{Z^\alpha v}{y} L^{\alpha ^\dag} 1 \nn \\
& = & \mc{C} ^L \sum _\beta \ipcn{Z^{\beta j} v}{y} L^{\beta ^\dag} 1 \nn \\
& = & K^L \{ Z , y , Z_jv \}. \nn \ea Since $V^B$ is an isometry, the above shows that 
the closed span of $\oplus_{j=1}^d K^L \{ Z , y , Z_jv \}$ is all of $\scr{H} ^L _+ (H_B) \otimes \C ^d$. 
\end{proof}

\section{Gleason solutions}

\subsection{The free setting}

Fix $A \in \scr{R} _d (\H , \J )$. Exactly as in the commutative setting, we define:

\begin{defn}
    A linear map $X : \scr{H} ^R (A) \rightarrow \scr{H} ^R (A) \otimes \C ^d$ is called a \emph{Gleason solution} for $\scr{H} ^R (A)$ if:
\be Z (X f) (Z) = f(Z) - f(0_n) \quad \quad \forall \ Z \in \B ^d _n. \label{GSHA} \ee
Such an $X$ is \emph{contractive} if 
\be \label{eqn:contractive-gleason-def} X ^* X \leq I - K_0 ^A (K_0 ^A ) ^*, \ee and \emph{extremal} if equality holds.

Similarly, a linear map $\mbf{A} : \H \rightarrow \scr{H} ^R (A) \otimes \C ^d$ is called a \emph{Gleason solution} for $A$ if
\be Z \mbf{A} (Z) = A ^\dag (Z) - A(0) I_n \quad \quad \forall \ Z \in \B ^d _\N. \ee $\mbf{A}$ is \emph{contractive} if 
\be \mbf{A} ^* \mbf{A}  \leq I _\H - A (0) ^* A (0), \ee and \emph{extremal} if equality holds.
\end{defn}

Define: \be \check{X} := L^* \otimes I_\J | _{\scr{H} ^R (A)}, \quad \mbox{and} \quad \check{\mbf{A}} := (L^* \otimes I_\J) A. \ee The right free deBranges-Rovnyak space, $\scr{H} ^R (A)$, is left shift co-invariant, $L_j ^* A \in \scr{H} ^R (A)$, and $\check{X}, \check{\mbf{A}}$ obey the contractivity conditions of Gleason solutions for $\scr{H} ^R (A), A$, respectively \cite[Proposition 4.2]{Ball-Fock}. It is also easy to check that $\check{X}, \check{\mbf{A}}$ are Gleason solutions. For example, given $f = \sum _\alpha f_\alpha L^\alpha 1$, it is clear that 
$$ (\check{X} _j f) (Z) =  \sum _{\alpha } f_\alpha L^{\alpha \sm j} 1, $$ where we set $L^{\alpha \sm j} = L^\beta $ if $\beta = j\alpha$, and $=0$ else. It follows that 
$$ Z (\check{X} f ) (Z) = \sum _{\alpha \neq \emptyset} f_\alpha Z^\alpha = f(Z) - f(0 _n ). $$
Also note that the defining formula (\ref{GSHA}) for a Gleason solution for $\scr{H} ^R (A)$ is equivalent to: $$  (L^* K_Z ^A) ^* (Xf) = (K_Z ^A) ^* f - (K_{0 _n } ^A) ^* f, $$ which can be re-arranged to:
\be (I - X^* L^* ) K_Z ^A = K_{0_n} ^A. \label{kerform} \ee

Given $A \in \scr{R} _d (\H , \J)$, the \emph{support} of $A$ is defined to be 
\be \mr{supp} (A) := \bigvee _{Z \in \B ^d _n; \ v^* , y \in \C ^n}  \left( y \otimes I_\H , A ^\dag (Z) ^*  v^* \otimes \J \right) \subseteq \H. \label{support} \ee

\begin{prop} \label{GSforHBandB}
    Suppose that $A \in \scr{R} _d (\H , \J )$. A linear map $X : \scr{H} ^R (A) \rightarrow \scr{H} ^R (A) \otimes \C ^d$ is a contractive Gleason solution for $\scr{H} ^R (A)$ if and only if there is a contractive Gleason solution $\mbf{A} : \H \rightarrow \scr{H} ^R (A) \otimes \C ^d$ for $A$ so that,
$$ X K ^A \{ W , x , u \}  = K ^A \{ W , W^* x , u \}  -  \mbf{A} \ipcm{u}{A ^\dag (W)^* x} \ \in \L( \J , \scr{H} ^R (A) \otimes \C ^d). $$ $X$ is extremal if $\mbf{A}$ is extremal. Conversely $\mbf{A}$ is extremal if $X$ is extremal and $\mr{supp} (A) = \H$. 
\end{prop}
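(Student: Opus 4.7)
My plan is to prove the iff by parameterizing contractive Gleason solutions against the canonical pair $\check{X} := L^* \otimes I_\J|_{\scr{H}^R(A)}$ and $\check{\mbf{A}} := (L^* \otimes I_\J) A$ introduced just before the proposition. First I would directly verify that $(\check{X}, \check{\mbf{A}})$ satisfies the displayed identity: expanding $L^* K^A\{W, x, u\}$ using the reproducing property on $\scr{H}^R(A)$ and the action of $M^R_A$ on the vacuum reduces the identity to a shift-of-kernel computation, with the $\check{\mbf{A}}$ term accounting precisely for the multiplier correction. With the formula verified for the canonical pair, both directions of the iff then amount to extending this correspondence to arbitrary contractive Gleason solutions.

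For the easier $(\Leftarrow)$ direction, given $\mbf{A}$, I would define $X$ on the spanning set $\{K^A\{W,x,u\}g\}$ of $\scr{H}^R(A)$ by the displayed formula and extend by linearity. The Gleason equation is verified by pairing $X K^A\{W, x, u\}g$ against a test kernel vector $K^A\{Z, y, v\}h$, invoking the reproducing property and the Gleason relation $Z \mbf{A}(Z) = A^\dag(Z) - A(0)I_n$ to recover $Z(Xf)(Z) = f(Z) - f(0_n)$. Contractivity $X^* X \leq I - K_0^A (K_0^A)^*$ is then obtained from a direct norm calculation on a finite sum of kernel vectors, with $\mbf{A}^* \mbf{A} \leq I_\H - A(0)^* A(0)$ bounding the cross terms arising from the $\mbf{A}$-piece of the formula.

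The $(\Rightarrow)$ direction carries the main work. Given $X$, I would rearrange the identity and \emph{define}
$$ \mbf{A}\left( \ipcm{u}{A^\dag(W)^* x}\, g \right) := K^A\{W, W^* x, u\}g - X K^A\{W, x, u\}g, $$
initially only on the closed linear span $\mr{supp}(A) \subseteq \H$ of the input vectors, and then extend by zero on $\mr{supp}(A)^\perp$. The crux is well-definedness and boundedness: if a finite combination $\sum_k c_k \ipcm{u_k}{A^\dag(W_k)^* x_k} g_k$ vanishes in $\H$, the corresponding combination of right-hand sides must vanish in $\scr{H}^R(A) \otimes \C^d$. I would check this by pairing the combination against an arbitrary $h \otimes e_j$, reducing via the reproducing formula to $\C^m$-inner products involving $h(W)$ and $(X_j^* h)(W)$, and applying the adjoint Gleason identity $(I - X^* L^*) K^A_Z = K^A_{0_n}$ from (\ref{kerform}) to see that the result depends on the inputs only through their $\H$-combination. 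Once $\mbf{A}$ is defined, its Gleason equation and contractivity follow by reversing the computations from the $(\Leftarrow)$ step, and the extremality claims fall out of the same defect calculation: the gap in $X^*X \leq I - K_0^A(K_0^A)^*$ matches the gap in $\mbf{A}^*\mbf{A} \leq I_\H - A(0)^*A(0)$ on $\mr{supp}(A)$, which is exactly where the hypothesis $\mr{supp}(A) = \H$ enters the converse extremality statement. The main obstacle I anticipate is this well-definedness step, since it is the one place where the Gleason property of $X$ must be leveraged nontrivially to cancel the apparent $(W, x, u)$-dependence of the right-hand side.
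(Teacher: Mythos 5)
Your sufficiency direction ($\Leftarrow$) is essentially the paper's proof: the paper likewise verifies the Gleason equation for $X$ by pairing $XK^A\{W,x,u\}$ against $K^A\{Z,Z^*y,v\}$ and invoking $Z\mbf{A}(Z)=A^\dag(Z)-A(0_n)$, and then establishes contractivity by expanding $\|XK^A\{Z,y,v\}\|^2$ and applying $\mbf{A}^*\mbf{A}\leq I-A(0)^*A(0)$ to the diagonal $\|\mbf{A}\|^2$ term (the cross terms are computed exactly via the Gleason relation, not estimated). Note that the paper proves \emph{only} this direction, explicitly deferring the converse to the commutative antecedent \cite[Theorem 4.4]{JM}, so your attempt at ($\Rightarrow$) goes beyond what is written there.

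That attempt has a gap precisely at the step you flag as the crux. The identity (\ref{kerform}) encodes only the \emph{summed} relation $\sum_j X_j^*L_j^*K_Z^A=K_Z^A-K_{0_n}^A$; it carries no information about an individual component $X_j^*h$ for an arbitrary test vector $h\in\scr{H}^R(A)$. Pairing your candidate expression componentwise against $h\otimes e_j$ yields $\left(W_j^*x\otimes g, h(W)u\right)-\left(x\otimes g,(X_j^*h)(W)u\right)$, and the Gleason property alone cannot show this depends on $(W,x,u,g)$ only through $\ipcm{u}{A^\dag(W)^*x}g$; indeed, the Gleason equation by itself does not determine the individual $X_j$ in the commutative setting, where well-definedness and contractivity of $\mbf{A}$ are extracted from the hypothesis $X^*X\leq I-K_0^A(K_0^A)^*$ by a Douglas-factorization type argument --- a hypothesis your well-definedness step never invokes. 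In the free setting there is a cleaner repair that you are already halfway to: since $L\otimes I_\J$ is a row \emph{isometry}, hence injective, the Gleason equation $\sum_j L_j(X_jf)=f-f(0)$ forces $X=(L^*\otimes I_\J)|_{\scr{H}^R(A)}$ and likewise $\mbf{A}=(L^*\otimes I_\J)A$ outright, so necessity reduces to your proposed first step of verifying the displayed identity for the canonical pair $(\check{X},\check{\mbf{A}})$, together with the observation that $\check{\mbf{A}}$ vanishes on $\mr{supp}(A)^\perp$. Your extremality bookkeeping (matching the two defect operators on $\mr{supp}(A)$) is fine once this is in place.
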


This is a free analogue of \cite[Theorem 4.4]{JM}. Since the proof is (formally) analogous, we prove only the sufficiency.
\begin{remark}
The expression $\ipcm{u}{A^\dag (W)^* x}$ is to be interpreted as taking values in $\L (\J, \H)$ since, for $W \in \B ^d _m$, $A^\dag (W) ^* \in \C ^{m\times m} \otimes \L (\J , \H)$. Namely, for any $g \in \J$, the above formula can be written:
$$ X K^A \{ W, x, u \} g = K^A \{ W , W^* x , u \} g - \mbf{A} \left( u \otimes I_\H , A^\dag (W)^* x \otimes g \right). $$ 
\end{remark}
\begin{proof}
Let $\mbf{A}$ be a contractive Gleason solution for $A$. We wish to show that the formula in the proposition statement defines a contractive Gleason solution for $\scr{H} ^R (A)$.
To prove this, it is sufficient to check that Formula (\ref{GSHA}) holds on kernel vectors. Namely, it suffices to show that 
\ba & &  \ip{K ^A \{Z,Z^*y,v\} }{X K \{ W , x, u \} }  =  \ipcn{y}{Z (XK^A \{W, x, u \} ) (Z) v}  \nn \\
& = & \ipcn{y}{ \left(K^A (Z,W)[v u^*] - K^A (0_n , W) [vu^*] \right) x  } \in \L (\J). \ea 
In the above we have used the compact notation:
$$ \ip{K ^A \{Z,Z^*y,v\} }{X K \{ W , x, u \} } := \sum_{j=1}^d \ip{K ^A \{Z,Z_j^*y,v\} }{X_j K \{ W , x, u \} }, $$ and we will continue to use this throughout.
Calculate:
\ba & &  \ip{K ^A \{Z , Z^*y , v \}}{X K\wxu}  = \ip{ K^A \{ Z, Z^* y , v \} }{K^A \{ W , W^* x , u \} } \nn \\
& & -\ip{ K^A \{ Z, Z^* y , v \} }{ \mbf{A} } \ipcm{ u}{A^\dag (W) ^* x} \nn \\
& = & \ipcn{y}{ZK^A (Z,W) [ v^* u ] W^* x} -\ipcn{y}{Z \mbf{A} (Z) v^*}\ipcm{ u^*}{ A^\dag (W) ^*x} \nn \\
& = & \ipcn{y}{\left( K^A (Z,W) [vu^*] -vu^* +A^\dag (Z) v u^* A^\dag (W) ^* \right) x} \nn \\
& & -\ipcn{y}{(A^\dag (Z) - A ^\dag (0 _n ) ) v}\ipcm{u}{A^\dag (W)^*x} \nn \\
& = & \ipcn{y}{\left( K^A (Z,W) [vu^*] -vu^* +A^\dag (Z) v u^* A^\dag (W) ^* \right) x} \nn \\
& & - \ipcn{y}{A^\dag (Z) vu^* A^\dag (W)^*x} + \ipcn{y}{A(0_n)vu^* A^\dag (W)^* x}  \nn \\
& = & \ipcn{y}{\left( K^A (Z,W) [vu^* ] - K^A (0 _n , W) [vu^*] \right)x } \in \L (\J). \nn \ea In the above, note that $A^\dag (0_n) = A (0_n) = A(0) I_n = A_\emptyset I_n$ where $0 \in \B ^d _1 = \B ^d$, since $A^\dag$ is a free function. This proves that $X$ is a Gleason solution. 
To see that $X$ is contractive, again calculate on kernel vectors:
\ba  \| X K^A \{Z , y , v \} \| ^2  & = &  \ip{K^A \{ Z , Z^* y , v \} }{ K^A \{ Z , Z^* y , v \} } \nn \\
& &- \ip{K^A \{ Z , Z^* y , v \} }{\mbf{A}}\ipcn{A^\dag (Z)v}{y} \nn \\ 
& & - \ipcn{y}{A^\dag (Z) v} \ip{\mbf{A}}{K^A \{Z, Z^* y , v \}} + \| \mbf{A} \| ^2 \left| \ipcn{A^\dag (Z) v}{y} \right| ^2 \nn \\
& \leq & \ipcn{y}{ZK^A (Z,Z)[vv^*] Z^* y} - \ipcn{y}{(A^\dag (Z) - A(0_n) ) v} \ipcn{A^\dag (Z) v}{y} \nn \\
& & - \ipcn{y}{A^\dag (Z) v} \ipcn{(A^\dag (Z) - A(0_n)) v}{y} \nn \\
& & + (I - A (0) ^* A (0)) \ipcn{y}{A(Z) vv^* A^\dag (Z) ^* y} \nn \\
& = & \ipcn{y}{K^A (Z,Z) [vv^*] y} - \ipcn{y}{(vv^* - A^\dag (Z) vv^* A^\dag (Z) ^*) y} \nn \\
 & & -2 \ipcn{y}{A^\dag (Z) vv^* A^\dag (Z) ^* y} \nn  + \ipcn{y}{A(0_n) vv^* A^\dag (Z) ^* y} + \ipcn{A(0_n ) vv^* A^\dag (Z) ^* y}{y}  \nn \\
& & + (I - A(0) ^* A(0)) \ipcn{y}{A^\dag (Z) vv^* A^\dag (Z) ^* y} \nn \\
& = & \ipcn{y}{K^A (Z,Z) [vv^*] y} - \ipcn{y}{vv^* y} + \ipcn{y}{A(0_n) vv^* A^\dag (Z) ^* y}  \nn \\
 & &  \quad  + \ipcn{A(0_n ) vv^* A^\dag (Z) ^* y}{y} - A(0) ^* A(0) \ipcn{y}{A^\dag (Z) vv^* A^\dag (Z)^* y}. \label{referback} \ea
Observe that equality holds in the above if $\mbf{A}$ is extremal. Compare this to:
\ba & & \ip{K^A \{Z , y , v \}}{(I - K_0 ^A (K_0 ^A )^* ) K^A \{Z , y , v \}} \nn \\
& & =  \ipcn{y}{K^A (Z,Z) [vv^*] y} - \| K^A \{Z , y , v \} (0) \| ^2 \nn \\
& = & \| K^A \{Z , y , v \} \| ^2 - \left| \ipcn{y}{K^A (0_n , Z) v} \right| ^2 \nn \\
& = & \| K^A \{Z , y , v \} \| ^2 - \left| \ipcn{y}{\left( I_n - A(0_n) A^\dag (Z) ^* \right) v} \right| ^2 \nn \\
& = & \| K^A \{Z , y , v \} \| ^2 - \ipcn{y}{vv^* y} +2 \re{ \ipcn{y}{v}\ipcn{y}{A(0) A^\dag (Z)^* *} } \nn \\
& & - A(0) ^* A(0) \ipcn{y}{A^\dag (Z)^* v} \ipcn{A^\dag (Z) ^* v}{y}, \nn \ea which is the same (up to elementary manipulations) as Equation (\ref{referback}) above. This proves that $X^*X \leq I - K_0 ^A (K_0 ^A )^*$ so that $X$ is a contractive Gleason solution (which will be extremal if $\mbf{A}$ is).  
\end{proof}

\begin{thm} \label{uniqueGSthm}
Suppose that $A \in \scr{R} _d (\H ) $. Then $\mbf{A} := (L^* \otimes I_\H) A$ and $X := (L^* \otimes I_\H) | _{\scr{H} ^R (A)}$ are the unique contractive Gleason solutions for $A$ and $\scr{H} ^R (A)$, respectively.
\end{thm}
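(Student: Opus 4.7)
The existence half of the theorem is already essentially in hand: the remarks immediately preceding the statement record that $\check{X}=(L^{*}\otimes I_{\J})|_{\scr{H}^{R}(A)}$ and $\check{\mbf{A}}=(L^{*}\otimes I_{\J})A$ are contractive Gleason solutions, and by Proposition \ref{GSforHBandB} these two facts are compatible. So the actual work is uniqueness.

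My plan is to isolate the following one-line lemma: if $g_{1},\dots,g_{d}$ are free holomorphic functions on $\B^{d}_{\N}$ (with values in any fixed operator space) satisfying $\sum_{j=1}^{d}Z_{j}g_{j}(Z)=0$ for every $Z\in\B^{d}_{\N}$, then each $g_{j}\equiv 0$. This is almost immediate from uniqueness of Taylor--Taylor series: expand $g_{j}(Z)=\sum_{\alpha\in\F^{d}}Z^{\alpha}(g_{j})_{\alpha}$, which converges absolutely on $\B^{d}_{\N}$ since elements of any NC-RKHS of the type considered (in particular $\scr{H}^{R}(A)$ and its operator-valued analogues) are free holomorphic on $\B^{d}_{\N}$; then
\[
\sum_{j=1}^{d}Z_{j}g_{j}(Z)=\sum_{\beta\neq\emptyset}Z^{\beta}(g_{\beta_{1}})_{\beta\setminus\beta_{1}},
\]
where $\beta_{1}$ denotes the first letter of $\beta$. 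Since each non-empty word factors uniquely this way, equating coefficients forces $(g_{j})_{\alpha}=0$ for every $j$ and $\alpha$.

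Given the lemma, uniqueness in both cases is one step. If $X^{(1)},X^{(2)}$ are two contractive Gleason solutions for $\scr{H}^{R}(A)$, then $Y:=X^{(1)}-X^{(2)}$ sends each $f\in\scr{H}^{R}(A)$ to an element of $\scr{H}^{R}(A)\otimes\C^{d}$ satisfying $\sum_{j}Z_{j}(Y_{j}f)(Z)=0$; the lemma forces $Y_{j}f\equiv 0$ for every $f$, hence $X^{(1)}=X^{(2)}=\check{X}$. The same argument applies verbatim to $\mbf{A}$: the difference $\mbf{B}=\mbf{A}^{(1)}-\mbf{A}^{(2)}$ is an $\L(\H,\J)$-valued free function on $\B^{d}_{\N}$ (per column) with $\sum_{j}Z_{j}\mbf{B}_{j}(Z)=0$, so $\mbf{B}=0$ and $\mbf{A}^{(1)}=\mbf{A}^{(2)}=\check{\mbf{A}}$.

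As a sanity check (and a cleaner rephrasing for $X$) one can also recast the Gleason equation at the operator level: viewing the row shift $L:H^{2}(\B^{d}_{\N})\otimes\J\otimes\C^{d}\to H^{2}(\B^{d}_{\N})\otimes\J$ as an isometry with $L^{*}L=I$, the Gleason equation reads $L\circ X=(I-P_{0})|_{\scr{H}^{R}(A)}$, where $P_{0}$ is the projection onto $\J$-valued constants. Applying $L^{*}$ on the left and using $L^{*}P_{0}=0$ (constants lie in $\ker L^{*}$) gives $X=L^{*}|_{\scr{H}^{R}(A)}=\check{X}$, recovering the same conclusion. I do not anticipate a genuine obstacle; the only thing to be careful about is keeping track of the operator-valued bookkeeping so that the Taylor coefficient argument is applied in the correct space ($\J$-valued for $X$, $\L(\H,\J)$-valued for $\mbf{A}$), but both cases are governed by the same underlying uniqueness of free Taylor--Taylor series.
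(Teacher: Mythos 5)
Your proof is correct, but it takes a genuinely different route from the paper's. The paper fixes an arbitrary \emph{contractive} Gleason solution $\mbf{A}$, uses it to build a row contraction $D$ on the Herglotz space $\scr{H}^R_+(H_A)$ whose adjoint agrees with $(V^A)^*$ on $\ran{V^A}$, and then invokes the rigidity of row isometries (a row isometry admits no proper row-contractive extension) together with Lemma \ref{ClarkGSform} to force $D=V^A$ and hence $\mbf{A}=\check{\mbf{A}}$; uniqueness of $X$ is then deduced from Proposition \ref{GSforHBandB}. Contractivity is used essentially there, because that argument is imported from the commutative Drury--Arveson setting of \cite{JM}, where non-contractive (and even distinct contractive) Gleason solutions genuinely exist owing to the relations $z_iz_j=z_jz_i$. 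Your argument instead exploits the defining feature of the free setting: every nonempty word factors uniquely as (first letter)$\cdot$(tail), so the homogeneous equation $\sum_j Z_jg_j(Z)=0$ forces all Taylor--Taylor coefficients of the $g_j$ to vanish (extract them at jointly nilpotent points, exactly as in the paper's proof of Proposition \ref{CTthm}), and an element of an NC-RKHS that vanishes as a free function is the zero vector. Equivalently, and most cleanly, your operator identity $Xf=L^*L(Xf)=L^*(I-P_0)f=L^*f$ (valid once one notes the contractive inclusion $\scr{H}^R(A)\subseteq H^2(\B^d_\N)\otimes\H$, so that the Gleason equation can be read in the ambient Fock space where $L^*L=I$ and constants lie in $\ker{L^*}$) settles both cases in one line. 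What each approach buys: yours is shorter, entirely elementary, and actually proves the stronger statement that in the free setting the Gleason solution is unique with no contractivity hypothesis at all (contractivity only enters the existence half, via \cite[Proposition 4.2]{Ball-Fock}); the paper's proof is the one that survives passage to the commutative quotient, and it exhibits the link between Gleason solutions and the Clark/Cauchy-transform structure ($\Pi_A$, $V^A$) that the rest of the paper relies on. Your write-up would only benefit from making explicit that the final identification with $\check{X}$, $\check{\mbf{A}}$ uses the already-established fact that these are (contractive) Gleason solutions, and from flagging the ambient-space inclusion noted above.
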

The proof uses similar arguments to those of \cite[Section 4]{JM}.
\begin{lemma}{ (\cite[Proposition 6.2]{JMfree})} \label{ClarkGSform}
The contractive Gleason solution $\check{\mbf{A}}=(L^* \otimes I _\H ) A$ is given by the formula
\ba \check{\mbf{A}} & = & \mc{F} ^R \Pi _A  ^* [I \otimes ] _A (I -A(0)) \nn \\
& =& \mc{U} _A (V^A) ^* K_0 ^{H_A} (I-A(0)), \nn \ea where $\Pi _A = \pi _A (L)$ is the row isometry obtained from the S-GNS representation of the free Clark measure $\mu _A$.
\end{lemma}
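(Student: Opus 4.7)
The plan is to verify the lemma in two parts: showing that the two displayed right-hand-side formulas agree, and then checking that either one equals $\check{\mbf{A}}$.

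Part one is algebraic. By definition $\mc{F}^R = \mc{U}_A \mc{C}^R$, and by the right-sided analog of the construction in Subsection~\ref{CTofSGNS}, the row isometry $V^A$ is given by $V^A = \mc{C}^R \pi_A(L)(\mc{C}^R)^*$, so $(V^A)^* = \mc{C}^R \Pi_A^*(\mc{C}^R)^*$. Composing gives $\mc{U}_A(V^A)^* = \mc{F}^R \Pi_A^*(\mc{C}^R)^*$, so the two right-hand-side formulas agree provided $(\mc{C}^R)^* K_0^{H_A} = [I\otimes]_A$. This last identity holds because the reproducing kernel $K_0^{H_A}h$ at $Z=0$ coincides with the empty-word coefficient kernel $K^R_\emptyset h$, which by the defining formula $\mc{C}^R(L^\alpha \otimes h) = K^R_\alpha h$ is just $\mc{C}^R(1 \otimes h)$.

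Part two is a coefficient-matching argument. The Gleason equation $Z\mbf{A}(Z) = A^\dag(Z) - A(0)$ uniquely determines the Taylor coefficients of the $j$-th component of any Gleason solution: the coefficient of $Z^\beta$ must equal $A_{\beta^\dag j}$. Since $\check{\mbf{A}}$ is a Gleason solution by construction, it suffices to verify that $F := \mc{U}_A (V^A)^* K_0^{H_A}(I - A(0))$ satisfies the same equation.

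The Fourier coefficients of $F$ can be computed explicitly. A Cauchy-transform calculation, using the identification $K^R_{\alpha, \beta} = \mu_A((L^\alpha)^* L^\beta)$ from Section~3 and its explicit evaluation $K^R_{\emptyset, j\beta} = \tfrac{1}{2}(H^A)_{\beta^\dag j}^*$, gives the $Z^\beta$ coefficient of $(V_j^A)^* K_0^{H_A} g$ as $\tfrac{1}{2}(H^A)_{\beta^\dag j}\, g$. Applying $\mc{U}_A = M^R_{I - A^\dag(Z)}$ convolves these with the Fourier coefficients of $I - A^\dag(Z)$. Multiplying by $Z_j$ and summing over $j$, one obtains the $Z^{\gamma_0}$ coefficient of $\sum_j Z_j F_j(Z)$ for each non-empty word $\gamma_0 = j\delta$. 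The resulting expression involves the sum $\sum_{\gamma\alpha = \delta,\, \alpha \neq \emptyset} A_{\alpha^\dag}(H^A)_{\gamma^\dag j}$, which via the elementary identity $(\mu_0 \sm j)^\dag j = \mu_0^\dag$ for words $\mu_0$ starting with $j$ can be re-indexed into the form $\sum_{\gamma_0^\dag = \mu\nu,\, \mu,\nu \neq \emptyset} A_\mu (H^A)_\nu$. Finally the Cayley transform identity $(I - A) H_A = I + A$, read off at Fourier level $\gamma_0^\dag$, together with the scalar identity $(I + H_A(0))(I - A(0)) = 2I$, reduces the coefficient to $A_{\gamma_0^\dag} h$, exactly the required Gleason coefficient.

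The main technical obstacle is the combinatorial bookkeeping with the dagger involution: one must identify the prefix/suffix sums $\sum_{\gamma\alpha = \delta,\, \alpha \neq \emptyset}$ and $\sum_{\gamma_0^\dag = \mu\nu,\, \mu, \nu \neq \emptyset}$ under the correspondence $\gamma_0 = j\delta$. Once this re-indexing is in hand, the algebraic content reduces to the classical Cayley transform identity, which works essentially the same way as in the one-variable scalar case.
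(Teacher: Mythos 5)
The paper does not actually prove this lemma---it is quoted verbatim from \cite[Proposition 6.2]{JMfree} with no argument supplied here---so your self-contained derivation is necessarily a different route, and it is a sound one. Your Part one is correct: since $\mc{C}^R$ is an onto isometry, $(V^A)^*=\mc{C}^R\Pi_A^*(\mc{C}^R)^*$ and $(\mc{C}^R)^*K_0^{H_A}h = 1\otimes h=[I\otimes]_A h$ (the kernel map at $Z=0$ is the empty-word coefficient map $K^R_\emptyset$), so the two displayed expressions coincide. Your Part two also works, and its logical frame is worth emphasizing: in the free setting every nonempty word has a unique leading letter, so the equation $\sum_j Z_j(\mbf{A}_jh)(Z)=A^\dag(Z)h-A(0)h$ pins down every Taylor--Taylor coefficient $(\mbf{A}_jh)_\beta=A_{\beta^\dag j}h$ (this is precisely where the free case is \emph{easier} than the commutative Drury--Arveson case, where leading-letter decompositions are not unique and Gleason solutions are not unique). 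Your coefficient computation is right: $\ip{K^R_\beta h}{(V_j^A)^*K_0^{H_A}g}=\ip{L^{j\beta}\otimes h}{1\otimes g}_{\mu_A}=\ip{h}{\mu_A((L^{j\beta})^*)g}$, which is $\tfrac12$ times a Herglotz Fourier coefficient, and the convolution with $I-A^\dag$ followed by the factorization bijection $\gamma_0=j\gamma\alpha\leftrightarrow\gamma_0^\dag=\alpha^\dag(\gamma^\dag j)$ and the Cayley identities $(I-A)H_A=I+A$, $(I+H_A(0))(I-A(0))=2I$ collapses everything to $A_{\gamma_0^\dag}h$; I checked that this reproduces the classical identity $S^*b=(1-b)\cdot\tfrac{1}{2z}(H_b(z)-H_b(0))\cdot(1-b(0))$ when $d=1$. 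The only caution is the one you already flag: the $\dag$-indexing of the Herglotz coefficients (the paper's table is written in terms of $H_B(Z)=\sum H_\alpha Z^\alpha$, with $(H_A)_\gamma=H_{\gamma^\dag}$) and the ordering of operator coefficients under the right product $\bullet_R$ must be tracked consistently in the operator-valued case; this is routine but is where an error would most likely creep in if the details were written out.
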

Recall that $\mc{U} _A := M^R _{(I-A ^\dag (Z))}$ is the unitary multiplier of $\scr{H} ^R _+ (H_A)$ onto $\scr{H} ^R (A)$, and $V^A$ is the row isometry on $\scr{H} ^L _+ (H_A)$ defined in Subsection \ref{CTofSGNS}.
\begin{proof}{ (of Theorem \ref{uniqueGSthm})}
    Let $\mbf{A}$ be any contractive Gleason solution for $A$. Define a linear map $D^* : \scr{H} ^R _+ (H_A) \rightarrow \scr{H} ^R _+ (H_A) \otimes \C ^d$ by:
\be D^* K^{H_A} \zyv := K^{H_A} \{ Z , Z^* y , v \} + M^R _{(I -A^\dag (Z) ) ^{-1}} \mbf{A} (I - A(0) ) ^{-1} \ipcn{v}{y}. \label{rowext} \ee 
Recall $\mc{U} _A  ^* := M^R _{(I-A^\dag (Z)) ^{-1}}$, the unitary right multiplier of $\scr{H} ^R (A)$ onto $\scr{H} ^R _+ (H_A)$. By construction,
\ba & & D^* \left( K \zyv - K \{ 0 _n , y, v \} \right) = K^{H_A} \{ Z , Z^* y , v \}  \nn \\
& = & (V^A) ^* \left( K \zyv - K \{ 0 _n , y, v \} \right). \nn \ea

We claim that $D^*$ is a contraction:
\ba & & \| D^* K^{H_A} \zyv \| ^2 =  \| K^{H_A} \{ Z , Z^*y , v \} \| ^2  \nn \\
& & + \ip{ K^{H_A} \{Z , Z^* y , v \} }{\mc{U} _A ^* \mbf{A}} (I - A(0) ) ^{-1} \ipcn{v^*}{y} +c.c. \nn \\
& & + \ipcn{y}{v} (I -A(0) ^* ) ^{-1} \ip{\mbf{A}}{\mbf{A}} (I-A(0))^{-1} \ipcn{v}{y} \nn \\
& \leq & \| K^{H_A} \{ Z ,   Z^*y , v \} \| ^2 + \ip{ K^{H_A} \{ Z, Z^* y, (I-A^\dag (Z) ) ^{-1}v  \} }{\mbf{A}} (I-A(0))^{-1} \ipcn{v}{y} \nn \\
& &  +c.c. + K^{H_A} (0,0) \ipcn{y}{vv^*y},   \label{part1} \ea
where $c.c.$ denotes the complex conjugate of the previous term. 
The cross-term becomes:
\ba && \ip{K^{H_A} \{ Z, Z^* y,  (I-A^\dag (Z) ) ^{-1}v \} }{\mbf{A}} (I-A(0))^{-1} \ipcn{v}{y} \nn \\
& = & \ipcn{y}{(I-A(0_n) ) ^{-1} (A^\dag (Z) - A (0_n) ) (I -A^\dag (Z)) ^{-1} v}\ipcn{v}{y} \nn \\
& = & \frac{1}{2} \ipcn{y}{(H_A ^\dag (Z) -H_A ^\dag (0_n) ) (vv^*) y }. \nn \ea
It follows that Equation (\ref{part1} ) becomes:
\ba & & \| K^{H_A} \{ Z , Z^* y , v \} \| ^2  +\frac{1}{2} \ipcn{y}{\left( H_A ^\dag (Z) vv^* + vv^* H_A ^\dag  (Z) ^* \right) y} \nn \\
& =  & \| K^{H_A} \{ Z, y, v \} \| ^2. \nn \ea  
This proves that $D^*$ is a contractive extension of $(V^A) ^*$ so that $D$ is a row contractive extension of $V^A$  (by, for example, \cite[Lemma 2.3]{JM}). However $V^A$ is a row isometry and has no non-trivial extensions. Hence, $D = V^A$, and Equation (\ref{rowext}) and Lemma \ref{ClarkGSform} then imply that $\mbf{A} = (L^* \otimes I_\H) A =\check{\mbf{A}}$.
It follows also that $X$ is unique, by Proposition \ref{GSforHBandB}, so that $X = (L^* \otimes I_\H) | _{\scr{H} ^R (A)}$.
\end{proof}

If $B \in \scr{L} _d (\H)$, the formula is similar:
\be \mbf{B} = M^L _{(I-B(Z))} (V^B)^* K_0 ^{H_B} (I - B(0)). \label{GSforB} \ee

\section{Column Extreme}

Recall that $B \in \scr{L} _d (\H)$ or $A \in \scr{R} _d (\H)$ is said to be quasi-extreme if 
$F^2 _0 (\mu _B ) = F^2 (\mu _B )$, \emph{i.e.} if and only if 
$$ I \otimes \H \subseteq F^2 _0 (\mu _B ), $$ see \cite{JMfree} and \cite[Definition 3.19]{JM}. This concept of quasi-extreme was first introduced for contractive scalar multipliers of the Drury-Arveson space in \cite{Jury2014AC}, extended to operator-valued multipliers $b \in \scr{S} _d (\H)$ in \cite{JM}, and to the `rectangular setting' of arbitrary $b \in \scr{S} _d (\H, \J)$ in \cite{MM-dBR}. (Here $\scr{S} _d (\H, \J)$ denotes the Schur class of contractive operator-valued multipliers between vector-valued Drury-Arveson spaces.) The main result of \cite{JMqe} shows that a more descriptive name for this property could be \emph{column extreme} (CE), and we will use this new terminology for the remainder of this paper.

\begin{defn}
    A Schur class $B \in \scr{L} _d (\H, \J)$ is \emph{column extreme} (CE) if there is no non-zero 
$A \in \scr{L} _d (\H , \J)$ so that the column:
$$ \bpm B \\ A \epm \in \scr{L} _d (\H , \J \otimes \C ^2 ), $$ is also Schur class. Column extreme for the right Schur class is defined analogously.
\end{defn}
\begin{remark}
Observe that the definition of column extremity can be recast as follows: $B$ is column extreme if and only if the only multiplier $A$ satisfying the inequality
\begin{equation}
M^{L*}_AM^L_A\leq I-  M^{L*}_BM^L_B
\end{equation}
is $A=0$. The existence of such $A$ for given $B$ was considered by Popescu \cite{PopEntropy}, who showed that a nonzero $A$ exists if and only if $e(I-  M^{L*}_B M^L_B)>-\infty$, where $e(\cdot)$ is the so-called {\em entropy} of a multi-analytic Toepliz operator as defined in  \cite{PopEntropy}. However, it seems to be difficult to compute the entropy for arbitrary $B$ (or even to decide if it is finite or not). Regarding the equivalences in Theorem~\ref{CEthm} below, it is not hard to see from the definition of the entropy invariant, that $e(I-  M^{L*}_B M^L_B)=-\infty$ is equivalent to our condition (5), so that the equivalence of (1) and (5) is essentially contained in \cite[Corollary 1.2]{PopEntropy}. 
\end{remark}

In this general `rectangular' setting, it will often be convenient to consider the \emph{square completion} $[B]$, of $B$: The above column-extreme property is clearly invariant under conjugation by isometries; a given $B \in \scr{L} _d (\H, \J)$ is CE if and only if $B' = V B W^*$ is CE, where $W: \H \rightarrow \H '$, and $V:\J \rightarrow \J '$ are fixed onto isometries.
It follows that we can assume, without loss of generality, that $\H \subseteq \J$ or $\J \subsetneq \H$, and complete $B$ to a `square' $[B] \in \L _d (\J)$ or in $\L _d (\H)$, respectively by adding columns or rows of zeros:
$$ [B] := \left\{ \begin{array}{cc} \bbm B & 0 _{\J \ominus \H} \ebm &  \H \subsetneq \J \\
\bbm B \\ 0 _{\H \ominus \J} \ebm & \J \subsetneq \H. \end{array} \right. $$

\begin{remark}
Observe that if $\H \subsetneq \J$ then $\scr{H} ^L (B) = \scr{H} ^L ([B])$ so that the unique contractive Gleason solution for $B$ is given by $[\mbf{B} ] | _{\H}$, where $[\mbf{B} ]$ is the unique contractive Gleason solution for $[B]$. It is clear that $\mbf{B}$ is extremal if and only if $[\mbf{B}]$ is extremal in this case.

In the second case where $\J \subsetneq \H$ we have that 
$\scr{H} ^L ([B] ) = \scr{H} ^L (B) \bigoplus \left(F^2 _d \otimes (\H \ominus \J) \right)$, and the unique contractive Gleason solution for $[B]$ is given by:
$$ [\mbf{B} ] = (L^* \otimes I_\H) \bpm B \\ 0 _{\H \ominus \J }  \epm = \bpm \mbf{B} \\ \mbf{0} _{\H \ominus \J} \epm, $$ where $\mbf{0} _{\H \ominus \J } : \H \ominus \J \rightarrow \scr{H} ^L (B) \otimes \C ^d$ maps everything to the zero element. In this case it is clear that 
$$ \mbf{B} = \bpm I, & 0 \epm [\mbf{B} ], $$ and it follows as before that $\mbf{B}$ is extremal if and only if $[\mbf{B}]$ is extremal.
\end{remark}

\begin{thm} \label{CEthm}
Given $B \in \scr{L} _d (\H , \J)$, the following are equivalent:
\bn
    \item $B$ is column extreme.
    \item The unique contractive Gleason solution $\mbf{B} = R^* B : \H \rightarrow \scr{H} ^L (B) \otimes \C ^d$, for $B$ is extremal.
    \item The unique contractive Gleason solution $X = R^* | _{\scr{H} ^R (B)}$ for $\scr{H} ^R (B)$ is extremal, and $\H = \mr{supp} (B)$.
    \item $K_0 ^{H_{[B]}} (I - B(0))  \H \subseteq \ran{V^{[B]}}$.
    \item $B$ has the Szeg\"{o} extremal property: $I \otimes (I-B(0)) \H \subseteq F^2 _0 (\mu _{[B]} )$.
    \item There is no non-zero $\H$-valued constant function $H\equiv h \in \scr{H} ^L _+ (H_{[B]})$.
    \item There is no non-zero $h \in \H$ so that $Bh \in \scr{H} ^L (B)$. 
\en
If $B =[B]$ is square, then the above are equivalent to:
\bn
    \item[(8)] $\pi _{\mu _B } (L)$ (equivalently $V^B$) is a Cuntz row isometry.
\en
\end{thm}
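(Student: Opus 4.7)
The plan is to prove the equivalences in a cycle, using the Gleason-solution formulation and the Clark/Cauchy-transform picture of the Herglotz space. First reduce to the square case $B = [B]$: by the two remarks preceding the theorem, conditions (1)--(3) and (7) are invariant under the square completion, while (4)--(6) already involve $[B]$. So assume $\H = \J$.

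The backbone is the chain (2) $\Leftrightarrow$ (4) $\Leftrightarrow$ (5) $\Leftrightarrow$ (6). Starting from $\mbf{B} = \mc{U}_B (V^B)^* K_0^{H_B}(I - B(0))$ (Lemma \ref{ClarkGSform}), the Cayley identity
\[
\re{H_B(0)} = (I - B(0)^*)^{-1}(I - B(0)^*B(0))(I - B(0))^{-1}
\]
yields $\|K_0^{H_B}(I - B(0))h\|^2 = \|h\|^2 - \|B(0)h\|^2$. Since $\mc{U}_B$ is unitary and $V^B$ a row isometry, extremality of $\mbf{B}$ becomes $K_0^{H_B}(I - B(0))\H \subseteq \ran{V^B}$, i.e. (4). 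Then (4) $\Leftrightarrow$ (5) follows since $\mc{C}^L: F^2(\mu_B) \to \scr{H}^L_+(H_B)$ is an isometric isomorphism intertwining $\pi_B(L)$ with $V^B$ and sending $I \otimes h$ to $K_0^{H_B}h$. Finally, invertibility of $I - B(0)$ (Remark \ref{HergSmir}) reduces (5) to $F^2(\mu_B) = F^2_0(\mu_B)$, i.e. $\pi_B(L)$ (equivalently $V^B$) is Cuntz --- this is (8), and by Proposition \ref{CTthm} the triviality of $\ker{(V^B)^*}$, identified with the constant $\H$-valued elements of $\scr{H}^L_+(H_B)$, is exactly (6).

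For (2) $\Leftrightarrow$ (3), apply Proposition \ref{GSforHBandB} together with the observation that extremality of $\mbf{B}$ already forces $\mr{supp}(B) = \H$: if $h \perp \mr{supp}(B)$ then $B(Z)h \equiv 0$, so $\mbf{B}h = 0$, and extremality collapses $\|h\|^2 - \|B(0)h\|^2$ to zero. For (6) $\Leftrightarrow$ (7), the unitarity of $\mc{U}_B = M^L_{I-B}: \scr{H}^L_+(H_B) \to \scr{H}^L(B)$ identifies constants in the Herglotz space with elements $(I-B)h \in \scr{H}^L(B)$; combining this with $Bh = h - (I-B)h$ and the identity $\mbf{B}h = R^*(Bh) \in \scr{H}^L(B) \otimes \C^d$ (from uniqueness of the contractive Gleason solution, Theorem \ref{uniqueGSthm}), one shows that any nonzero witness of (not (7)) forces $\|Bh\|^2_{\scr{H}^L(B)} = \|h\|^2$ in tension with the extremality characterization supplied by (6), and conversely constructs $(I - B)h \in \scr{H}^L(B)$ from any witness of (not (7)).

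The main obstacle is (1) $\Leftrightarrow$ (7), which bridges the multiplier-algebraic definition of column extremity and the intrinsic de Branges--Rovnyak condition. For (not (7)) $\Rightarrow$ (not (1)), given $h \ne 0$ with $Bh \in \scr{H}^L(B)$ one builds a nonzero multiplier $A$ with $\bpm B \\ A \epm$ Schur via a noncommutative Leech-type factorization of $I - (M^L_B)^* M^L_B$, using $Bh$ to produce an outer factor. Conversely, a nonzero column-extending $A$ yields such an $h$ by examining the Gleason solution of $\bpm B \\ A \epm$ and comparing with $\mbf{B}$. Alternatively, as flagged in the remark preceding the theorem, the existence of such $A$ is equivalent to $e(I - (M^L_B)^* M^L_B) > -\infty$ via Popescu's entropy characterization \cite[Corollary 1.2]{PopEntropy}, which is equivalent to the negation of (5); by the chain above this gives (1) $\Leftrightarrow$ (7).
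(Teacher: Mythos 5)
Your outline of the chain $(2)\Leftrightarrow(3)$, $(2)\Leftrightarrow(4)\Leftrightarrow(5)\Leftrightarrow(6)$ (and $(8)$ in the square case) matches the paper's route and is essentially correct. However, there are two genuine problems. First, the opening reduction to the square case is not valid for all of the conditions: when $\H\subsetneq\J$ the square completion is $[B]=\bbm B & 0_{\J\ominus\H}\ebm\in\scr{L}_d(\J)$, which annihilates $\J\ominus\H$, so $\mr{supp}([B])\subsetneq\J$ and $[B]$ is \emph{never} column extreme (one can always adjoin a nonzero column supported on $\J\ominus\H$); hence ``$B$ is CE iff $[B]$ is CE'' fails, and likewise the extremality of $[\mbf{B}]$ as a Gleason solution for $[B]$ fails identically on $\J\ominus\H$. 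This is why the paper keeps the rectangular structure throughout and states $(4)$--$(6)$ only for the restriction to $\H$ (e.g.\ $K_0^{H_{[B]}}(I-B(0))\H\subseteq\ran{V^{[B]}}$, not all of $\K$). Your $(6)\Leftrightarrow(7)$ sketch is also off: the correct bridge (Lemma \ref{confunLem}) is the computation $(K_0^B f)(Z)=(I_n-B(Z))f+B(Z)h$ with $f=(I-C(0)^*)^{-1}h$, identifying the constant $f$ in the Herglotz space with $K_0^Bf-Bh$ under $\mc{U}_B$; the asserted identity $\|Bh\|^2_{\scr{H}^L(B)}=\|h\|^2$ plays no role and is not correct.

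The more serious gap is the hard implication, which in your scheme is (not $(7)$) $\Rightarrow$ (not $(1)$) and in the paper is $(2)\Rightarrow(1)$. Saying one ``builds a nonzero multiplier $A$ via a noncommutative Leech-type factorization of $I-(M^L_B)^*M^L_B$, using $Bh$ to produce an outer factor'' is not an argument: a single vector $h$ with $Bh\in\scr{H}^L(B)$ gives membership of one function in the de Branges--Rovnyak space, not a factorization of the defect operator, and no Leech-type theorem is invoked or available here in the form you need. The paper's actual construction is the crux of the theorem: from non-extremality one forms $a_\emptyset\geq 0$ with $a_\emptyset^2=I-A(0)^*A(0)-\mbf{A}^*\mbf{A}\neq 0$, proves via Lemma \ref{NCRKHSfact} that $Aa_\emptyset$ maps $\H$ into $\scr{H}^R(A)$ and that $I-X^*X=K_0^A(K_0^A)^*+Aa_\emptyset(Aa_\emptyset)^*$, and then augments the canonical de Branges--Rovnyak colligation by the row $\bbm -(Aa_\emptyset)^* & a_\emptyset\ebm$ to obtain an \emph{isometric} colligation whose Ball--Fock transfer function is the column $\bpm A\\ a\epm$ with $a\neq 0$. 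None of this is present in your proposal. The fallback via Popescu's entropy theorem could in principle substitute for this step (as the paper's remark notes for the equivalence of $(1)$ and $(5)$), but you neither verify that $e(I-M_B^{L*}M_B^L)=-\infty$ is equivalent to $(5)$ nor check that the cited result applies in the operator-valued rectangular setting, so as written the central equivalence remains unproved.
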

In the above, recall that $F^2 _0 (\mu _B) = \bigvee _{\alpha \neq \emptyset} L^\alpha \otimes I_\H \subseteq F^2 (\mu _B)$, and the support of $B$ was defined in Equation (\ref{support}).

\begin{remark}
In the classical (single-variable, scalar-valued) setting, the equivalent statements in the above theorem recover several characterizations of extreme points of
the Schur class, $\scr{S}$, of contractive analytic functions on $\D$:
\begin{thm*}
Given $b \in \scr{S}$, the following are equivalent:
\bn
    \item[(0)] $b$ is an extreme point.
    \item $b$ is column extreme.
    \item $S^*b$ is extremal, \emph{i.e.} $\|S^* b \| _{\scr{H} (b)} = 1 - |b(0)| ^2$.
    \item $X := S^* | _{\scr{H} (b)}$ is extremal, \emph{i.e.}, $X^* X = I - k_0 ^b (k_0 ^b) ^*$.
    \item[(5)] $H^2 (\mu _b) = H^2 _0 (\mu _b)$. 
    \item[(7)] $b$ does not belong to $\scr{H} (b)$.
    \item[(8)] All Clark perturbations of $S^* | _{\scr{H} (b)}$ are unitary.
\en
\end{thm*}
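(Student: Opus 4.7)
The plan is to observe that conditions (1), (2), (3), (5), (7) of the classical theorem are exactly the scalar single-variable specializations ($d=1$, $\H=\J=\C$) of the corresponding items in Theorem~\ref{CEthm}, so their mutual equivalence is immediate from that theorem. What remains is to incorporate conditions (0) and (8), which have no direct counterpart in the generic operator-valued free version.

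For the equivalence (0) $\Leftrightarrow$ (1), I would argue as follows. Column extremity of $b$ means that the only $a \in H^\infty(\D)$ with $M_a^*M_a + M_b^*M_b \leq I$ is $a=0$; since multiplier norms on $H^2(\D)$ agree with the $L^\infty(\T)$ norms of boundary values, this is exactly the assertion that no non-zero $a \in H^\infty(\D)$ satisfies $|a|^2 \leq 1 - |b|^2$ a.e.\ on $\T$. By Szeg\"{o}'s theorem (outer factorization), a non-zero such $a$ exists if and only if $\log(1-|b|^2) \in L^1(\T)$. Thus $b$ is column extreme if and only if $\log(1-|b|^2) \notin L^1(\T)$, which is precisely the classical de Leeuw--Rudin criterion for $b$ to be an extreme point of $[H^\infty(\D)]_1$.

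For (3) $\Leftrightarrow$ (8): the Clark perturbations of $X := S^*|_{\scr{H}(b)}$ are a canonical one-parameter family of rank-one modifications of $X$ obtained from the defect space. By elementary rank-one dilation theory, this family lies in the unitary group precisely when the defect $I - X^*X$ equals the rank-one projection $k_0^b (k_0^b)^*$, that is, exactly when (3) holds; in the strictly contractive case $X^*X < I - k_0^b (k_0^b)^*$ the Clark perturbations remain merely co-isometric. Conversely, if every Clark perturbation is unitary, it is in particular an isometry, which forces $X^*X = I - k_0^b(k_0^b)^*$. (In the inner case this recovers Clark's original theorem; the non-inner extreme case is the generalization due to Ball and Sarason.)

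The main obstacle is establishing (0) $\Leftrightarrow$ (1). The remaining equivalences are either direct specializations of Theorem~\ref{CEthm} or standard facts from rank-one perturbation theory, but linking column extremity of $b$ directly to extremity of $b$ in the unit ball of $H^\infty(\D)$ genuinely passes through Szeg\"{o} factorization and the de Leeuw--Rudin theorem; this classical input has no analogue in the multivariable or free setting and is the one place where single-variable scalar function theory is essentially used.
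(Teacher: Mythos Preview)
The paper does not actually prove this classical theorem; it is stated inside a Remark as a summary of known single-variable results, with citations to Sarason, the Szeg\"{o}--Kolmogoroff--Kre\u{\i}n theorem, and the Clark perturbation literature. So there is no ``paper's own proof'' to compare against in the strict sense. Your plan---specialize Theorem~\ref{CEthm} to $d=1$, $\H=\J=\C$ for items (1)--(7), and handle (0) and (8) separately by classical means---is exactly the route the Remark implicitly suggests, and it is sound.

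Your argument for $(0)\Leftrightarrow(1)$ via Szeg\"{o} outer factorization and the de~Leeuw--Rudin log-integrability criterion is correct and is the standard way to see this. (Note that the paper's Corollary~\ref{extremecor} already gives $(1)\Rightarrow(0)$ in full generality; only $(0)\Rightarrow(1)$ genuinely requires the one-variable input.)

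Your treatment of $(3)\Leftrightarrow(8)$ is the one place that is too sketchy. Calling $k_0^b(k_0^b)^*$ a ``rank-one projection'' is inaccurate (its trace is $\|k_0^b\|^2=1-|b(0)|^2$, not $1$), and the appeal to ``elementary rank-one dilation theory'' hides the actual mechanism: you have not said what the Clark perturbations concretely are, so the claim that unitarity of \emph{all} of them forces $X^*X = I - k_0^b(k_0^b)^*$ is an assertion, not an argument. A cleaner route, available directly from the paper, is this: by the Clark intertwining formula in the final subsection, each Clark perturbation of $X=S^*|_{\scr{H}(b)}$ is unitarily equivalent (via the weighted Cauchy transform) to the adjoint of the GNS isometry $\Pi^{b\ov{\alpha}}$ on $F^2(\mu_{b\ov{\alpha}})$, hence is always a co-isometry; it is unitary precisely when $\Pi^{b\ov{\alpha}}$ is onto. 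In the square scalar case this is exactly item~(8) of Theorem~\ref{CEthm}, already shown there to be equivalent to (1)--(7). That closes the loop without an ad hoc perturbation argument.
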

Conditions $(1)$ and $(2)$ are characterizations of extreme points of $\scr{S}$ which follow from results of Sarason, \cite[Chapter III, Chapter IV]{Sarason-dB}. The spaces $F^2 (\mu _B)$, and $F^2 _0 (\mu _B)$ are multi-variable non-commutative analogues of the spaces $H^2 (\mu _b)$, and $H^2 _0 (\mu _b)$, the closure of the analytic polynomials and closed span of the non-constant analytic monomials (\emph{resp}.) in $L^2 (\mu _b)$, for $b \in \scr{S}$. In the classical setting the condition that $H^2 _0 (\mu _b) = H^2 (\mu _b )$ is equivalent to $b$ being an extreme point of the Schur class.  This follows from the Szeg\"{o}-Kolmogoroff-Kre\u{\i}n distance formula for the distance from the constant function $1$ to $H^2 _0 (\mu _b)$, the formula for the Radon-Nikodym derivative of $\mu _b$ with respect to normalized Lebesgue measure, and other classical facts, see \cite[Chapter 4, Chapter 9]{Hoff}, and the discussion in \cite[Section 1]{JMfree}. Item $(7)$ is again a result of Sarason \cite[Chapter IV]{Sarason-dB}, and the final item is equivalent to the well-known fact that $b$ is extreme if and only if all of the Clark perturbations of the restricted backward shift $S^* | _{\scr{H} (b)}$ are unitary, see \emph{e.g.} \cite{Martin-dB}. Corollary \ref{extremecor} will prove that any column-extreme $B \in \scr{L} _d$ or $\scr{R} _d$ is necessarily an extreme point, whether the converse holds is an open problem.
\end{remark}

We will need the following free or NC analogue of a result from vector-valued RKHS theory, \cite[Theorem 10.17]{Paulsen-rkhs} (see the proof of \cite[Proposition 5.1]{JMqe}):
\begin{lemma} \label{NCRKHSfact}
    Let $\H _{nc} (K)$ be a vector-valued NC-RKHS on a NC set $\Om$. A (vector-valued) free NC function $f$ on $\Om$ belongs to
$\H _{nc} (K)$ if and only if $\la ^2 K(Z,W) - f(Z) (\cdot ) f(W) ^* \geq 0$ is a (operator-valued) CPNC Kernel on $\Om$ for some $\la ^2 >0$. The norm
of $f$ is the infimum of all such $\la$.
\end{lemma}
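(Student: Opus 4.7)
The plan is to mimic the classical Aronszajn--Moore membership criterion (for a scalar RKHS, $f \in \H (K)$ iff $\la ^2 K(z,w) - f(z) \overline{f(w)} \geq 0$ for some $\la > 0$, with $\|f\|$ equal to the infimum of such $\la$), using the NC kernel maps $K_Z$ and the CPNC formalism of \cite{BMV}. Both directions hinge on the adjoint relation $K_Z^* g = g(Z)$ together with the characterization of a CPNC kernel as a Gram-like positivity condition tested against direct sums of points.

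For the forward direction, suppose $f \in \H _{nc} (K)$ and $\| f \|_K \leq \la$. The bounded operator inequality $\la^2 I - f f^* \geq 0$ on $\H _{nc} (K)$ is, by definition, equivalent to $\| f \| \leq \la$. Compressing this inequality between the kernel maps $K_{Z_i}$ at an arbitrary finite collection of points $Z_1, \ldots, Z_k$ yields a positive block matrix whose $(i,j)$-entry, evaluated on rank-one matrices $v_i v_j^*$, is exactly $\la^2 K(Z_i, Z_j)[v_i v_j^*] - f(Z_i) v_i v_j^* f(Z_j)^*$. Because the kernel maps $K_Z$ respect direct sums and intertwinings, this block-positivity is precisely the CPNC property of $K^f (Z,W)[\cdot] := \la^2 K(Z,W)[\cdot] - f(Z) (\cdot) f(W)^*$.

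For the converse, assume $K^f$ is CPNC for some $\la > 0$. I would define a linear functional $\Phi$ on the dense span of kernel vectors in $\H _{nc} (K)$ by
$$ \Phi \bigl( K_Z (yv^*) \bigr) := \ipcn{y}{f(Z) v}, $$
and the CPNC positivity of $K^f$, unpacked after testing against finite linear combinations, yields a Cauchy--Schwarz-type bound $|\Phi (\eta)|^2 \leq \la^2 \| \eta \|_{\H _{nc}(K)}^2$. Riesz representation then produces a unique $g \in \H _{nc} (K)$ with $\|g\| \leq \la$ implementing $\Phi$; since $g(Z) = K_Z^* g = f(Z)$ by construction, we conclude $f = g \in \H _{nc} (K)$ with $\|f\| \leq \la$. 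Taking the infimum of admissible $\la$ in both directions yields the asserted norm identity.

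The main obstacle is the technical verification that CPNC positivity of $K^f$ at a tuple $(Z_i)$ delivers the scalar Cauchy--Schwarz bound on $\Phi$. This requires carefully choosing matrix arguments in the CPNC condition so as to extract the positive semidefinite Gram matrix whose diagonal entries bound $\|\eta\|^2$ and whose off-diagonal entries reproduce $\Phi$. Once this block-matrix reduction is executed, the argument is essentially the free analogue of \cite[Theorem 10.17]{Paulsen-rkhs}, carried out in the multi-variable commutative setting in \cite[Proposition 5.1]{JMqe}; the NC grading and intertwining hypotheses ensure that the resulting representative $g$ really is a free NC function agreeing with $f$ globally.
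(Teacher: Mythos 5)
Your argument is correct and is precisely the free analogue of the Aronszajn--Moore/Paulsen membership criterion that the paper itself invokes: the paper gives no proof of this lemma, only the citations to \cite[Theorem 10.17]{Paulsen-rkhs} and \cite[Proposition 5.1]{JMqe}. Both directions of your sketch --- compressing $\la ^2 I - ff^*$ by the kernel maps $K_Z$ for the forward implication, and using CPNC positivity to obtain a Cauchy--Schwarz bound on the evaluation functional followed by Riesz representation for the converse --- constitute exactly the standard route intended there.
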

\begin{lemma} \label{confunLem}
    If $B \in \scr{L } _d (\H, \J)$ and $C := [B] \in \scr{L } _d (\K)$, then $Bh \in \scr{H} ^L (B)$ for $h \in \H$ if and only if $F \equiv (I - C(0) ^* ) ^{-1} h \in \H$ is a constant function in $\scr{H} ^L _+ (H_C)$. 
\end{lemma}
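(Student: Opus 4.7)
My plan is to derive an identity in $\scr{H}^L(C)$ expressing $Ch$ as a kernel vector at the origin minus the $\mc{U}_C$-image of the constant function $g := (I-C(0)^*)^{-1}h$, and then invoke the unitary multiplier $\mc{U}_C = M^L_{I-C} : \scr{H}^L_+(H_C)\to \scr{H}^L(C)$.

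First I would reduce from $B$ to $C$: when $\H \subsetneq \J$, a direct computation with the block form $C=[B,0]$ gives $K^C = K^B$, hence $\scr{H}^L(C) = \scr{H}^L(B)$ and $Ch = Bh$ (viewing $h\in\H$ embedded in $\J$); when $\J \subsetneq \H$, we have $\scr{H}^L(C) = \scr{H}^L(B)\oplus (F^2_d\otimes (\H\ominus\J))$ with $Ch$ corresponding to $(Bh)\oplus 0$, so that $Ch \in \scr{H}^L(C)$ if and only if $Bh\in \scr{H}^L(B)$.

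The key computational step is the explicit formula for the kernel vector $K^C_0(g_0)\in \scr{H}^L(C)$ for $g_0\in \K$. Using the reproducing property, the definition
$$K^C(Z,W)[P] = K(Z,W)[P]\otimes I_\K - C(Z)(K(Z,W)[P]\otimes I_\K)C(W)^*,$$
and the fact that $K(Z,0)[P] = P$ (only the $\alpha = \emptyset$ term survives at $W=0$ in the free Szeg\"o expansion), I would derive $K^C_0(g_0)(Z) = I_n\otimes g_0 - C(Z)(I_n\otimes C(0)^*g_0)$, which in free-function shorthand reads
$$K^C_0(g_0) = g_0 - C\cdot(C(0)^*g_0),$$
where $g_0$ and $C(0)^*g_0$ are the corresponding constant functions and $C\cdot k$ denotes the free function $Z\mapsto C(Z)(I_n\otimes k)$. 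Specializing to $g := (I-C(0)^*)^{-1}h$, which is well-defined since $I-C$ is outer and hence $I-C(0)^*$ is invertible by Remark \ref{HergSmir}, the identity $(I-C(0)^*)g = h$ yields $C(0)^*g = g - h$, and substitution gives
$$K^C_0(g) = g - C(g-h) = (I-C)g + Ch, \qquad \text{equivalently}\qquad Ch = K^C_0(g) - (I-C)g.$$

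To conclude, $K^C_0(g)$ always belongs to $\scr{H}^L(C)$, so $Ch \in \scr{H}^L(C)$ if and only if $(I-C)g \in \scr{H}^L(C)$. Since $\mc{U}_C$ is unitary onto, if $(I-C)g \in \scr{H}^L(C)$ then $(I-C)g = \mc{U}_C F$ for some $F\in \scr{H}^L_+(H_C)$, i.e.\ $(I-C)F = (I-C)g$ as free functions; pointwise invertibility of $I-C$ on $\B^d_\N$ (again Remark \ref{HergSmir}) forces $F = g$, giving $g \in \scr{H}^L_+(H_C)$. Conversely, if the constant $g$ lies in $\scr{H}^L_+(H_C)$ then $\mc{U}_C g = (I-C)g \in \scr{H}^L(C)$. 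Combined with the first reduction this proves the lemma. The main obstacle I anticipate is the tensor-product bookkeeping in the kernel vector computation, particularly the correct interpretation of $C(Z)$ acting on a constant $\K$-valued free function; once this is unwound the rest is pure algebra.
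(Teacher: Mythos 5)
Your argument is correct and takes essentially the same route as the paper: the identity $K_0^C(g) = (I-C)g + Ch$ for $g = (I-C(0)^*)^{-1}h$ is exactly the paper's key computation, followed by the same appeal to the unitary multiplier $\mc{U}_C = M^L_{(I-C)}$ together with pointwise invertibility of $I-C$. The only difference is organizational: you reduce to the square completion $C$ at the outset via the orthogonal direct-sum decomposition of $\scr{H}^L(C)$, whereas the paper computes with $B$ directly and invokes Lemma \ref{NCRKHSfact} to handle the rectangular bookkeeping in the reverse direction.
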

\begin{proof}
    If $Bh \in \scr{H} ^L (B)$ set $f:= (I - C(0) ^* ) ^{-1} h \in \H$. Then,
\ba (K_0 ^B f ) (Z) & = &I_n \otimes f - B(Z) (I_n \otimes B(0) ^* (I - C(0) ^* ) ^{-1} h) \nn \\
& = & I_n \otimes f - B(Z) C(0) ^* (I - C(0) ^* ) ^{-1} h \nn \\
& = & I_n \otimes f - B(Z)(I -C(0) ^* ) ^{-1} h + B(Z)h \nn \\
& = & (I_n - B(Z) )f + B(Z) h. \nn \ea
Since $B(Z) h = B(Z) (I_n \otimes h) \in \scr{H} ^L (B)$, we conclude that $F \equiv f \in \H$ belongs to $\scr{H} ^L _+ (H_C)$ (it is the image of $(I - B) f$ under the canonical unitary multiplier). 

This argument is reversible: if $H \in \scr{H} ^L _+ (H_C)$ is such that $H\equiv h \in \H$, \emph{i.e.} 
$H(Z) = I_n \otimes h$, then $M^L _{(I-B(Z))} H = F \in \scr{H} ^L (C)$, and also
$$ K_0 ^B (Z) h = I_n \otimes h - B(Z) (I_n \otimes B(0) ^* h), $$ so that 
$$ (K_0 ^B - F) (Z) = B(Z) (I -B(0) ^* ), $$ and we conclude that $B (I-C(0) ^* ) h =B(I-B(0) ^* ) h\in \scr{H} ^L (C)$. If $\J \supseteq \H$ so that $[B] \in \scr{L} _d (\J )$, then $\scr{H} ^L (C) = \scr{H} ^L (B)$. Otherwise if $\H \supsetneq \J$ then $\scr{H} ^L (C) = \scr{H} ^L (B) \oplus F^2 _d \otimes (\H \ominus \J)$. Since $C(Z) h = B(Z) h$ for any $h \in \H$, and $B(Z) \in \C ^{n\times n} \otimes \L (\H , \J)$, Lemma \ref{NCRKHSfact} implies that there is a $ \la ^2 > 0$ so that
\ba  & & \bpm B(Z) g (B(W) g ) ^* & 0 \\ 0 & 0 _{\H \ominus \J} \epm = C(Z)g (C(W)g)^* \nn \\
& \leq & \la ^2 K^C (Z,W)  = \la ^2 \bpm K^B (Z,W) & 0 \\ 0 & K(Z,W) \otimes (\H \ominus \J ) \epm, \nn \ea where $g:=(I-B(0) ^* ) h = (I-C(0)^*) h \in \H$. Comparing top left entries, Lemma \ref{NCRKHSfact} again implies that $B(Z) (I-B(0)^*)h =Bg  \in \scr{H} ^L (B)$.
\end{proof} 
The proof of equivalence of the first two items is the most involved, so we will first establish the equivalence of the remaining items. 
\begin{proof}{ (of equivalence of items $(2)-(10)$ in Theorem \ref{CEthm})}
To simplify notation, we will write $C =[B] \in \scr{L} _d (\K)$ (with $\K = \H$ or $\J$).
\paragraph{$(2) \Leftrightarrow (3)$} This was proven as part of Proposition \ref{GSforHBandB} which relates Gleason solutions for $\scr{H} ^L (B)$ and Gleason solutions for $B$. Also note that if $\mr{supp}(B) \subsetneq \H$, then $B$ will have a matrix representation of the form:
$$ \bpm B ' , & 0 \epm, $$ so that for any Schur $A ' \in \scr{L} _d $, 
$$ \bpm B' & 0 \\ 0 & A ' \epm =: \bpm B \\ A \epm \in \scr{L} _d (\H \oplus \C, \J \oplus \C ), $$ is Schur and $B$ is not CE in this case.

\paragraph{$(2) \Leftrightarrow (4)$} There are two cases to consider. If $C=[B] \in \scr{L} _d (\H)$ then $\scr{H} ^L (B) \oplus F^2 _d \otimes (\H \ominus \J) = \scr{H} ^L (C)$ and $\mbf{B}$ is extremal if and only if $\mbf{C}$ is. If $\mbf{C}$ is extremal then
\ba I - C(0) ^* C(0) & = & \mbf{C} ^* \mbf{C}  \nn \\
& = & (I - C(0) ^*) (K_0 ^{H_C}) ^* V^C  (V^C) ^* K_0 ^{H_C} (I -C(0)) \nn \\ 
& \leq & (I - C(0) ^*) K^{H_C} (0 , 0 ) (I - C(0) ) \nn \\
& = & I - C(0) ^* C(0), \nn \ea and this happens if and only if $V^C$ is a co-isometry. This establishes the equivalence in this case. Alternatively, if $\H \subseteq \J$ then $\mbf{B} = \mbf{C} | _\H$ so that $\mbf{B}$ will be extremal if and only if 
\ba I - B(0) ^* B(0) & = &  \mbf{B} ^* \mbf{B} \nn \\
& = & P_\H \mbf{C} ^* \mbf{C} P_\H \nn \\
& = & P_\H (I - C(0) ^*) (K_0 ^{H_C}) ^* V^C  (V^C) ^* K_0 ^{H_C} (I -C(0)) P_\H \nn \\
& \leq & P_\H (I - C(0) ^* C(0) ) P_\H = I - B(0) ^* B(0). \nn \ea  The above holds if and only if $K_0 ^{H_C} (I - C(0)) \H = K_0 ^{H_C} (I -B(0)) \H \subseteq \ran{V^C}$, and this proves the equivalence in the second case.

\paragraph{$(4) \Leftrightarrow (5)$} This follows immediately from the fact that the Cauchy transform $\mc{C} ^L : F^2 (\mu _C) \rightarrow \scr{H} ^L _+ (H_C)$ is an onto isometry which intertwines $\Pi ^C = \pi _C (L)$ and $V^C$, which takes $F^2 _0 (\mu _C)$ onto $\ran{V^C}$, and which maps $I \otimes g \in F^2 (\mu _C)$ to $K_0 ^{H_C} g$, see Proposition \ref{CTthm}.

\paragraph{$(4) \Rightarrow (6)$} Assume that $(6)$ does not hold so that there is a constant $\H-$valued function $H \equiv h \in \H$ in $\scr{H} ^L _+ (H_C)$. Set $f := (I - B(0) ^* ) h \in \H$ so that $h = (I - C(0) ^* ) ^{-1} f$. If $(4)$ holds then $K_0 ^{H_C} (I - C(0)) f \in \ran{V^C} \perp H$ so that 
$$ 0 = \ip{K_0 ^{H_C} (I - B(0) ) f }{H} = \ip{(I - C(0)) f}{(I-C(0) ^* ) ^{-1} f} = \| f \| ^2. $$ We conclude that $f =0$ so that $h =0$ (since $B(0) ^*$ is a pure contraction). This shows that $(4)$ cannot hold.

\paragraph{$(6) \Leftrightarrow (7)$} This equivalence is an immediate consequence of Lemma \ref{confunLem}.

\paragraph{$(7) \Rightarrow (4)$} Our proof will be a bit circuitous: First consider the following condition $(4)'$: If $F \in \scr{H} _+ ^L (H_C)$ is constant valued, $F \equiv f \in \K$, then 
$$ P_\H (I - B(0) ^* ) f =0. $$ We claim that $(4)' \Rightarrow (4)$. Condition $(4)'$ implies that $$ P_\H (I- B(0) ^*) (K_0 ^{H_C})^* (I - V V ^*) =0, $$ and taking the adjoint of this expression gives:
$$ (I - VV^* ) K_0 ^{H_C} (I - B(0) ) P_\H =0, $$ which is condition $(4)$. It remains to show that $(7) \Rightarrow (4)'$, and this will be accomplished by demonstrating the contrapositive. If $(4)'$ does not hold then there is a $f \in \K$ so that $F (Z) = I_n \otimes f$ belongs to $\scr{H} _+ ^L (H_C)$, and $P_\H (I - B(0) ^* ) f \neq 0$. By Lemma \ref{confunLem}, $(I - C(0) ^* ) f = g$ is such that $Cg \in \scr{H} ^L (C)$.  There are two cases: If $\H \subseteq \J$ then $\scr{H} ^L (C) = \scr{H} ^L (B)$. In this case $C g = B P_\H g \in \scr{H} ^L (B)$, and by assumption, $P_\H g = P_\H (I-B(0) ^* ) f \neq 0$. We conclude that $(7)$ does not hold in this case. In the second case $\J \subseteq \H$, and $g = (I - B(0)^*) f \in \H$ is such that $Cg = B g \in \scr{H} ^L (C) = \scr{H} ^L (B) \oplus F^2 _d \otimes (\H \ominus \J)$. Since $B(Z) \in \C ^{n\times n} \otimes \L (\H, \J)$, one can apply Lemma \ref{NCRKHSfact} (as in the proof of Lemma \ref{confunLem}) to show that $Bh \in \scr{H} ^L (B)$, and again $(7)$ does not hold. Hence $(7) \Rightarrow (4)' \Rightarrow (4)$.

Assuming now that $B =[B] = C \in \scr{L} _d (\H)$ is square, item $(6)$ is equivalent to the statement that $V^C$ is a Cuntz (onto) row isometry, and since $V^C$ is unitarily equivalent to $\Pi ^C = \pi _C (L)$ via Cauchy transform, it follows that $(6) \Leftrightarrow (8)$. 
\end{proof}

The proof of $(1) \Leftrightarrow (2)$ is the free and operator-valued extension of the main result of \cite{JMqe}, and the argument is formally analogous. 

\begin{proof}{ ($(1) \Rightarrow (2)$ of Theorem \ref{CEthm})}
Suppose $B \in \scr{L} _d (\H , \J)$ is not column-extreme so that there is a non-zero $A \in \scr{L} _d (\H , \J)$ so that the two-component column $C:= \bpm B \\ A \epm$ is Schur. Without loss of generality, we can assume that $ 0 \neq A _\emptyset = A(0) \in \L (\H, \J)$. The argument is as in \cite[Lemma 5.2]{JMqe}: If $A_\emptyset =0$ choose $\alpha \in \F ^d$ of minimal length so that $A_\alpha \neq 0$, and set: $\wt{A} := (L^\alpha ) ^* \otimes I_\J A$. Then 
$$ \bpm B \\ \wt{A} \epm = \bpm I & 0 \\ 0 & (L _\alpha ) ^* \otimes I_\J \epm \bpm B \\ A \epm, $$ is also Schur and satisfies $\wt{A} _\emptyset = A _\alpha \neq 0$.
The unique contractive Gleason solution, $\mbf{C}$ for $C$ is:
$$ \mbf{C} = \bpm R^* \otimes I _\J , & R^* \otimes I_\J \epm \bpm  B \\ A \epm = \bpm \mbf{B} \\ \mbf{A} \epm, $$ where $\mbf{B}, \mbf{A}$ are the unique contractive Gleason solutions for $B,A$. Observe that 
$$ \mbf{C} ^* \mbf{C} \leq I - C(0) ^* C(0) = I - B(0) ^* B(0) - A(0) ^* A(0) < I - B(0) ^* B(0), $$ since we can assume $A(0) \neq 0$.

Now we apply the argument of \cite[Proposition 5.1]{JMqe}: By Lemma \ref{NCRKHSfact}, since each $\mbf{C} _j h\in \scr{H} ^L (C)$ for any $h \in \H$, where $1 \leq j \leq d$, there is a $t_j >0$ so that as CPNC kernels, 
\ba  (C_j (Z) h) (C_j(W)h) ^*  &= & \bpm (B_j (Z)h) (B_j (W)h)^* & * \\ * & * \epm \nn \\
& \leq &  t_j ^2 K^C (Z,W) = t_j ^2 \bpm K^B (Z,W) & * \\ * & * \epm, \nn \ea and one can take $t_j := \| C_j h \|$.
It follows that for any $h \in \H$, 
\ba \| \mbf{B} h \| ^2 & = &\ip{h}{\mbf{B} ^* \mbf{B} h} \nn \\
& \leq & \sum _{j=1} ^d t_j ^2 \nn \\
& = & \ip{h}{\mbf{C} ^* \mbf{C} h} \nn \\
& \leq & \ip{h}{(I-C(0) ^* C(0)) h } \nn \\
& = & \ip{h}{(I - B(0) ^* B(0) - A(0) ^* A(0) ) h}. \nn \ea This proves that 
$$ \mbf{B} ^* \mbf{B} < I - B(0) ^* B(0), $$ so that $\mbf{B}$ is not extremal. 
\end{proof}

The proof of $(2) \Rightarrow (1)$ will employ the colligation and transfer-function theory of \cite{Ball-Fock}. We briefly recall the pertinent facts: A \emph{colligation} is any contractive linear map: 
$$ U := \bpm A_1 & B_1 \\ \vdots & \vdots \\ A_d & B_d \\ C & D \epm =: \bpm A & B \\ C & D \epm : \bpm \K \\ \H \epm \rightarrow \bpm \K \otimes \C ^d \\ \J \epm. $$ 
The \emph{transfer-function} of the contractive colligation $U$ is the function $B_U$ defined on the free unit ball by:
$$ B _U (Z) := D + C (I -ZA) ^{-1} ZB \in \C ^{n\times n} \otimes \L (\H , \J); \quad \quad Z \in \B ^d _n, $$ where $ZA := Z_1 A_1 + ... Z_d A_d.$ The theory of \cite{Ball-Fock} shows that a free function $B$ on $\B ^d _\N$ belongs to the left free Schur class if and only if $B=B_U$ is the transfer function of some contractive colligation $U$ (see \cite[Theorem 3.1]{Ball-Fock}). Moreover, any $B \in \scr{L} _d (\H, \J)$ always has the (left) \emph{canonical deBranges-Rovnyak colligation}
$$ U_{dBR} := \bpm A_{dBR} & B_{dBR} \\ C_{dBR} & D _{dBR} \epm $$ constructed by choosing $\K := \scr{H} ^L (B)$ and 
$$ A_{dBR} := R^* | _{\scr{H} ^L (B)}, \quad B_{dBR}:= R^* B, \quad C_{dBR}:= (K_0 ^B ) ^*, \ \mbox{and} \ D_{dBR}:= B(0), $$ (so that $B$ is recovered as the transfer function of this colligation) see \cite[Theorem 4.3]{Ball-Fock}.
Similarly, if $A \in \scr{R} _d (\H, \J)$ then $A(R) = M^R _{A^\dag (Z)}$ is such that the free holomorphic function $A ^\dag (Z)$ can be recovered as the transfer function of the (right) canonical deBranges-Rovnyak colligation given by choosing $\K := \scr{H} ^R (A)$, and 
$$ A_{dBR} := L^* | _{\scr{H} ^R (A)}, \quad B_{dBR} := L^* A, \quad C_{dBR} := (K_0 ^A ) ^*, \ \mbox{and} \ D_{dBR} := A(0). $$ 

\begin{proof}{ (of $(2) \Rightarrow (1)$)}
We give the proof for right free multipliers $A \in \scr{R} _d (\H , \J)$. Assuming $\mbf{A}$ is not extremal, we choose $ 0 \leq a_\emptyset \in \L(\H)$ satisfying:
$$ a_\emptyset ^2 = I - A(0) ^* A(0) - \mbf{A} ^* \mbf{A}. $$
As in the proof of Proposition \ref{GSforHBandB} one can calculate that,
\ba & & \ip{X^*X K^A \{Z,y,v \}}{K^A \wxu  }    \\
& = & \ipcn{y}{K^A (Z,W)[vu^*]  x} - \ipcn{y}{Z \mbf{A} (Z) v}\ipcm{u}{A^\dag(W) ^*x} \nn \\
& & -\ipcn{A^\dag(Z) ^* y}{v}\ipcm{W\mbf{A} (W) u}{x} + \ipcn{y}{A^\dag (Z) \left( vu^* \otimes \mbf{A} ^* \mbf{A} \right) A^\dag(W) ^* x}. \nn \ea 
In the above $K^A \zyv$ is a bounded linear map from $\J$ into $\scr{H} ^R (A)$, so that the above inner product is $\L (\J )-$valued (we have omitted vectors to simplify the notation). Applying the definition of $a_\emptyset ^2$, the above becomes:
\ba 
&= & \ip{K^A \zyv}{K^A \wxu} - \ipcn{y}{(vu^* - A^\dag(Z) v u^* A^\dag (W) ^*) x} \nn \\
& & - \ipcn{y}{(A^\dag(Z) - A^\dag (0_n)) vu^* A^\dag (W)^* x} - \ipcn{y}{A^\dag(Z) vu^* (A^\dag(W) ^* - A^\dag (0_m) ^* ) x} \nn \\
& & + \ipcn{y}{A^\dag (Z) \left( vu^* \otimes (I - A(0) ^* A(0) - a_\emptyset ^2 ) \right) A^\dag (W) ^* x } \nn \\
& = & \ip{K^A \zyv}{K^A \wxu} - \ipcn{y}{A^\dag (Z) (vu^* \otimes a_\emptyset ^2) A^\dag (W) ^* x} \nn \\
&  & - \ipcn{y}{vu^* x} + \ipcn{y}{A(0_n) vu^*A^\dag (W) ^* x}  \label{line1} \\
& & +\ipcn{y}{A^\dag (Z) vu^* A(0_m) ^* x} - \ipcn{y}{A^\dag (Z) (vu^* \otimes A(0) ^* A(0) ) A^\dag (W) ^* x}. \label{line2} \ea
On the other hand, one can calculate that (up to a change of sign) line (\ref{line1}) $+$ line (\ref{line2}) in the above are equal to:
$$ \ip{K_0 ^A (K_0 ^A) ^* K^A \zyv}{K^A \wxu}, $$ and it follows that 
\ba & & \ip{X^*X K^A \zyv}{K^A \wxu} =  \ip{(I-K_0 ^A (K_0 ^A) ^*) K^A \zyv}{K^A \wxu} \label{magic} \\
& & - \ipcn{y}{A^\dag (Z) (vu^* \otimes a_\emptyset ^2) A^\dag (W) ^* x}. \nn  \ea
If we define the $\L (\J) -$valued CPNC kernel:
$$ G^A (Z,W) [P] := A^\dag (Z) [\cdot] \otimes a_\emptyset ^2 A^\dag (W) ^* , $$ then Equation (\ref{magic}) implies that $G^A \leq K^A$ as CPNC kernels so that, by Lemma \ref{NCRKHSfact}, 
$$ A a _\emptyset  : \H \rightarrow \scr{H} ^R (A), $$ (where $(Aa_\emptyset) (Z) = A^\dag (Z) a_\emptyset$). Moreover, Equation (\ref{magic}) further implies that 
\be I -X ^* X = K_0 ^A (K_0 ^A ) ^* + A a _\emptyset  (A a_\emptyset  ) ^*. \label{exactGS} \ee This is the appropriate analogue of the formula from \cite[Proposition 3.2]{JMqe}. To complete the proof, we apply the transfer function theory of \cite{Ball-Fock}. We define:
$$ U := \bpm X & \mbf{A} \\ (K_0 ^A) ^* & A(0) \\ -(A a_\emptyset  ) ^* & a _\emptyset \epm : \bpm \scr{H} ^R (A) \\ \H \epm \rightarrow \bpm \scr{H} ^R (A) \otimes \C ^d \\ \J \otimes \C ^2 \epm. $$
The top $2\times 2$ block of $U$ is the canonical deBranges-Rovnyak colligation with transfer function equal to $A$. It follows that if the above $U$ is contractive, then its transfer function will have the form: $$ \bpm A \\ a \epm \in \scr{R} _d (\H , \J \otimes \C ^2 ), $$ for some non-zero $a \in \scr{R} _d (\H , \J)$. We will prove that, in fact, $U$ is an isometry:
\be U^* U = \bpm X^* X + K_0 ^A (K_0 ^A ) ^* + A a_\emptyset  (A a_\emptyset  ) ^* & X^* \mbf{A} + K_0 ^A A(0) - A a _\emptyset ^2 \\ 
* & \mbf{A} ^* \mbf{A} + A(0) ^* A(0) + a_\emptyset ^2 \epm. \ee 
By previous formulas the diagonal entries are equal to $I _{\scr{H} ^R (A)}$ and $I _\H$, respectively, and it remains to show that the top right (and hence also the bottom left) component vanishes. 
This can be verified as follows:
\ba & & \ip{K^A \zyv f }{X^* \mbf{A} h}  =  \ip{K^A \{Z , Z^* y , v \} f - \mbf{A} \left( v \otimes I_\H , A^\dag (Z) ^* y \otimes f \right) }{\mbf{A} h} \nn \\
& = & \left( y \otimes f , (A^\dag (Z) - A(0 _n ) ) v\otimes h \right) - \left( (v \otimes I_\H , A^\dag(Z) ^* y \otimes f ), (I - A(0) ^* A(0) - a_\emptyset ^2 ) h \right) \nn \\
& = & \left( y \otimes f , \left( A^\dag (Z) - A(0_n) - A^\dag (Z) + A^\dag (Z)  \otimes (A(0) ^* A(0) + a_\emptyset ^2)   \right) v\otimes h \right). \nn \ea 
On the other hand, 
\ba & & \ip{K^A \zyv f }{K_0 ^A A(0) h - A a_\emptyset ^2 h } \\
& =& \left( y \otimes f , v \otimes A(0) h \right) - \left( y \otimes f , A^\dag (Z) v \otimes A(0) ^* A(0) h \right) - \left( y \otimes f , A^\dag (Z) v \otimes a_\emptyset ^2 h \right). \nn \ea 
Adding these expressions together gives $0$, which proves the off-diagonal component vanishes.
\end{proof}

\begin{cor} \label{extremecor}
    If $B \in \scr{L} _d (\H , \J)$ is column extreme, then it is an extreme point.
\end{cor}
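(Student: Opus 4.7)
The plan is to prove the contrapositive: if $B$ is not an extreme point of $\scr{L}_d(\H,\J)$, then $B$ is not column extreme. Suppose $B$ is not extreme, so there exists a nonzero $A \in \scr{L}_d(\H,\J)$ with both $B+A$ and $B-A$ contractive multipliers, i.e. $B\pm A \in \scr{L}_d(\H,\J)$. I want to show that the column $\begin{pmatrix} B \\ A \end{pmatrix}$ is also a contractive (left) multiplier between the appropriate vector-valued free Hardy spaces, which immediately contradicts column extremity.

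The main step is the following operator inequality, which is essentially a parallelogram identity applied to multiplication operators. Since $B\pm A$ are contractive, we have
\begin{equation*}
M^{L*}_{B+A} M^L_{B+A} \leq I \quad \text{and} \quad M^{L*}_{B-A} M^L_{B-A} \leq I.
\end{equation*}
Adding these and using $M^L_{B\pm A} = M^L_B \pm M^L_A$, the cross terms cancel and we obtain
\begin{equation*}
2 M^{L*}_B M^L_B + 2 M^{L*}_A M^L_A \leq 2 I, \quad \text{i.e.} \quad M^{L*}_A M^L_A \leq I - M^{L*}_B M^L_B.
\end{equation*}
Equivalently, for every $f \in H^2(\B^d_\N)\otimes \H$,
\begin{equation*}
\|M^L_B f\|^2 + \|M^L_A f\|^2 = \tfrac{1}{2}\bigl( \|M^L_{B+A} f\|^2 + \|M^L_{B-A} f\|^2 \bigr) \leq \|f\|^2,
\end{equation*}
which says precisely that $\begin{pmatrix} B \\ A \end{pmatrix}$ multiplies $H^2(\B^d_\N)\otimes\H$ contractively into $H^2(\B^d_\N)\otimes (\J\oplus \J)$. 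Hence $\begin{pmatrix} B \\ A \end{pmatrix} \in \scr{L}_d(\H, \J\otimes \C^2)$.

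Since $A \neq 0$, this exhibits a nonzero completion of $B$ as a contractive column, so $B$ fails the defining property of column extremity. This contradicts the hypothesis, completing the proof. There is no real obstacle here: the argument reduces the corollary to a direct operator-theoretic manipulation (the parallelogram law for multiplier norms) together with the reformulation of column extremity given in the remark following the definition of CE. Note that the converse implication, that every extreme point is column extreme, is explicitly noted in the text as an open problem and is not addressed by this argument.
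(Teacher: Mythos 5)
Your proposal is correct and is essentially identical to the paper's own proof: both argue by contraposition, averaging the contractivity inequalities for $M^L_{B\pm A}$ to conclude $(M^L_A)^*M^L_A + (M^L_B)^*M^L_B \leq I$, i.e.\ that the column $\bpm B \\ A \epm$ is Schur class. No issues.
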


\begin{proof}
This is the same contrapositive proof as in \cite[Corollary 1.2]{JMqe}: If $B$ is not extreme then there is a non-zero $A \in \scr{L} _d (\H, \J)$ so that both $B \pm A$ are Schur class, which implies:
$$ (M^L _{B-A} ) ^* M_{B-A} ^L  \leq I, \quad \mbox{and} \quad (M^L _{B+A} ) ^* M_{B+A} ^L  \leq I. $$
Averaging these inequalities gives:
$$ (M^L _A ) ^* M^L _A + (M_B ^L ) ^* M_B ^L \leq I, \ \mbox{\emph{i.e.},} \quad \bpm B \\ A \epm \in \scr{L} _d (\H , \J \otimes \C ^2), $$ so that $B$ is not column-extreme.
\end{proof}

The next two corollaries were established in the proof of $(2) \Rightarrow (1)$ of Theorem \ref{CEthm} above:
\begin{cor} \label{DomAhat}
Given $A \in \scr{R} _d (\H , \J)$, define $a_\emptyset \in \L (\H) ^+$ by the formula:
$$ a _\emptyset ^2 := I - A(0) ^* A(0) - \mbf{A} ^* \mbf{A} \geq 0. $$ Then for any $h \in \H$,
$A a_\emptyset h \in \scr{H} ^R (A)$, so that $A a _\emptyset : \H \rightarrow \scr{H} ^R (A)$. 
\end{cor}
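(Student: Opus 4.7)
The plan is to extract this statement directly from the calculation already carried out in the proof of $(2) \Rightarrow (1)$ in Theorem \ref{CEthm}; the corollary really is a byproduct of that argument rather than a fresh result.

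First I would observe that $a_\emptyset$ is well-defined: since $\mbf{A}$ is the unique contractive Gleason solution for $A$ (Theorem \ref{uniqueGSthm}), we have $\mbf{A}^*\mbf{A} \leq I - A(0)^*A(0)$, so $I - A(0)^*A(0) - \mbf{A}^*\mbf{A} \geq 0$ and hence has a unique positive square root $a_\emptyset \in \L(\H)^+$.

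Next I would recall the inner product computation from the $(2) \Rightarrow (1)$ argument. Starting from the formula in Proposition \ref{GSforHBandB} for $X$ acting on kernel vectors and expanding $\ip{X^* X K^A\zyv}{K^A\wxu}$ term by term using the defining relation $a_\emptyset^2 = I - A(0)^*A(0) - \mbf{A}^*\mbf{A}$, one arrives at the identity (Equation (\ref{magic}) in the $(2) \Rightarrow (1)$ proof)
\begin{align*}
\ip{X^*X K^A\zyv}{K^A\wxu} &= \ip{(I - K_0^A(K_0^A)^*)K^A\zyv}{K^A\wxu} \\
& \qquad - \ipcn{y}{A^\dag(Z)(vu^* \otimes a_\emptyset^2)A^\dag(W)^*x}.
\end{align*}
Since $X$ is a contractive Gleason solution, $I - X^*X \geq 0$; rearranging, the CPNC kernel
$$ G^A(Z,W)[P] := A^\dag(Z)\bigl(P \otimes a_\emptyset^2\bigr)A^\dag(W)^* $$
satisfies $G^A(Z,W)[\cdot] \leq K^A(Z,W)[\cdot]$ in the sense of completely positive NC kernels.

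Finally I would apply Lemma \ref{NCRKHSfact} to the free NC function $(Aa_\emptyset)(Z) := A^\dag(Z)\, a_\emptyset$ viewed as an $\L(\H,\J)$-valued function on $\B^d_\N$. The inequality $G^A \leq K^A$ is precisely the condition (with $\lambda = 1$) that $Aa_\emptyset h \in \scr{H}^R(A)$ for every $h \in \H$, with $\|Aa_\emptyset h\|_{\scr{H}^R(A)} \leq \|h\|_\H$. This yields the desired bounded linear map $Aa_\emptyset : \H \to \scr{H}^R(A)$. There is no real obstacle here — the substantive work was performed inside the proof of Theorem \ref{CEthm}; the only point requiring a little care is the careful bookkeeping to see that the displayed inner-product identity exactly matches the hypothesis of Lemma \ref{NCRKHSfact}.
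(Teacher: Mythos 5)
Your proposal is correct and follows exactly the route the paper intends: the paper explicitly states that Corollary \ref{DomAhat} "was established in the proof of $(2) \Rightarrow (1)$ of Theorem \ref{CEthm}," and your argument reconstructs precisely that portion — the identity (\ref{magic}), the kernel domination $G^A \leq K^A$, and the application of Lemma \ref{NCRKHSfact}. Your added observation that contractivity of $\mbf{A}$ guarantees $a_\emptyset$ is well defined (so the corollary holds even in the extremal case, where the statement is trivial) is a correct and harmless refinement.
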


Given $A \in \scr{R} _d (\H, \J)$, define $\mc{D} _A \subseteq \H$ as the linear space of all $h \in \H$ such that $Ah \in \scr{H} ^R (A)$, and let $\hat{A} : \mc{D} _A \rightarrow \scr{H} ^R (A)$ be the linear transformation $\hat{A} h := A h \in \scr{H} ^R (A)$. Here, we write $A = A(R)$ so that $(Ah) (Z) = A^\dag (Z) h$. The previous Corollary \ref{DomAhat} shows that $\ran{a_\emptyset} \subseteq \dom{\hat{A}}$.
\begin{cor}
Given $A \in \scr{R} _d (\H, \J)$, we have the identity:
$$ I -X^* X = K_0 ^A (K_0 ^A) ^* + (\hat{A} a_\emptyset )(\hat{A} a_\emptyset ) ^*. $$ 
\end{cor}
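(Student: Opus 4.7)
The identity is essentially formula (\ref{exactGS}) that appeared inside the proof of $(2)\Rightarrow(1)$ of Theorem~\ref{CEthm}, together with a small reinterpretation of the second summand via Corollary~\ref{DomAhat}. My plan is to isolate and reassemble that computation.

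First, I would evaluate $\ip{X^*X\,K^A\zyv}{K^A\wxu}$ on kernel vectors, using the explicit formula for the contractive Gleason solution $X$ supplied by Proposition~\ref{GSforHBandB} (with the unique $\mbf{A}=(L^*\otimes I_\H)A$ from Theorem~\ref{uniqueGSthm}). The expansion produces four $\L(\J)$-valued terms: the pure $K^A(Z,W)[vu^*]$ piece, two cross-terms coming from the $\mbf{A}$-corrections at $Z$ and $W$, and a residual term $\ipcn{y}{A^\dag(Z)(vu^*\otimes \mbf{A}^*\mbf{A})A^\dag(W)^*x}$.

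Next, using the defining identity $a_\emptyset^{\,2}=I-A(0)^*A(0)-\mbf{A}^*\mbf{A}$, I replace $\mbf{A}^*\mbf{A}$ by $I-A(0)^*A(0)-a_\emptyset^{\,2}$. A direct algebraic regrouping, exactly as on lines (\ref{line1})--(\ref{line2}) of the proof of $(2)\Rightarrow(1)$, shows that the $I-A(0)^*A(0)$ portion together with the two cross-terms collects into the sesquilinear form of $K_0^A(K_0^A)^*$ acting on kernel vectors, while the $a_\emptyset^{\,2}$ portion yields the CPNC kernel $G^A(Z,W)[P]=A^\dag(Z)[P]\otimes a_\emptyset^{\,2}\,A^\dag(W)^*$. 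Thus on the span of kernel vectors
\[
\ip{(I-X^*X)\,K^A\zyv}{K^A\wxu} = \ip{K_0^A(K_0^A)^*K^A\zyv}{K^A\wxu} + \ipcn{y}{A^\dag(Z)(vu^*\otimes a_\emptyset^{\,2})A^\dag(W)^*x}.
\]

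Finally, Corollary~\ref{DomAhat} gives $\ran{a_\emptyset}\subseteq \mc{D}_A$, so $A a_\emptyset h=\hat A a_\emptyset h$ lies in $\scr{H}^R(A)$ for every $h\in\H$, and the residual term above is exactly $\ip{(\hat A a_\emptyset)(\hat A a_\emptyset)^*K^A\zyv}{K^A\wxu}$. Since kernel vectors are total in $\scr{H}^R(A)$, the sesquilinear identity extends to the asserted operator identity on all of $\scr{H}^R(A)$. The only non-routine step is the algebraic regrouping in the second paragraph, and this has already been carried out inside the proof of $(2)\Rightarrow(1)$, so I expect the write-up here to amount to little more than a cross-reference together with the identification $A a_\emptyset = \hat A a_\emptyset$.
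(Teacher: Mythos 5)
Your proposal is correct and is essentially the paper's own argument: the paper proves this corollary by pointing back to Equation (\ref{exactGS}) in the proof of $(2)\Rightarrow(1)$ of Theorem \ref{CEthm}, which is exactly the kernel-vector computation you describe, with Corollary \ref{DomAhat} supplying the identification $Aa_\emptyset=\hat{A}a_\emptyset$. Nothing further is needed.
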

\begin{claim}
    The linear transformation $\hat{A}$ is closed.
\end{claim}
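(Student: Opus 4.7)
The plan is to verify closedness directly from the sequential definition. Suppose $h_n \in \mc{D}_A$ with $h_n \to h$ in $\H$ and $\hat{A} h_n \to F$ in $\scr{H}^R (A)$; the goal is to show $h \in \mc{D}_A$ and $\hat{A} h = F$. The idea is that both hypotheses force convergence of function values at every $Z \in \B^d_\N$: convergence in $\scr{H}^R(A)$ forces $(\hat{A} h_n)(Z) \to F(Z)$ via boundedness of point evaluation, while convergence in $\H$ forces $(A h_n)(Z) = A^\dag(Z)(\cdot \otimes h_n) \to A^\dag (Z)(\cdot \otimes h)$ via boundedness of the finite-dimensional operator $A^\dag(Z) \in \C^{n\times n} \otimes \L(\H,\J)$ in its input. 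Since $\hat{A} h_n = A h_n$, the two limits must agree pointwise, forcing $F = Ah$ as free functions on $\B^d_\N$.

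Concretely, I would fix $Z \in \B^d_n$, vectors $y, v \in \C^n$, and $g \in \J$, and pair both sides with the kernel vector $K^A \{Z,y,v\} g \in \scr{H}^R(A)$. On the one hand, continuity of the inner product gives
\begin{equation*}
\ip{K^A \{Z,y,v\}g}{\hat{A} h_n} \longrightarrow \ip{K^A \{Z,y,v\}g}{F} = \ipcn{y \otimes g}{F(Z) v},
\end{equation*}
using the reproducing formula for $\scr{H}^R(A)$. On the other hand, the reproducing formula applied to $\hat{A} h_n = A h_n$ yields
\begin{equation*}
\ip{K^A \{Z,y,v\}g}{A h_n} = \ipcn{y \otimes g}{A^\dag(Z)(v \otimes h_n)} \longrightarrow \ipcn{y \otimes g}{A^\dag(Z)(v \otimes h)},
\end{equation*}
since $A^\dag(Z)$ is a bounded linear operator between finite direct sums of copies of $\H$ and $\J$. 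Equating the two limits for arbitrary $y, v, g$ gives $F(Z) v = A^\dag(Z)(v \otimes h) = (Ah)(Z) v$ for all $Z$ and $v$, hence $F = Ah$ as free functions. Because $F \in \scr{H}^R(A)$ by hypothesis, this means $Ah \in \scr{H}^R(A)$, so $h \in \mc{D}_A$ and $\hat{A} h = F$, which is closedness.

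I do not anticipate a real obstacle here: the argument only uses two standard facts — boundedness of point evaluation on an NC-RKHS (so that convergence in $\scr{H}^R(A)$ implies pointwise convergence of the free function values) and the fact that, at each fixed $Z$, the map $h \mapsto A^\dag(Z)(v \otimes h)$ is a bounded linear map. The only mild subtlety is keeping straight that $Ah \in \scr{H}^R(A)$ is a $\J$-valued free function whose value at $Z \in \B^d_n$ lies in $\C^{n\times n} \otimes \J$ and is given by $A^\dag(Z)(I_n \otimes h)$; once this notational bookkeeping is in place, the rest is formal.
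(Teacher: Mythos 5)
Your proof is correct and follows essentially the same route as the paper's: both verify the sequential definition of closedness by noting that convergence in $\scr{H}^R(A)$ forces pointwise convergence via bounded point evaluations, while $h_n \to h$ forces $A^\dag(Z)(v\otimes h_n) \to A^\dag(Z)(v\otimes h)$ at each fixed $Z$, so the two limits agree and $F = Ah$. The paper simply asserts this as "easy to see," whereas you have supplied the kernel-vector pairing that justifies it.
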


\begin{proof}
    Suppose $(h_n) \subset \mc{D} _A$, $h_n \rightarrow h \in \H$, and $Ah_n \rightarrow F \in \scr{H} ^R (A)$. It is easy to see that for any $Z \in \B ^d _n$, 
$$ F(Z) =  \lim _{n} A^\dag (Z) h_n = A^\dag (Z) h, $$ so that $F=Ah$, proving that $h \in \mc{D} _A$ and $\hat{A} h = F$. 
\end{proof}

Since $\hat{A}$ is a closed linear transformation, it follows by general facts that $\hat{A} ^* \hat{A}$ is densely-defined in the Hilbert space $\ov{\mc{D} _A}$ and positive semi-definite on a domain $\dom{\hat{A} ^* \hat{A}} \subseteq \mc{D} _A$ (which is a dense in $\mc{D} _A$).

\begin{prop} \label{domA}
    Given any $h \in \dom{\hat{A} ^* \hat{A}}$, 
    $$ a_\emptyset ^2 (I + \hat{A} ^* \hat{A}) h = h, $$ and $\dom{\hat{A}} = \mc{D} _A = \ran{a_\emptyset}$. Viewing $\hat{A}$ as a closed linear transformation from $\mc{D} _A \subseteq \ov{\mc{D}_A} \rightarrow \H$,
$$ a_\emptyset ^2 = (I _{\ov{\mc{D} _A}} + \hat{A} ^* \hat{A} ) ^{-1}. $$ 
\end{prop}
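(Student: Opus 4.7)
My plan is to show that $a_\emptyset^2$, restricted to $\ov{\mc{D}_A}$, equals the bounded inverse $(I+\hat{A}^*\hat{A})^{-1}$ associated with the closed densely-defined operator $\hat{A}:\mc{D}_A\subseteq\ov{\mc{D}_A}\to\scr{H}^R(A)$. Once this identity is in hand, both displayed formulas of the proposition follow: the equation $a_\emptyset^2(I+\hat{A}^*\hat{A})h=h$ for $h\in\dom{\hat{A}^*\hat{A}}$ is just $(I+\hat{A}^*\hat{A})^{-1}(I+\hat{A}^*\hat{A})=I$, and $\dom{\hat{A}}=\ran{a_\emptyset}$ follows from the standard theorem $\dom{T}=\ran{(I+T^*T)^{-1/2}}$ for closed densely-defined $T$ (here $(a_\emptyset^2)^{1/2}=a_\emptyset$ since $a_\emptyset\geq 0$, and $a_\emptyset$ vanishes on $(\ov{\mc{D}_A})^\perp$ because $\ran{a_\emptyset}\subseteq\mc{D}_A$).

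The main step is to prove that for every $k\in\ov{\mc{D}_A}$, one has $a_\emptyset^2 k\in\dom{\hat{A}^*\hat{A}}$ and $(I+\hat{A}^*\hat{A})a_\emptyset^2 k = k$. Membership $a_\emptyset^2 k\in\mc{D}_A$ is immediate from Corollary \ref{DomAhat}, as $a_\emptyset^2 k=a_\emptyset(a_\emptyset k)\in\ran{a_\emptyset}\subseteq\mc{D}_A$, so $\hat{A}(a_\emptyset^2 k)=A a_\emptyset^2 k$ is defined; what remains is to check that $A a_\emptyset^2 k\in\dom{\hat{A}^*}$ with $\hat{A}^*(A a_\emptyset^2 k)=k-a_\emptyset^2 k$. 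The key input is the off-diagonal operator identity
$$ A a_\emptyset^2 = X^*\mbf{A} + K_0^A\, A(0) $$
as bounded maps $\H\to\scr{H}^R(A)$. This is the $(1,2)$-entry of the factorization $I-U^*U=\bigl(\begin{smallmatrix}T\\ -a_\emptyset\end{smallmatrix}\bigr)\bigl(\begin{smallmatrix}T^* & -a_\emptyset\end{smallmatrix}\bigr)$ of the contraction defect of the canonical right deBranges-Rovnyak colligation $U=\bigl(\begin{smallmatrix}X&\mbf{A}\\ (K_0^A)^*&A(0)\end{smallmatrix}\bigr)$, where $T=\hat{A} a_\emptyset$. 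Its diagonal entries are already in place: $TT^*=I-X^*X-K_0^A(K_0^A)^*$ is the content of the preceding corollary, and $a_\emptyset^2=I-A(0)^*A(0)-\mbf{A}^*\mbf{A}$ by definition. To pin down the off-diagonal piece I will test the proposed equality on kernel vectors, using the formula $XK^A\{W,x,u\}=K^A\{W,W^*x,u\}-\mbf{A}\ipcm{u}{A^\dag(W)^*x}$ from Proposition \ref{GSforHBandB} together with the Gleason relation $Z\mbf{A}(Z)=A^\dag(Z)-A(0_n)I_n$.

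Assuming the off-diagonal identity, the conclusion is a brief pairing argument: for any $j\in\mc{D}_A$ and any $k\in\H$, using $XAj=\mbf{A} j$, $(K_0^A)^*Aj=A(0)j$, and $\mbf{A}^*\mbf{A}+A(0)^*A(0)=I-a_\emptyset^2$,
$$ \ip{A a_\emptyset^2 k}{Aj}_{\scr{H}^R(A)} = \ip{\mbf{A} k}{\mbf{A} j}+\ip{A(0)k}{A(0)j}=\ip{(I-a_\emptyset^2)k}{j}_\H. $$
The resulting bound in $\|j\|_\H$ shows $A a_\emptyset^2 k\in\dom{\hat{A}^*}$, and when $k\in\ov{\mc{D}_A}$ the vector $(I-a_\emptyset^2)k$ already lies in $\ov{\mc{D}_A}$, so $\hat{A}^*(A a_\emptyset^2 k)=(I-a_\emptyset^2)k$ and hence $(I+\hat{A}^*\hat{A})a_\emptyset^2 k=k$. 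The main obstacle is establishing the off-diagonal identity in the previous paragraph: it does not follow automatically from positivity of $I-U^*U$, and is the most technical ingredient of the argument.
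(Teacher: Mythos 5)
Your argument is correct, and it reaches the conclusion by a genuinely different route than the paper. The paper's proof first establishes the left-inverse identity $a_\emptyset^2(I+\hat{A}^*\hat{A})h=h$ on $\dom{\hat{A}^*\hat{A}}$ by computing $(I-X^*X)Ah$ in two ways --- once from the kernel-vector action of $X$ on elements of $\scr{H}^R(A)$, and once from the diagonal defect identity (\ref{exactGS}) --- and only afterwards upgrades to the right-inverse identity $(I+\hat{A}^*\hat{A})a_\emptyset^2h=h$ via a duality argument that invokes $\dom{\hat{A}^*\hat{A}}$ being a core for $\hat{A}$. You instead go straight for the right-inverse identity on all of $\ov{\mc{D}_A}$ by pairing $Aa_\emptyset^2k$ against $\hat{A}j$ for every $j\in\mc{D}_A$ using the off-diagonal identity $Aa_\emptyset^2=X^*\mbf{A}+K_0^A A(0)$; since your sesquilinear identity holds against all of $\dom{\hat{A}}$ rather than only a core, no core argument is needed, and the left-inverse identity then comes for free. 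The off-diagonal identity you flag as the main technical obstacle is in fact already available in the paper: it is precisely the vanishing of the $(1,2)$-entry of $U^*U$ for the augmented colligation $U$ in the proof of $(2)\Rightarrow(1)$ of Theorem \ref{CEthm}, established there by the same kernel-vector computation you sketch, so you may cite that step rather than redo it. Your bookkeeping around the reduction to $\ov{\mc{D}_A}$ --- that $(I-a_\emptyset^2)k$ remains in $\ov{\mc{D}_A}$ for $k\in\ov{\mc{D}_A}$, and that $a_\emptyset$ annihilates $\ov{\mc{D}_A}^\perp$ so that $(a_\emptyset^2|_{\ov{\mc{D}_A}})^{1/2}=a_\emptyset|_{\ov{\mc{D}_A}}$ --- is exactly right, and the paper glosses over this point. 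What the paper's route buys is that it needs only the diagonal entries of the defect factorization; what yours buys is a cleaner functional-analytic endgame in which both displayed formulas and the domain identification drop out of a single inversion statement.
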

This is a free and operator-valued analogue of \cite[Lemma 3.3]{JMqe}.
\begin{proof}
    Given $h \in \dom{\hat{A} ^* \hat{A}} \subseteq \mc{D} _A$, we calculate $(I-X^* X) Ah \in \scr{H} ^R (A)$ in two different ways:
First, since $Ah \in \scr{H} ^R (A)$, $XAh =\mbf{A} h$, and 
\ba & & \ip{K^A \zyv g }{(I-X^*X)Ah}  = \left( y \otimes g , A^\dag (Z) v \otimes h \right) \nn \\
& & - \ip{K^A \{ Z , Z^* y, v \} g }{\mbf{A} h} + \ip{\mbf{A} \left( v \otimes I_\H , A^\dag (Z) ^* y \otimes g \right)}{\mbf{A} h} \nn \\
& = & \left( y \otimes g , A(0 _n ) v \otimes h \right) + \left( y \otimes g , A^\dag (Z) v\otimes \mbf{A} ^* \mbf{A} h \right). \nn \ea 
Equivalently, for any $Z \in \B ^d _n$,
$$ \left( (I-X^*X) A h \right) (Z) = A(0) (I_n \otimes h) + A^\dag (Z) (I_n \otimes \mbf{A} ^* \mbf{A} h ). $$ 
Secondly, apply the identity (\ref{exactGS}) to obtain:
\ba & & \left( K_0 ^A (K_0 ^A ) ^* Ah + A a_\emptyset (Aa_\emptyset ) ^* Ah \right) (Z) \nn \\
& = & K^A (Z , 0) A(0) h + A^\dag (Z) (I_n \otimes a_\emptyset ^2 \hat{A} ^* \hat{A} h) \nn \\
& = & A(0_n) (I \otimes h)  A^\dag (Z) \left( I_n \otimes a_\emptyset ^2 \hat{A} ^* \hat{A} h  - I_n \otimes A(0) ^* A(0) h \right). \nn \ea 
Equating these two expressions yields:
$$ A^\dag (Z) \left( I_n \otimes a_\emptyset ^2 \hat{A} ^* \hat{A} h - I_n \otimes A(0) ^* A(0) h \right) = A^\dag (Z) (I_n \otimes \mbf{A} ^* \mbf{A} h). $$ 
Using that $\mbf{A} ^* \mbf{A} = I - A(0) ^* A(0) - a_\emptyset ^2$ yields:
$$ a_\emptyset ^2 \hat{A} ^* \hat{A} h - A(0) ^* A(0) h = (I - A(0) ^* A(0) - a_\emptyset ^2)h, $$ and solving for $h$ gives:
$$ a_\emptyset ^2 (I + \hat{A} ^* \hat{A} ) h = h. $$ On the other hand, Corollary \ref{DomAhat} shows that $\ran{a_\emptyset} \subseteq \dom{\hat{A}} = \mc{D} _A$.  For any $g \in \dom{\hat{A}^* \hat{A}}$ and any $h \in \H$,
\ba \ip{g}{h} &=& \ip{ (I + \hat{A}^* \hat{A}) g}{a_\emptyset ^2 h} \nn \\
& = & \ip{g}{a_\emptyset ^2 h} + \ip{\hat{A} ^* \hat{A} g}{a_\emptyset ^2 h} \nn \\
& = & \ip{g}{a_\emptyset ^2 h} + \ip{\hat{A} g}{\hat{A} a_\emptyset h}. \nn \ea 
This can be re-arranged as:
$$ \ip{\hat{A}g}{\hat{A} a_\emptyset ^2 h} = \ip{g}{(I-a_\emptyset ^2 ) h}, $$ for any
$g \in \dom{\hat{A} ^* \hat{A}}$ and $h \in \H$.
Since $\hat{A}$ is a closed linear transformation, $\dom{\hat{A}^* \hat{A}}$ is a core for $\hat{A}$, and the above then implies that $\hat{A} a_\emptyset ^2 h \in \dom{\hat{A} ^*}$, and that 
$$ \hat{A}^* \hat{A} a_\emptyset ^2 h = (I - a_\emptyset ^2 )  h, $$ which is equivalent to
$$ (I +\hat{A} ^* \hat{A} ) a_\emptyset ^2 h = h, $$ and we conclude that $a_\emptyset ^2 = (I + \hat{A} ^* \hat{A} ) ^{-1}$.  This proves that $\ran{a_\emptyset ^2} = \dom{\hat{A}^* \hat{A}}$, and by the polar decomposition for closed operators, $\ran{a_\emptyset} = \dom{\sqrt{I +\hat{A}^* \hat{A}}} = \dom{\sqrt{\hat{A}^* \hat{A}}} = \dom{\hat{A}}$.
\end{proof}
\begin{cor}\label{Linv}
    Given $A \in \scr{R} _d (\H , \J)$, and $F \in \scr{H} ^R (A)$, we have that $L_j F \in \scr{H} ^R (A)$  if and only if $\mbf{A} _j  ^* F \in \dom{\hat{A}}$.
\end{cor}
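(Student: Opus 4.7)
The plan is to exploit the operator identity
\[ X^* \mbf{A} = \hat{A} a_\emptyset^2 - K_0^A A(0), \]
which emerged as the vanishing off-diagonal block of $U^*U = I$ for the isometric augmented colligation in the proof of Theorem~\ref{CEthm}$(2) \Rightarrow (1)$, together with Proposition~\ref{domA}'s characterization $\dom{\hat{A}} = \ran{a_\emptyset}$. Taking adjoints gives $\mbf{A}^* X = (\hat{A} a_\emptyset^2)^* - A(0)^*(K_0^A)^*$; factoring $(\hat{A} a_\emptyset^2)^* = a_\emptyset (\hat{A} a_\emptyset)^*$ shows that the range of this operator lies inside $\ran{a_\emptyset} = \dom{\hat{A}}$.

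For the forward direction, I would first note that $L_k^* L_j = \delta_{k,j} I$ forces $X(L_j F) = e_j \otimes F$ in $\scr{H}^R(A) \otimes \C^d$ when $L_j F \in \scr{H}^R(A)$, and that $(L_j F)(0) = 0$ since $L_j F$ has no constant term. Substituting into the adjoint identity yields
\[ \mbf{A}_j^* F = \mbf{A}^*(e_j \otimes F) = \mbf{A}^* X(L_j F) = (\hat{A} a_\emptyset^2)^* (L_j F), \]
which lies in $\dom{\hat{A}}$ by the range observation above. For the reverse, I would set $h := \mbf{A}_j^* F \in \dom{\hat{A}} = \ran{a_\emptyset}$ and $k := a_\emptyset^{-1} h \in \ov{\mc{D}_A}$, well defined since Proposition~\ref{domA} makes $a_\emptyset$ injective on $\ov{\mc{D}_A}$. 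I would then produce $G \in \scr{H}^R(A)$ via Riesz as the representative of the bounded linear functional
\[ \ell(H) := \ip{F}{X_j H}_{\scr{H}^R(A)} + \ip{k}{(\hat{A} a_\emptyset)^* H}_\H, \]
and identify $G$ with $L_j F$ as a free function on $\B ^d _\N$.

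The hard part will be verifying $G = L_j F$ on all of $\B ^d _\N$, which I plan to do by evaluating $\ell$ against kernel vectors $H = K^A\{W,x,u\}g$. Proposition~\ref{GSforHBandB} expands $X_j K^A\{W,x,u\}g$, while $(\hat{A} a_\emptyset)^* K^A\{W,x,u\}g$ can be computed from the reproducing property applied to $\hat{A} a_\emptyset h = A(a_\emptyset h) \in \scr{H}^R(A)$. The cancellations making $\ell(K^A\{W,x,u\}g)$ equal to $(x \otimes g, W_j F(W) u)_{\C^m \otimes \J}$---the value that the reproducing formula would assign to $\ip{G}{K^A\{W,x,u\}g}$ if $G$ were $L_j F$---depend on the identity $a_\emptyset k = \mbf{A}_j^* F$ and on $(\hat{A} a_\emptyset)^*(\hat{A} a_\emptyset) = I - a_\emptyset^2$ (from the functional calculus $a_\emptyset^2 = (I + \hat{A}^*\hat{A})^{-1}$). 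Once assembled, the reproducing formula forces $G(W) = W_j F(W) = (L_j F)(W)$ for every $W$, so $L_j F \in \scr{H}^R(A)$ as required.
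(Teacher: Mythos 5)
Your proof is correct, and it splits into one half that is genuinely different from the paper's and one half that is the paper's argument in disguise. The paper's entire proof is the single kernel-vector computation
\[
\ip{K^A\zyv g}{X_j^*F} \;=\; \left(y\otimes g,\,Z_jF(Z)v\right) \;-\; \left(y\otimes g,\,A^\dag(Z)v\otimes \mbf{A}_j^*F\right),
\]
i.e.\ the pointwise identity $X_j^*F = L_jF - A\,\mbf{A}_j^*F$; since $X_j^*F$ always lies in $\scr{H}^R(A)$, both implications follow at once from $\mc{D}_A=\ran{a_\emptyset}=\dom{\hat{A}}$ (Proposition~\ref{domA}). Your reverse direction reduces to exactly this: the Riesz representative of your functional $\ell$ is $G=X_j^*F+\hat{A}(a_\emptyset k)=X_j^*F+A\,\mbf{A}_j^*F$, both summands of which are already known elements of $\scr{H}^R(A)$, so the Riesz step is superfluous and the ``hard part'' you defer is precisely the displayed identity. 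Your forward direction, however, is a genuinely different and rather efficient route: taking adjoints in the off-diagonal colligation identity $X^*\mbf{A}=Aa_\emptyset^2-K_0^A A(0)$ and applying it to $X(L_jF)=e_j\otimes F$, with $(K_0^A)^*(L_jF)=(L_jF)(0)=0$, yields $\mbf{A}_j^*F=a_\emptyset\,(Aa_\emptyset)^*(L_jF)\in\ran{a_\emptyset}$ with no kernel computation at all. Two small cautions: the claim that ``the range of this operator lies in $\ran{a_\emptyset}$'' is true only for the summand $(Aa_\emptyset^2)^*$, not for $\mbf{A}^*X$ as a whole --- your argument survives because the other term is annihilated by $(L_jF)(0)=0$; and the colligation identity you invoke is derived in the proof of Theorem~\ref{CEthm} under the heading ``assuming $\mbf{A}$ is not extremal,'' but the computation there is valid for arbitrary $A$ (if $\mbf{A}$ is extremal then $a_\emptyset=0$ and the identity is still correct), so the citation is legitimate.
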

\begin{proof}
The proof is formally identical to that of the commutative analogue of this result in \cite[Corollary 4.5]{Jury2014AC}. Consider, for $g \in \J$,
\ba \ip{K^A \zyv g}{ X_j ^* F } & = & \ip{K^A \{Z , Z_j ^* y , v \} g}{F} - \ip{\mbf{A} _j \left( v\otimes I_\H , A^\dag (Z) ^* y \otimes g\right) }{F} \nn \\
& =& \left( y \otimes g , Z_j F(Z) v \right) - \left( y \otimes g , A^\dag (Z) v\otimes \mbf{A} _j ^* F \right). \nn \ea 
Now $\mbf{A}_j ^* F \in \H$, so by the previous theorem, if $\mbf{A} _j ^* F \in \ran{a_\emptyset}$, then
$A \mbf{A}_j ^* F \in \scr{H} ^R (A)$ so that also $Z_j F(Z) \in \scr{H} ^R (A)$. Conversely if $Z_j F(Z) \in \scr{H} ^R (A)$, then the above formula shows that $A \mbf{A} _j ^* F \in \scr{H} ^R (A)$. 
\end{proof}

\begin{cor}
Given $A \in \scr{R} _d (\H, \J)$, if $Ah \in \scr{H} ^R (A)$ for every $h \in \H$, then $\scr{H} ^R (A)$ is $L$-invariant.
\end{cor}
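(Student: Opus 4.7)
The proof is essentially a direct application of Corollary \ref{Linv}, so my plan is to unpack the hypothesis and invoke that corollary componentwise.

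First I would observe that the hypothesis $Ah \in \scr{H}^R(A)$ for every $h \in \H$ is exactly the statement $\mc{D}_A = \H$, i.e.\ $\dom{\hat{A}} = \H$. (By Proposition \ref{domA} this is equivalent to $\ran{a_\emptyset} = \H$, and since $a_\emptyset \in \L(\H)^+$ is bounded, one could moreover deduce via the closed graph theorem that $\hat{A}$ is bounded, but this boundedness is not needed for the invariance statement.)

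Next I would fix $F \in \scr{H}^R(A)$ and $j \in \{1,\ldots,d\}$ and apply Corollary \ref{Linv}: the shifted element $L_j F$ lies in $\scr{H}^R(A)$ if and only if $\mbf{A}_j^* F \in \dom{\hat{A}}$. Since $\mbf{A}_j^* : \scr{H}^R(A) \rightarrow \H$ maps into $\H$, and $\dom{\hat{A}} = \H$ by hypothesis, the condition $\mbf{A}_j^* F \in \dom{\hat{A}}$ is satisfied automatically. Hence $L_j F \in \scr{H}^R(A)$ for every $j$ and every $F$, which is exactly $L$-invariance of $\scr{H}^R(A)$.

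There is no real obstacle here — the content of the corollary has already done all the work, and the hypothesis has been tailored precisely so that the domain condition in Corollary \ref{Linv} is trivially met. The only thing to be careful about is bookkeeping: $\mbf{A}_j^*$ is defined on all of $\scr{H}^R(A)$ (since $\mbf{A}$ is a bounded contractive Gleason solution from $\H$ into $\scr{H}^R(A) \otimes \C^d$), so no auxiliary density or closability argument is needed before invoking Corollary \ref{Linv}.
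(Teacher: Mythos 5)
Your proof is correct and is exactly the argument the paper intends (the corollary is stated without proof precisely because it follows immediately from Corollary \ref{Linv}): the hypothesis says $\mc{D}_A = \dom{\hat{A}} = \H$, so the condition $\mbf{A}_j^* F \in \dom{\hat{A}}$ holds automatically for every $F \in \scr{H}^R(A)$ and every $j$. Your parenthetical remarks about $\ran{a_\emptyset} = \H$ and boundedness of $\hat{A}$ are accurate but, as you note, not needed.
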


\begin{cor} \label{equivLinv}
    Given any $A \in \scr{R} _d (\H, \J)$, we have that $\scr{H} ^R (A)$ is $L-$invariant if and only if 
$\ran{\mbf{A} ^*} \subseteq \ran{a_\emptyset} = \dom{\hat{A}}$. This happens if and only if $\hat{A}$ is densely-defined and there is a $0<r<1$ such that 
$$ r (I - A(0) ^* A(0) ) \leq (I + \hat{A} ^* \hat{A} ) ^{-1}, $$ or, equivalently, there is a $0<\rho <1$ so that 
$$ \mbf{A} ^* \mbf{A} \leq \rho (I - A(0) ^* A(0) ). $$  
\end{cor}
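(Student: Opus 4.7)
The first equivalence follows immediately by combining Corollary~\ref{Linv} with the identification $\dom{\hat{A}} = \ran{a_\emptyset}$ from Proposition~\ref{domA}. Indeed, $\scr{H}^R(A)$ is $L$-invariant if and only if $L_j F \in \scr{H}^R(A)$ for every $F \in \scr{H}^R(A)$ and every $j$, which by Corollary~\ref{Linv} is equivalent to $\ran{\mbf{A}_j^*} \subseteq \dom{\hat{A}}$ for each $j$, i.e.\ $\ran{\mbf{A}^*} \subseteq \ran{a_\emptyset}$.

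For the second equivalence the key tool is the Douglas range-inclusion theorem, applied to the bounded operators $\mbf{A}^* : \scr{H}^R(A) \otimes \C^d \to \H$ and $a_\emptyset : \H \to \H$: the inclusion $\ran{\mbf{A}^*} \subseteq \ran{a_\emptyset}$ holds if and only if there is a $C > 0$ with $\mbf{A}^* \mbf{A} \leq C\, a_\emptyset^2$. Substituting the defining identity $a_\emptyset^2 = I - A(0)^* A(0) - \mbf{A}^* \mbf{A}$ and rearranging, one obtains the equivalent inequality $\mbf{A}^* \mbf{A} \leq \rho(I - A(0)^* A(0))$ with $\rho = C/(1+C) \in (0,1)$; the converse runs the same algebra backwards with $C = \rho/(1-\rho)$. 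Once $\hat{A}$ is densely-defined, Proposition~\ref{domA} identifies $a_\emptyset^2 = (I + \hat{A}^* \hat{A})^{-1}$ on all of $\H$, and then $\mbf{A}^* \mbf{A} \leq \rho(I - A(0)^* A(0))$ rearranges once more to $(1-\rho)(I - A(0)^* A(0)) \leq (I + \hat{A}^* \hat{A})^{-1}$, which is the stated inequality with $r = 1 - \rho \in (0,1)$.

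The delicate point I expect to be the main obstacle is establishing density of $\dom{\hat{A}} = \ran{a_\emptyset}$ in $\H$ from the range-inclusion hypothesis, which is what is needed to interpret $(I + \hat{A}^* \hat{A})^{-1}$ as a bounded operator on all of $\H$ rather than only on $\ov{\dom{\hat{A}}}$. The plan is: if $h \in \ker{a_\emptyset} = (\ov{\ran{a_\emptyset}})^\perp$, then the inclusion $\ran{\mbf{A}^*} \subseteq \ran{a_\emptyset}$ forces $h \perp \ran{\mbf{A}^*}$ and hence $\mbf{A} h = 0$; combined with $a_\emptyset^2 h = 0$ and the defining identity for $a_\emptyset^2$, this yields $(I - A(0)^* A(0))h = 0$. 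A short argument then using $\mbf{A} = (L^* \otimes I_\J) A$ from Theorem~\ref{uniqueGSthm} together with the Taylor--Taylor expansion of $A^\dag$ should force $Ah = 1 \otimes A(0)h$ to be the constant free function, which places $h$ back inside $\dom{\hat{A}}$ and forces $h = 0$, giving the required density.
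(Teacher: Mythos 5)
Your first two paragraphs follow the paper's proof essentially step for step: Corollary \ref{Linv} combined with Proposition \ref{domA} gives the first equivalence, and the Douglas factorization lemma together with the substitution $a_\emptyset ^2 = I - A(0)^*A(0) - \mbf{A}^*\mbf{A}$ gives the passage between the three inequalities, with the same constants (the paper's $\la ^2 /(1+\la ^2)$ is your $\rho$, and $r = 1-\rho$). The genuine gap is in your density argument. From $h \in \ker{a_\emptyset}$ and the range inclusion you correctly obtain $\mbf{A} h = 0$ and $(I - A(0)^*A(0))h = 0$, and $\mbf{A}h = (L^* \otimes I_\J) Ah = 0$ does force $Ah$ to be the constant function $1 \otimes A(0)h$ in $F^2_d \otimes \J$. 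But the concluding step, that this ``places $h$ back inside $\dom{\hat{A}}$,'' requires the constant function $1 \otimes A(0)h$ to belong to $\scr{H}^R(A)$, and nothing in your hypotheses yields this: constant functions need not lie in a deBranges--Rovnyak space (for inner $A$ they generically do not), and the only relevant element of $\scr{H}^R(A)$ available is $K_0^A (A(0)h) = 1\otimes A(0)h - A\bigl(A(0)^*A(0)h\bigr)$, which under your hypotheses equals $1\otimes A(0)h - Ah = 0$ and therefore carries no information. So the argument does not close.

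The paper handles density differently and more directly: if $a_\emptyset h = 0$ for some $h \neq 0$, then $\mbf{A}^*\mbf{A}h = (I - A(0)^*A(0) - a_\emptyset^2)h = (I - A(0)^*A(0))h$, which is asserted to be nonzero; since the right-hand side of the Douglas inequality $\mbf{A}^*\mbf{A} \leq \la^2 a_\emptyset^2$ annihilates $h$ while the left-hand side does not, the inequality (hence the range inclusion) fails. You should note that this argument --- and any repair of yours --- tacitly assumes that $I - A(0)^*A(0)$ is injective. If $A(0)^*A(0)h = h$ for some $h \neq 0$ (for instance when $A$ is a direct sum of a non-extreme scalar multiplier with a unimodular constant), the range inclusion and $L$-invariance can both hold while $\hat{A}$ fails to be densely defined, so the ``delicate point'' you flagged is genuinely delicate and cannot be resolved by the constancy of $Ah$ alone. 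I recommend replacing your third paragraph with the contradiction against the Douglas inequality, stated under the explicit hypothesis that $A(0)$ is a pure contraction.
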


\begin{proof}
  By Corollary \ref{Linv}, $\scr{H} ^R (A)$ is $L-$invariant if and only if $\ran{\mbf{A} ^*} \subseteq \ran{a_\emptyset}$. By the Douglas Factorization Lemma, this happens if and only if there is a $\la ^2 >0$ so that 
$$ \mbf{A} ^* \mbf{A} ^* \leq \la ^2 a_\emptyset ^2 = \la ^2 (I - \mbf{A} ^* \mbf{A} - A(0) ^* A(0) ). $$
Re-arranging gives
$$\mbf{A} ^* \mbf{A} \leq \frac{\la ^2}{1+\la ^2} (I - A(0) ^* A(0) ). $$ 
If $\dom{\hat{A}} = \ran{a_\emptyset}$ is not dense, then since $\ran{\mbf{A} ^*} = \ran{\mbf{A}^*\mbf{A}}$, we have that there is a non-zero $h \in \H$ so that 
$a_\emptyset h = 0$ (recall that $a_\emptyset \geq 0$ so that $\ran{a_\emptyset} ^\perp = \ker{a_\emptyset}$). It then follows that
$$ \mbf{A} ^* \mbf{A} h = (I - A(0) ^* A(0) - a_\emptyset ^2) h \neq 0, $$ and one cannot have $\mbf{A} ^* \mbf{A} \leq \la ^2 a_\emptyset ^2$ in this case. Hence $\mbf{A} ^* \mbf{A} \leq \la ^2 a_\emptyset ^2$ implies $\dom{\hat{A}}$ has dense range. In this case,
\ba& &  I - a_\emptyset ^2 - A(0) ^* A(0) \leq \la ^2 a_\emptyset ^2 \nn \\
&\Rightarrow  & I - (I + \hat{A} ^* \hat{A} ) ^{-1} - A(0) ^* A(0) \leq \la ^2 (I + \hat{A} ^* \hat{A} ) ^{-1} \nn \\
&\Rightarrow & \frac{1}{1+\la ^2} (I - A(0) ^* A(0) ) \leq (I + \hat{A} ^* \hat{A} ) ^{-1}. \nn \ea 
\end{proof}

\subsection{Clark Intertwining}

Fix $A \in \scr{R} _d (\H)$. As in \cite[Theorem 6.3]{JMfree}, one can verify that the weighted Cauchy transform $\mc{F} ^R : F^2 (\mu _A ) \rightarrow \scr{H} ^R (A)$ intertwines the adjoint of the Stinespring-GNS row isometry $\Pi _A = \pi _A (L)$ with a perturbation of the restricted backward left free shift, $L^* \otimes I_\H | _{\scr{H} ^R (A)} = X$ (and this is a rank-one perturbation in the case where $\H = \C$). Equivalently, the unitary multiplier $\mc{U} _A := M^R _{(I - A ^\dag (Z) )}$ of $\scr{H} ^R _+ (H_A)$ onto $\scr{H} ^R (A)$ intertwines the adjoint of the isometry $V^A$ with a perturbation of $X$: 
\ba  L^* \otimes I_\H | _{\scr{H} ^R (A)} + \mbf{A} (I-A (0) ) ^{-1} (K_0 ^A) ^* & = & \mc{F} ^R \Pi _A ^* (\mc{F} ^R ) ^* \nn \\
& = &  \mc{U} _A (V^A ) ^* \mc{U} _A ^*. \nn \ea
Any $U \in \L (\H )$ yields a different free Clark functional $\mu _{A U^*}$. Since $\scr{H} ^R (A U^*) = \scr{H} ^R (A)$, it follows that every $U \in \L (\H )$ gives a different perturbation of the restricted backward left free shift. In particular, if $A$ is column extreme, each of these perturbations will be a Cuntz unitary (an onto row isometry). In the classical ($d=1$, $\H = \C$) case, one recovers Clark's perturbations of the backward shift.

\bibliography{FreeAC}

\begin{thebibliography}{10}

\bibitem{Ag-Mc}
J.~Agler and J.E. McCarthy.
\newblock Global holomorphic functions in several non-commuting variables.
\newblock {\em arXiv:1305.1636}, 2013.

\bibitem{Ball-Fock}
Joseph~A. Ball, Vladimir Bolotnikov, and Quanlei Fang.
\newblock Schur-class multipliers on the {F}ock space: de {B}ranges-{R}ovnyak
  reproducing kernel spaces and transfer-function realizations.
\newblock In {\em Operator theory, structured matrices, and dilations},
  volume~7 of {\em Theta Ser. Adv. Math.}, pages 85--114. Theta, Bucharest,
  2007.

\bibitem{BMV}
Joseph~A. Ball, Gregory Marx, and Victor Vinnikov.
\newblock Noncommutative reproducing kernel {H}ilbert spaces.
\newblock {\em J. Funct. Anal.}, 271(7):1844--1920, 2016.

\bibitem{Ball2003rkhs}
Joseph~A. Ball and Victor Vinnikov.
\newblock Formal reproducing kernel {H}ilbert spaces: the commutative and
  noncommutative settings.
\newblock In {\em Reproducing kernel spaces and applications}, volume 143 of
  {\em Oper. Theory Adv. Appl.}, pages 77--134. Birkh\"auser, Basel, 2003.

\bibitem{Clark1972}
Douglas~N. Clark.
\newblock One dimensional perturbations of restricted shifts.
\newblock {\em J. Analyse Math.}, 25:169--191, 1972.

\bibitem{DPac}
Kenneth~R. Davidson, Jiankui Li, and David~R. Pitts.
\newblock Absolutely continuous representations and a {K}aplansky density
  theorem for free semigroup algebras.
\newblock {\em J. Funct. Anal.}, 224(1):160--191, 2005.

\bibitem{Hoff}
Kenneth Hoffman.
\newblock {\em Banach spaces of analytic functions}.
\newblock Dover Publications, Inc., New York, 1988.
\newblock Reprint of the 1962 original.

\bibitem{JMfree}
M.~T. Jury and R.~T.~W. Martin.
\newblock Non-commutative {C}lark measures for the free and abelian {T}oeplitz
  algebras.
\newblock {\em J. Math. Anal. Appl.}, 456(2):1062--1100, 2017.

\bibitem{Jury2014AC}
Michael~T. Jury.
\newblock Clark theory in the {D}rury-{A}rveson space.
\newblock {\em J. Funct. Anal.}, 266(6):3855--3893, 2014.

\bibitem{JM-freeSmirnov}
M.T. Jury and R.T.W. Martin.
\newblock The free {S}mirnov classes.
\newblock 2018.
\newblock In Preparation.

\bibitem{JM}
M.T. Jury and R.T.W. Martin.
\newblock Aleksandrov-{C}lark theory for {D}rury-{A}rveson space.
\newblock {\em Integral Equations and Operator Theory}, to appear.
\newblock arXiv:1608.04325.

\bibitem{JMqe}
M.T. Jury and R.T.W. Martin.
\newblock Extremal multipliers of the {D}rury-{A}rveson space.
\newblock {\em Proceedings of the American Mathematical Society}, to appear.
\newblock ar{X}iv:1608.04327.

\bibitem{KVV}
Dmitry~S. Kaliuzhnyi-Verbovetskyi and Victor Vinnikov.
\newblock {\em Foundations of free noncommutative function theory}, volume 199
  of {\em Mathematical Surveys and Monographs}.
\newblock American Mathematical Society, Providence, RI, 2014.

\bibitem{Martin-dB}
R.~T.~W. Martin.
\newblock Representation of simple symmetric operators with deficiency indices
  {$(1,1)$} in de {B}ranges space.
\newblock {\em Complex Anal. Oper. Theory}, 5(2):545--577, 2011.

\bibitem{MM-dBR}
R.T.W. Martin and A.~Ramanantoanina.
\newblock A {G}leason solution model for row contractions.
\newblock ar{X}iv:1612.07972, 2016.

\bibitem{Paulsen-rkhs}
Vern~I. Paulsen and Mrinal Raghupathi.
\newblock {\em An introduction to the theory of reproducing kernel {H}ilbert
  spaces}, volume 152 of {\em Cambridge Studies in Advanced Mathematics}.
\newblock Cambridge University Press, Cambridge, 2016.

\bibitem{PopEntropy}
Gelu Popescu.
\newblock Entropy and multivariable interpolation.
\newblock {\em Mem. Amer. Math. Soc.}, 184(868):vi+83, 2006.

\bibitem{Pop-freeholo}
Gelu Popescu.
\newblock Free holomorphic functions on the unit ball of {$B(\scr H)^n$}.
\newblock {\em J. Funct. Anal.}, 241(1):268--333, 2006.

\bibitem{Pop-freehar}
Gelu Popescu.
\newblock Noncommutative transforms and free pluriharmonic functions.
\newblock {\em Adv. Math.}, 220(3):831--893, 2009.

\bibitem{Pop-freeholo2}
Gelu Popescu.
\newblock Free holomorphic functions on the unit ball of {$B(\scr H)^n$}. {II}.
\newblock {\em J. Funct. Anal.}, 258(5):1513--1578, 2010.

\bibitem{Sarason-dB}
Donald Sarason.
\newblock {\em Sub-{H}ardy {H}ilbert spaces in the unit disk}, volume~10 of
  {\em University of Arkansas Lecture Notes in the Mathematical Sciences}.
\newblock John Wiley \& Sons, Inc., New York, 1994.
\newblock A Wiley-Interscience Publication.

\bibitem{Sarason-ub}
Donald Sarason.
\newblock Unbounded operators commuting with restricted backward shifts.
\newblock {\em Oper. Matrices}, 2(4):583--601, 2008.

\bibitem{Sha2013}
O.~Shalit.
\newblock Operator theory and function theory in {D}rury--{A}rveson space and
  its quotients.
\newblock In {\em Handbook of Operator Theory}, pages 1125--1180. Springer,
  2015.

\bibitem{Suarez}
Daniel Su\'arez.
\newblock Closed commutants of the backward shift operator.
\newblock {\em Pacific J. Math.}, 179(2):371--396, 1997.

\end{thebibliography}

\end{document}